\renewcommand{\fnum@algorithm}{} 
\newtheorem{theorem}{Theorem}
\newtheorem{lemma}[theorem]{Lemma}
\newtheorem{corollary}[theorem]{Corollary}
\newtheorem{remark}[theorem]{Remark}
\newtheorem{problem}{Problem}
\theoremstyle{definition}
\newcommand{\refT}[1]{Theorem~\ref{#1}}
\newcommand{\refC}[1]{Corollary~\ref{#1}}
\newcommand{\refL}[1]{Lemma~\ref{#1}}
\newcommand{\refR}[1]{Remark~\ref{#1}}
\newcommand{\refS}[1]{Section~\ref{#1}}
\newcommand{\refApp}[1]{Appendix~\ref{#1}}
\newcommand\RR{{\mathbb R}}
\newcommand\E{\operatorname{\mathbb E{}}}
\renewcommand\Pr{\operatorname{\mathbb P{}}}
\renewcommand\P{\operatorname{\mathbb P{}}}
\newcommand\Var{\operatorname{Var}}
\newcommand\Bin{\operatorname{Bin}}
\newcommand\set[1]{\ensuremath{\{#1\}}}
\newcommand\bigset[1]{\ensuremath{\bigl\{#1\bigr\}}}
\newcommand\lrset[1]{\ensuremath{\left\{#1\right\}}}
\newcommand\bigpar[1]{\bigl(#1\bigr)}
\newcommand\Bigpar[1]{\Bigl(#1\Bigr)}
\newcommand\biggpar[1]{\biggl(#1\biggr)}
\newcommand\lrpar[1]{\left(#1\right)}
\newcommand\bigsqpar[1]{\bigl[#1\bigr]}
\newcommand\Bigsqpar[1]{\Bigl[#1\Bigr]}
\newcommand\biggsqpar[1]{\biggl[#1\biggr]}
\newcommand\bigcpar[1]{\bigl\{#1\bigr\}}
\newcommand\Bigcpar[1]{\Bigl\{#1\Bigr\}}
\newcommand\abs[1]{|#1|}
\newcommand\bigabs[1]{\bigl|#1\bigr|}
\newcommand\Bigabs[1]{\Bigl|#1\Bigr|}
\def\rompar(#1){\textup(#1\textup)}    % usage: \rompar(...)
\def\xexp(#1){e^{#1}}
\newcommand\ceil[1]{\lceil#1\rceil}
\newcommand\bigceil[1]{\bigl\lceil#1\bigr\rceil}
\newcommand\floor[1]{\lfloor#1\rfloor}
\newcommand\noproof{\qed}
\newcommand{\eps}{\epsilon}
\newcommand{\hc}{\hat{c}}
\def\he{\hat{e}}
\def\hq{\hat{q}}
\def\hy{\hat{y}}
 \def\D{\Delta}
\newcommand{\cE}{\mathcal{E}}
\newcommand{\cF}{\mathcal{F}}
\newcommand{\cG}{\mathcal{G}}
\newcommand{\cH}{\mathcal{H}}
\newcommand{\cI}{\mathcal{I}}
\newcommand{\cK}{\mathcal{K}}
\newcommand{\cN}{\mathcal{N}}
\newcommand{\cP}{\mathcal{P}}
\newcommand{\cS}{\mathcal{S}}
\newcommand{\cU}{\mathcal{U}}
\newcommand{\cB}{\mathcal{B}}
\newcommand{\cC}{\mathcal{C}}
\newcommand{\cX}{\mathcal{R}}
\newcommand{\cc}{\tau} %  r
\newcommand{\balp}{b} %b in the hypergraph chromatic result
\newcommand{\beps}{\sigma} % b_\epsilon
\newcommand{\ca}{\sigma} % a
\newcommand{\Gnp}{G_{n,p}}
\newcommand{\Gnpp}[1]{G_{n,{#1}}}
\newcommand{\deps}{{\eps^2}}
\newcommand{\kr}{r} %unifomity of hypergraph chromatic result
\newcommand{\ci}{\chi'} %[1]  (#1)
\newcommand{\ccn}{\text{cc}} % {\theta_1}
\newcommand{\cct}{\text{cc}_{\Delta}} % {\theta_0}
\newcommand{\ccc}{\text{cc}'} % {\theta'_0}
\newcommand{\cpn}{\text{cp}} % {\tau_1}
\newcommand{\fpra}{\operatorname{dim}_{\mathrm{P}}}
\renewcommand{\Gamma}{\mathcal{K}} 
\renewcommand{\emptyset}{\varnothing} % \text{\O}
\newcommand{\indic}[1]{\mathbbm{1}_{\{{#1}\}}}
\let\OLDthebibliography\thebibliography
\renewcommand\thebibliography[1]{
  \OLDthebibliography{#1}
  \setlength{\parskip}{0pt}
  \setlength{\itemsep}{0pt plus 0.3ex}
}
\title{Prague dimension of random graphs} 
\author{He Guo and Kalen Patton and Lutz Warnke%
\thanks{School of Mathematics, Georgia Institute of Technology, Atlanta GA~30332, USA. 
E-mail: {\tt he.guo@gatech.edu, kpatton33@gatech.edu, warnke@math.gatech.edu}.
Research supported by NSF grant DMS-1703516, NSF~CAREER grant~DMS-1945481, and a Sloan Research Fellowship.}}
\date{November 17, 2020}
\begin{document}
\maketitle

\begin{abstract}
The Prague~dimension of graphs was introduced by Ne\v{s}et\v{r}il, Pultr and R\"{o}dl in the 1970s. 
Proving a conjecture of \mbox{F\"{u}redi} and \mbox{Kantor}, we show that the Prague~dimension of the binomial random~graph is typically of order~$n/\log n$ for constant edge-probabilities. 
The main new proof ingredient is a Pippenger--Spencer~type edge-coloring result for random~hypergraphs with large~uniformities, i.e., edges of size~$O(\log n)$. 
\end{abstract}

\section{Introduction}
Various notions of dimension are important in many areas of mathematics, as a measure for the complexity of objects. 
For graphs, one interesting notion of dimension was introduced by Ne\v{s}et\v{r}il, Pultr and R\"{o}dl~\cite{NR1976,NP1977} in the~1970s.  
The Prague dimension $\fpra(G)$ of a % non-empty 
graph~$G$ (also called product dimension) 
is the minimum number~$d$ such that~$G$ is an induced subgraph of the product of~$d$ complete graphs. 
There are many equivalent definitions of~$\fpra(G)$, see~\cite{W1996,HN2004,AA2020}, 
indicating that this is a natural combinatorial notion of dimension~\cite{LNP1980,HN2004,SB2019}, 
which in fact has appealing connections with efficient representations of graphs~\cite{W1996,K2010,FK2018}.

Despite receiving considerable attention during the last 40~years 
 (including combinatorial~\cite{ER1996,F2000}, information theoretic~{\cite{KO1998,KM2001}} and algebraic~{\cite{NR1976,LNP1980,Alon1986,AA2020} approaches), 
the Prague dimension is still not well understood, i.e., its determination usually remains
a notoriously\footnote{The decision problem of whether~$\fpra(G) \le k$ holds is also known to be NP-complete for~$k \ge 3$, see~\cite{NP1977}.} 
difficult task~\cite{ER1996,FK2018}. 
To gain further insight into the behavior of this intriguing graph parameter, 
it thus is natural and instructive to investigate the Prague dimension of random graphs, 
as initiated by Ne\v{s}et\v{r}il and R\"{o}dl~\cite{NR1983} already in the~1980s. 
%After preliminary work of Akhtar and Cooper~\cite{C2010} from around~2010 for the binomial random graph~$\Gnp$,
%In particular, 
%Notably, 
For the binomial random graph~$\Gnp$, 
F\"{u}redi and Kantor conjectured that % in~2017 that   
with high probability\footnote{As usual, we say that an event holds~\emph{whp} (with~high~probability) if it holds with probability tending to~$1$ as~$n\to \infty$.} (whp) 
the order is $\fpra(\Gnp) =\Theta(n/\log n)$ 
for constant edge-probabilities~$p$, see~\cite[Conjecture~15]{FK2018} and~\cite{OWF2017}.

In this paper we prove the aforementioned F\"{u}redi--Kantor Prague dimension conjecture, % from~2017.
by showing that the binomial random graph whp satisfies~$\fpra(\Gnp) =\Theta(n/\log n)$ for constant edge-probabilities~$p$. 
\begin{theorem}[Prague dimension of random graphs]\label{thm:prague} 
For any fixed edge-probability~$p \in (0,1)$ there are constants~$c,C>0 $ so that the Prague dimension of the random graph~$\Gnp$ satisfies with high probability 
\begin{equation}\label{eq:prague:bounds}
c \frac{n}{\log n} \le \fpra(\Gnp) \le C\frac{n}{\log n} .
\end{equation}
\end{theorem}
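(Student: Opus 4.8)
\medskip
\noindent\textbf{Proof strategy.}

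The plan is to first reformulate the Prague dimension as a clique-covering problem on the complementary random graph. Writing $q := 1-p$, the complement $\overline{\Gnp}$ has the same law as $\Gnpp q$, and $\fpra(\Gnp)\le d$ holds precisely when there are $d$ proper colorings of $\Gnp$ such that every non-edge of $\Gnp$ is monochromatic in at least one of them (making the associated product embedding injective costs only a lower-order additive term, which is harmless here). Since a proper coloring of $\Gnp$ is exactly a partition of $[n]$ into cliques of $\Gnpp q$, and a non-edge of $\Gnp$ is an edge of $\Gnpp q$, it suffices to prove that whp the minimum number of clique partitions of $H := \Gnpp q$ whose cliques together cover $E(H)$ is of order $n/\log n$. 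The lower bound is then immediate: whp the clique number of $H$ equals $\omega = (2+o(1))\log_{1/q}n$, so any clique partition $P$ of $[n]$ covers at most $\sum_{Q\in P}\binom{|Q|}{2}\le \tfrac{\omega-1}{2}\sum_{Q\in P}|Q| = O(n\log n)$ edges of $H$, while $H$ has $(1+o(1))q\binom n2 = \Theta(n^2)$ edges; hence $\Omega(n/\log n)$ partitions are needed.

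For the upper bound, fix a small constant $\eps>0$ and set $k := \lceil\eps\log_{1/q}n\rceil$, so that whp $H$ is strongly pseudorandom with respect to copies of $K_j$ for all $j\le k$ (the number of copies of $K_j$ through any fixed vertex, edge, or clique being tightly concentrated around its expectation). Using a R\"{o}dl-nibble / semi-random argument, iterated a few times, I would extract a family $\cC$ of $k$-cliques of $H$, with remainder graph $R$ on $[n]$ consisting of the edges of $H$ not covered by $\cC$, such that: (i) every edge of $H$ outside $R$ lies in $O(1)$ members of $\cC$, and $\Delta(R) = o(n/\log n)$; (ii) viewed as a $k$-uniform hypergraph on $[n]$, $\cC$ is nearly $D$-regular with $D = (1+o(1))\,qn/k = \Theta(n/\log n)$ --- indeed the members of $\cC$ through a vertex $v$ essentially decompose the $(1+o(1))qn$ edges of $H$ at $v$ into groups of size $k-1$; (iii) all codegrees of $\cC$ are $O(1) = o(D)$. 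The bulk of $E(H)$ is covered by the almost-$K_k$-decomposition part, and the nibble, run for $\Theta(\log\log n)$ rounds, thins the remainder down to the stated maximum degree.

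It remains to organize $\cC$ into few parallel classes. Since $\cC$ is a nearly regular $k$-uniform hypergraph of small codegree with growing uniformity $k = O(\log n)$, I would apply the Pippenger--Spencer-type edge-coloring theorem for hypergraphs of large uniformity --- the paper's announced main new ingredient --- to properly edge-color $\cC$ with $(1+o(1))D = O(n/\log n)$ colors. Each color class is a set of pairwise vertex-disjoint $k$-cliques of $H$, and, padded with singleton cliques on the vertices it misses, becomes a clique partition of $[n]$. Separately, Vizing's theorem properly edge-colors $R$ with $\Delta(R)+1 = o(n/\log n)$ colors, and padding each color class (a matching) with singletons gives that many further clique partitions of $[n]$. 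Together, these $O(n/\log n)$ clique partitions cover every edge of $H$, which yields $\fpra(\Gnp)\le C n/\log n$ whp and completes the upper bound.

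The hard part is the penultimate step. The classical Pippenger--Spencer and Kahn edge-coloring theorems require the hypergraph to have edges of bounded size, whereas here the uniformity $k$ grows like $\log n$; one therefore has to revisit and quantitatively strengthen the nibble/entropy machinery behind those results so that the multiplicative error remains $1+o(1)$ uniformly as $k\to\infty$ and the codegree hypothesis is exploited sharply enough. A secondary, more routine difficulty is to verify that the $K_j$-pseudorandomness of $H$ genuinely holds whp for growing $j\le k$ and is robustly maintained through the iterated nibble that builds $\cC$.
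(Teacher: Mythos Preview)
Your overall architecture---reformulate via the complement, lower bound by counting, upper bound by nibble plus edge-coloring---matches the paper, but there is a genuine gap in the penultimate step. You propose to first assemble a single nearly-regular, small-codegree family~$\cC$ of $k$-cliques and \emph{then} apply ``the Pippenger--Spencer-type edge-coloring theorem for hypergraphs of large uniformity''. But the paper's new ingredient (\refT{thm:ps:random}) is \emph{not} a Pippenger--Spencer theorem for arbitrary nearly-regular hypergraphs with $k=O(\log n)$; it only colors a \emph{random} subhypergraph~$\cH_m$ (each edge of some nice host~$\cH$ included independently, uniformly). The paper is explicit that extending Pippenger--Spencer to $O(\log n)$-uniform hypergraphs in general is the obstacle, and it sidesteps this by working in the random setting. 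A family~$\cC$ produced by an iterated nibble does not have the required product distribution, so \refT{thm:ps:random} does not apply to your~$\cC$.

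The paper resolves this by interleaving the two greedy steps rather than separating them. In each iteration~$i$ the host~$\cH$ is the hypergraph of \emph{all} $k_i$-cliques of the current graph~$G_i$ (shown to be nearly regular with small codegree via the pseudorandomness of~$G_i$), and the nibble step literally sets $\Gamma_i:=\cH_{q_i}$ by including every $k_i$-clique independently with probability~$q_i$. \refC{c:ps:random} then gives $\chi'(\Gamma_i)=O(k_i m/n)$, and one sums $\chi'(\Gamma_i)$ over~$i$ (plus Vizing on the leftover edges~$D_i,S_i,E_I$) to obtain $\chi'(\cP)=O(np/\log_{1/p}n)$. So the randomness that makes the coloring theorem work is \emph{built into} the nibble itself, not recovered afterwards. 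To fix your outline you would either have to prove a genuinely stronger deterministic Pippenger--Spencer bound for $O(\log n)$-uniform hypergraphs, or redesign your construction so that the clique family you hand to the coloring step is an i.i.d.\ sample from a nice host---which is exactly what the paper does.
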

\noindent
The Prague dimension of $n$-vertex graphs can be as large as~$n-1$, see~\cite{LNP1980,W1996}, 
so an important consequence of \refT{thm:prague} is that 
almost all $n$-vertex graphs have a significantly smaller Prague dimension of order~$n/\log n$  
(this follows since the random graph~$\Gnpp{1/2}$ is uniformly distributed over all $n$-vertex graphs).

For our purposes it will be useful to view the Prague dimension as a clique~covering and coloring~problem, 
and this convenient perspective hinges on the following equivalent definition~\cite{W1996,AA2020}: 
that $\fpra(G)$ equals the minimum number of subgraphs of the complement~$\overline{G}$ of~$G$ 
such that (i)~each subgraph is a vertex-disjoint union of cliques, and
(ii)~each edge of~$\overline{G}$ is contained in at least one of the subgraphs, but not all of~them.

Our main contribution is the upper bound on the Prague dimension in~\eqref{eq:prague:bounds}, whose proof 
carefully combines two different random greedy approaches: 
firstly, a semi-random `nibble-type' algorithm to \mbox{iteratively} \mbox{decompose} the 
edges of~$\overline{\Gnp}$ into edge-disjoint cliques of size~$O(\log n)$, 
and, secondly, a random greedy coloring algorithm to regroup these 
cliques into~$O(n/\log n)$ subgraphs of~$\overline{\Gnp}$ 
consisting of vertex-disjoint cliques, 
which together eventually gives~$\fpra(\Gnp)=O(n/\log n)$; 
see \refS{sec:strategy} for more details. 
Interestingly, this combination allows us to exploit 
the best features of both greedy approaches: 
the semi-random %greedy 
approach makes it easier to guarantee 
certain pseudo-random properties in the first decomposition step,  
and the random greedy approach makes it easier to guarantee 
that all cliques are properly colored in the second regrouping step 
(which in fact requires the pseudo-random properties established in the first~step).

One major obstacle for this natural proof approach is that the cliques have size~$O(\log n)$, 
which makes many standard tools and techniques unavailable, as they are usually restricted to objects of constant size. 
Notably, in order to overcome this technical~difficulty in the second regrouping step, in this paper we develop a new 
Pippenger--Spencer type coloring result for random hypergraphs with edges of size $O(\log n)$, 
which we believe to be of independent interest; see~\refS{sec:CI}. 
Beyond Prague dimension and hypergraph coloring results, 
further contributions of this paper include the proof of a related conjecture of F\"{u}redi and Kantor~\cite{FK2018}, 
and a strengthening of an old edge-covering result of Frieze and Reed~\cite{FR1995}; see~\refS{sec:cp}.

\subsection{Chromatic index of random subhypergraphs}\label{sec:CI}
Coloring problems play an important role in much of combinatorics, 
and in our Prague dimension proof one key ingredient 
also corresponds to a hypergraph coloring result. 
The chromatic index~$\ci(\cH)$ of a hypergraph~$\cH$ 
is the smallest number of colors needed to properly color its edges, 
i.e., so that no two intersecting edges receive the same color. 
Writing~$\Delta(\cH)$ for the maximum degree, we trivially have~$\ci(\cH) \ge \Delta(\cH)$. 
Vizing's theorem from the 1960s states that ${\ci(G) \le {\Delta(G)+1}}$ for any graph~$G$. 
Influential work of Pippenger and Spencer~\cite{PS1989} from the~1980s gives 
a partial extension to $\kr$-uniform hypergraphs~$\cH$: 
for any~$\delta>0$ they showed that ${\ci(\cH) \le {(1+\delta) \Delta(\cH)}}$ 
for any nearly regular~$\cH$ with %sufficiently 
small~codegrees and edges of size~${\kr=\Theta(1)}$.

It is challenging to extend the Pippenger--Spencer coloring arguments 
to edges of size~${\kr=O(\log n)}$, which is what we desire in 
our main Prague dimension proof (where cliques correspond to edges of an auxiliary hypergraph). 
\refT{thm:ps:random} overcomes this size obstacle in the random setting, i.e., 
for coloring random edges of any nearly regular hypergraph~$\cH$ with %comparatively 
small codegrees. 
This probabilistic Pippenger--Spencer type result indeed suffices for our purposes, 
as we shall see in Sections~\ref{sec:strategy} and~\ref{sec:packing:CI}.  
Here ${\deg_\cH(v)} := {|\{e \in E(\cH): v \in e\}|}$ and ${\deg_\cH(u,v)} := {|\{e \in E(\cH): \{u,v\} \subseteq e\}|}$ 
denote the degree and codegree, as~usual. %respectively. 
\begin{theorem}[Chromatic index of random subhypergraphs]\label{thm:ps:random} 
For all reals~$\delta,\sigma,\balp>0$ with~${\balp \le \delta\sigma/30}$ 
there is~${n_0=n_0(\delta,\sigma,\balp)>0}$  
such that, for all integers~${n \ge n_0}$, ${2 \le \kr \le \balp \log n}$, ${n^{1+\sigma} \le m \le n^{\kr n^{\sigma/5}}}$ 
and all reals~${D > 0}$, the following holds 
for every $n$-vertex $\kr$-uniform hypergraph~$\cH$ satisfying 
%the degree and codegree assumptions
\begin{gather}
\label{as:degcodeg}
\max_{v\in V(\cH)}|\deg_\cH(v)-D|\le n^{-\sigma}D 
\qquad \text{ and } \qquad 
\max_{u\neq v\in V(\cH)}\hspace{-0.075em}\deg_\cH(u,v)\le n^{-\sigma}D.
\end{gather} 
We have $\Pr(\ci(\cH_{m}) \le {(1+\delta)\kr m/n}) \ge 1-m^{-\omega(\kr)}$, 
where $\cH_{m}$ denotes the random subhypergraph of~$\cH$ with edges~$e_1, \ldots, e_{m}$, 
where each edge~$e_i$ is independently chosen uniformly at random from~$\cH$. 
\end{theorem}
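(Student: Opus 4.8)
The plan is to realise a proper edge-colouring of $\cH_{m}$ with $k$ colours as a proper vertex-colouring of the intersection graph $\cG$ on vertex set $\{1,\dots,m\}$, where $i\sim j$ when $e_i\cap e_j\neq\emptyset$; equivalently $\cG=\bigcup_{v\in V(\cH)}K_v$ is an edge-union of cliques $K_v:=\{i\in\{1,\dots,m\}: v\in e_i\}$, each of size $\deg_{\cH_{m}}(v)$, with each index lying in exactly $\kr$ of them. Writing $D':=\kr m/n$ for the typical value of $\deg_{\cH_{m}}(v)$, I would colour $\cG$ with $(1+\delta)D'$ colours by a semi-random (R\"odl nibble / Pippenger--Spencer-type) argument, exploiting throughout that $\cH_{m}$ is a \emph{random} subhypergraph; this is precisely what should let the uniformity reach $\kr=\Theta(\log n)$, where a black-box appeal to Pippenger--Spencer is unavailable.

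First I would establish that $\cH_{m}$ is typically pseudo-random. Since $|E(\cH)|=(1+o(1))nD/\kr$ by \eqref{as:degcodeg}, a Chernoff bound applied to the i.i.d.\ indicators $\indic{v\in e_i}$ and $\indic{\{u,v\}\subseteq e_i}$ shows that, for any $\eta$ we fix (morally $\eta=n^{-\Theta(\sigma)}$), except with probability $\exp(-\Omega(\eta^2D'))$ per vertex or pair we have $\deg_{\cH_{m}}(v)=(1\pm\eta)D'$ for all $v$ and $\deg_{\cH_{m}}(u,v)\le\eta D'$ for all $u\neq v$ (invoking \eqref{as:degcodeg}); a few further boundedness events of the same flavour (no index of too large multiplicity, no two edges sharing too many vertices) are handled likewise. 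So $\cG$ is, with the stated probability, a union of $(1\pm\eta)D'$-sized cliques with pairwise overlaps $\le\eta D'=o(D')$. Since $e_1,\dots,e_m$ are i.i.d., I may also normalise $m$: split $\{1,\dots,m\}$ into consecutive blocks $I_1,\dots,I_s$, colour each $\cH_{I_j}$ on its own palette of $(1+\delta)\kr|I_j|/n$ colours, and union-bound; the palettes are disjoint, the colour counts add to $(1+\delta)\kr m/n$, and choosing the $|I_j|$ in a suitable polynomial window (large enough that $\exp(-\Omega(\eta^2\kr|I_j|/n))$ comfortably beats $m^{-\omega(\kr)}$, which is elementary given $n^{1+\sigma}\le m\le n^{\kr n^{\sigma/5}}$ and $\kr\le\balp\log n$) reduces to $m$ polynomially bounded.

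For the colouring I would follow the semi-random method: build it in rounds, each round processing a fresh batch of the i.i.d.\ edges, tentatively assigning to each a colour from the palette $[(1+\delta)D']$ drawn uniformly from the colours not yet used on an already-coloured neighbour, then permanently keeping the conflict-free assignments and deferring the rest. The analysis tracks, by the method of deferred decisions together with concentration (Azuma/Talagrand type, but in the ``typically bounded differences'' regime), the degree and colour-availability statistics: for each vertex $v$ the number of still-uncoloured edges through $v$, and for each still-uncoloured edge $e$ the number of palette colours still free at $e$. One shows these follow their expected trajectories, so that the invariant ``the uncoloured hypergraph remains near-regular with small codegree, its degree parameter shrinking in a controlled way'' is preserved; once the uncoloured max degree drops below $\delta D'/(2\kr)$ one finishes greedily with at most $\delta D'/2$ fresh colours, for a total of $(1+\delta)D'$. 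The randomness of $\cH_{m}$ is used to keep the concentration strong enough despite $\kr=\Theta(\log n)$, and the hypothesis $\balp\le\delta\sigma/30$ is calibrated so that the error accumulated over all rounds stays below $\delta$.

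I expect the concentration step to be the main obstacle. Because an edge meets $\kr=\Theta(\log n)$ vertices, off-the-shelf bounded-differences inequalities give deviations of order $\kr\sqrt{N}$ (with $N$ the number of random choices exposed), which swamps the tracked quantities (of order $D'$ up to polylogarithmic factors), and the standard nibble book-keeping loses factors growing with $\kr$; this is exactly why one cannot quote Pippenger--Spencer. Getting through requires (i) exploiting the extra independence in $\cH_{m}$ so that the dangerous ``size-$\kr$'' martingale increments are rare enough to be absorbed (a uniformly random edge of $\cH$ rarely meets a fixed small vertex set), (ii) choosing the number of rounds and the per-round batch size as explicit functions of $\kr,\sigma,\delta$ so the per-round multiplicative error is $O(\kr/(\sigma\log n))$ and the total is $o(\delta)$, and (iii) propagating failure probabilities of the form $\exp(-\Omega(\eta^2D'))$ through all rounds, vertices and colours so the final bound still beats $m^{-\omega(\kr)}$. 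Making the concentration robust to the large uniformity is, I expect, the real technical heart.
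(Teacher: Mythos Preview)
Your proposal takes a genuinely different route from the paper. The paper does \emph{not} use a semi-random nibble in rounds with a greedy cleanup; instead it analyses a \emph{sequential} random greedy algorithm: process the i.i.d.\ edges $e_1,e_2,\ldots$ one at a time, assigning each $e_{i+1}$ a colour chosen uniformly from those still available at $e_{i+1}$, and track via the differential equation method the two families $|Q_e(i)|$ (colours still available at edge $e$) and $|Y_{v,c}(i)|$ (edges through $v$ at which colour $c$ is still available). The crucial simplification is a ``more random edges'' trick: rather than colouring exactly $m$ edges, one runs the greedy on $(1+\delta)m$ i.i.d.\ edges and shows that the first $\lfloor(1-\gamma)(1+\delta)m\rfloor=m$ of them get coloured (taking $\gamma=1-1/(1+\delta)$, so that $\balp\le\delta\sigma/30$ gives $\balp\log(1/\gamma)\le\sigma/30$), using $q=\lfloor(1+\delta)\kr m/n\rfloor$ colours. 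This entirely sidesteps any ``last few edges'' cleanup phase.

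What each approach buys: the paper's sequential setup exploits the i.i.d.\ structure at \emph{every} step---since $e_{i+1}$ is uniform in $E(\cH)$ independent of the history, the expected one-step changes of $|Q_e|$ and $|Y_{v,c}|$ are clean ratios of the tracked quantities, and a Freedman-type supermartingale inequality handles the concentration with one-step increments of size~$1$ (for $|Q_e|$) and $\kr n^{-\sigma}D$ (for $|Y_{v,c}|$, via the codegree bound). There is no batch bookkeeping, no accumulation of per-round multiplicative errors, and no deferred edges that have lost their independence. Your nibble approach is plausibly workable, but the worry you yourself flag is real: after the first round the deferred (uncoloured) edges are no longer i.i.d.\ from $\cH$, so the ``extra independence of $\cH_m$'' you want to leverage is available only for each fresh batch, not for the running uncoloured hypergraph; making the invariants persist across rounds with $\kr=\Theta(\log n)$ then becomes exactly the Pippenger--Spencer obstacle you set out to avoid. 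The paper's sequential-greedy-plus-extra-edges device is precisely the idea that dissolves this difficulty.
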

\begin{remark}\label{rem:ps:random}
Noting~${D \cdot m/|E(\cH)|} =(1+o(1)) {\kr m/n} \gg r \log m$, 
for any real~$\eps > 0$ it is straightforward to see that the maximum degree 
satisfies~$\Delta(\cH_{m}) = (1\pm \eps) {\kr m/n}$ with probability at least~${1-m^{-\omega(\kr)}}$, say. 
\end{remark}
\noindent
As discussed, 
for this paper the key point is that \refT{thm:ps:random} permits edges of size~${r=O(\log n)}$; 
we have made no attempt to optimize the ad-hoc assumptions on the number of edges~$m$ or the $n^{-\sigma}$ approximation 
terms in~\eqref{as:degcodeg}.  
The explicit technical assumption~${\balp \le \delta\sigma/30}$  allows for some flexibility in applications: 
setting ${\balp= \delta\sigma/30}$ and ${\delta = 30\balp/\sigma}$, respectively, 
using~\refR{rem:ps:random} we readily infer that whp   
\begin{equation}\label{eq:ps:random:case}
\ci(\cH_{m}) 
\: \le \:
\begin{cases}
\bigpar{1+2\delta}  \cdot \Delta(\cH_m) \quad & \text{if $\kr=o(\log n)$,}\\
O(1) \cdot \Delta(\cH_m) & \text{if $\kr = O(\log n)$,}\\
\end{cases}
\end{equation}
which gives Pippenger--Spencer like chromatic index bounds for many non-constant edge-sizes~$\kr$; 
we believe that these are of independent interest (see also~\refC{c:ps:random}).  
%with further applications beyond this paper (see also~\refC{c:ps:random}). 

We prove \refT{thm:ps:random} by showing that a simple random greedy algorithm 
whp produces the desired coloring of the random edges~$e_1, \ldots, e_m$ from~$\cH$. 
This algorithm sequentially assigns each edge~$e_i$ a random color in~$\{1, \ldots, \floor{(1+\delta)\kr m/n}\}$ 
that does not appear on some adjacent edge~$e_j$ with~$j < i$. % see~\refS{sec:edgechr} for the~details. 
This coloring algorithm is very natural: 
Kurauskas and Rybarczyk~\cite{KR2015} analyzed it when~$\cH$ is the complete \mbox{$n$-vertex} \mbox{$r$-uniform} hypergraph, 
and its idea also underpins earlier work that extends the Pippenger--Spencer result to list-colorings~\cite{K1996,MR2000}. 
Taking advantage of the random setting, our proof of \refT{thm:ps:random} uses 
differential equation method~\cite{W1995,B2009,W2019} based martingale arguments 
to show that this greedy algorithm whp properly colors the first~$m$ out of~$(1+\delta)m$ random edges. 
This `more random edges' twist enables us to sidestep some of the `last few edges' complications 
that usually arise in the deterministic setting~\cite{PS1989,K1996,MR2000},  
which is one of the reasons why our analysis can allow for edges of size~$O(\log n)$;  
see~\refS{sec:edgechr} for the~details.

\subsection{Partitioning the edges of a random graph into cliques}\label{sec:cp}
Further motivation for studying the Prague dimension comes from its close connection to the 
covering and decomposition problems that pervade combinatorics,  
one interesting \mbox{non-standard} feature being that \refT{thm:prague} 
requires usage of cliques with~$O(\log n)$ vertices, rather than just subgraphs of constant~size. %, as usual. 
The clique covering number~$\ccn(G)$ of a graph~$G$ (also called intersection number) 
is the minimum number of cliques in~$G$ that cover the edge-set of~$G$. 
Similarly, the clique partition number~$\cpn(G)$ % of a graph~$G$ 
is the minimum number of cliques in~$G$ that partition the edge-set of~$G$. % into edge-disjoint cliques.  
The question of estimating these natural graph parameters was raised by Erd\H{o}s, Goodman and P\'{o}sa~\cite{EGP1966} in~1966. 
Motivated in parts by applications~\cite{Orlin1977,KSW1978,Roberts1985,CPP2016}, % and intersection patterns of sets~\cite{Hall1941,Ryser1974},  
both~$\ccn(G)$ and~$\cpn(G)$ have since been extensively studied 
for many interesting graph classes, see~e.g.~\cite{Wallis1982,Alon1986,CEPW1986,EOZ1993,CV2008,CFS2014}. % and the many references~therein. 

For random graphs, the study of the clique covering number was initiated in the~1980s by 
Poljak, R\"{o}dl and Turz\'{\i}k~\cite{PRT1981} and Bollob\'{a}s, Erd\H{o}s, Spencer and West~\cite{BESW1993}. 
In~1995, Frieze and Reed~\cite{FR1995} showed that 
whp~$\ccn(\Gnp) = \Theta(n^2/(\log n)^2)$ for constant edge-probabilities~$p$. 
Constructing a clique covering is certainly easier than constructing a clique partition, 
since it does not have to satisfy such a rigid constraint.
Indeed, while obviously~$\ccn(G) \le \cpn(G)$, the ratio~$\cpn(G)/\ccn(G)$
can in fact be arbitrarily large, see~\cite{EFO1988}. % as large as~$\Theta(n^2) for $n$-vertex graphs
However, \refT{thm:packing} demonstrates that for most graphs 
the clique partition number and clique covering number have the same order of~magnitude. 
\begin{theorem}[Clique covering and partition number of random graphs]\label{thm:packing}
For every fixed real~$\gamma \in (0,1)$ there are constants~$c>0$ and $C=C(\gamma)> 0$ so that if the edge-probability~$p=p(n)$ satisfies ${n^{-2} \ll p \le 1-\gamma}$,  
then with high probability 
\begin{equation}\label{eq:packing:bounds}
c\frac{n^2p}{(\log_{1/p} n )^2} \le \ccn(\Gnp) \le \cpn(\Gnp) \le C\frac{n^2p}{(\log_{1/p} n)^2} .
\end{equation}
\end{theorem}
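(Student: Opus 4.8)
The lower bound is the easy half and is essentially due to Frieze and Reed~\cite{FR1995}: since $\ccn(G)\le\cpn(G)$ always, it suffices to bound $\ccn(\Gnp)$ from below. The plan is a counting argument. With high probability the largest clique in $\Gnp$ has at most roughly $2\log_{1/p}n$ vertices, so every clique covers $O((\log_{1/p}n)^2)$ edges; since $\Gnp$ has $\Theta(n^2 p)$ edges whp, any clique cover needs $\Omega(n^2p/(\log_{1/p}n)^2)$ cliques. One must be slightly careful to handle the regime $p=p(n)\to 0$ (so $\log_{1/p}n$ is used rather than $\log n$) and the sparse range $n^{-2}\ll p$, but these are standard first-moment / concentration estimates on clique sizes and edge counts in $\Gnp$.

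The upper bound $\cpn(\Gnp)\le Cn^2p/(\log_{1/p}n)^2$ is the substantive claim, and the plan is to mirror the clique-decomposition half of the Prague dimension proof, but using \emph{edges} rather than the complement $\overline{\Gnp}$ and dropping the second (regrouping/coloring) step entirely. Concretely: run a semi-random nibble-type algorithm that iteratively pulls out edge-disjoint cliques of size $\kr=\Theta(\log_{1/p}n)$ from $\Gnp$, analyzed via the differential equation method, until only a small (pseudo-random, low-degree) leftover graph remains; then finish off the leftover edges one at a time as single edges (trivial $K_2$-cliques) or small cliques. The main clique-nibble phase removes $(1-o(1))$ of the $\binom n2 p$ edges in $\Theta\bigl((\log_{1/p}n)^2\bigr)$-sized pieces, contributing $(1+o(1))\cdot \tfrac12 n^2 p/(\log_{1/p}n)^2$ cliques; the leftover phase must be shown to contribute only $O(n^2p/(\log_{1/p}n)^2)$ more, which is where controlling the size and pseudo-randomness of the leftover matters. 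One should also verify that cliques of size $\Theta(\log_{1/p}n)$ are abundant enough in $\Gnp$ throughout the process — i.e.\ that the relevant "almost $K_\kr$-decomposition" has the expected number of cliques through each vertex/edge — which uses that $\Gnp$ whp has the right count and pseudo-random distribution of $\kr$-cliques for $\kr$ slightly below the clique number, and here the $p=p(n)$ dependence forces $\kr$ to be taken as a suitable function of $\log_{1/p}n$.

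The hard part will be making the clique-nibble analysis work with cliques of non-constant size $\Theta(\log_{1/p}n)$ and, simultaneously, with a vanishing edge-probability $p=p(n)$ — exactly the "$O(\log n)$ size obstacle" flagged in the introduction, now compounded by sparsity. Standard nibble/differential-equation arguments assume fixed-size removed structures, so one must track degrees and codegrees of the residual graph through $\Theta(\log_{1/p}n)$-dependent trajectories and show the relevant random variables stay concentrated; the tail bounds need to beat the (now $n$-dependent) number of steps and structures involved. I expect the cleanest route is to prove a general "random greedy $K_\kr$-packing leaves a pseudo-random sparse remainder" statement valid for $\kr=\Theta(\log_{1/p}n)$ in host graphs that are themselves sufficiently pseudo-random (which $\Gnp$ is, whp), and then apply it; the bookkeeping to certify that $\Gnp$ satisfies the needed pseudo-randomness hypotheses uniformly over the allowed range $n^{-2}\ll p\le 1-\gamma$ will be the most delicate routine-but-lengthy piece. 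The second, coloring-based ingredient (\refT{thm:ps:random}) is \emph{not} needed here, since for $\cpn$ we only need to partition into cliques, not to regroup cliques into vertex-disjoint-union subgraphs; dropping it is what makes \refT{thm:packing} strictly easier than the upper bound in \refT{thm:prague}.
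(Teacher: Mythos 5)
Your proposal follows the same route as the paper for the lower bound (standard first-moment clique-size and edge-count estimates) and for the substantive upper bound: the paper likewise proves $\cpn(\Gnp)=O(n^2p/(\log_{1/p}n)^2)$ by analyzing a semi-random nibble that repeatedly pulls edge-disjoint cliques of size $\Theta(\log_{1/p}n)$ out of $\Gnp$, tracks the pseudo-randomness of the residual graph through $I$ rounds via concentration inequalities (Theorem~\ref{thm:pseudorandom}, with the clique size $k_i=\lceil\tau\log_{1/p_i}n\rceil$ drifting upward as $p_i$ decays), and finishes by dumping the sparse leftover $E_I$ in as $K_2$'s. This is exactly the content of Theorem~\ref{thm:mainpacking} (the clique partition $\cP$ with $|\cP|=O(n^2p/(\log_{1/p}n)^2)$), from which Theorem~\ref{thm:packing} follows.

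Your key structural observation is also correct: the random-greedy edge-coloring result Theorem~\ref{thm:ps:random} is not logically needed for Theorem~\ref{thm:packing}. The paper's \emph{presentation} of the size bound in Section~\ref{sec:Psize} does happen to reuse the chromatic-index estimate on $\Gamma_i$ via the double-counting $|\Gamma_i| k_i\le n\cdot\chi'(\Gamma_i)$, but this is purely a notational shortcut (the authors explicitly say direct estimation of each term is ``conceptually straightforward''): $|\Gamma_i|\sim\Bin(|\cC_i|,q_i)$ is immediately concentrated by Chernoff around $|\cC_i|q_i\approx n^2p_i/(k_i^2k^\tau)$, so you can bound $|\cP|$ without ever invoking Theorem~\ref{thm:ps:random}. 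That makes Theorem~\ref{thm:packing} genuinely lighter than Theorems~\ref{thm:prague} and~\ref{thm:thickCI}, as you say.

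One small gap: you speak of running the nibble ``uniformly over the allowed range $n^{-2}\ll p\le1-\gamma$.'' That cannot work: for $p$ much smaller than any fixed negative power of $n$, the graph has essentially no cliques of size $\Theta(\log_{1/p}n)$ to nibble, and indeed $\log_{1/p}n=O(1)$ in that regime. The paper therefore proves Theorem~\ref{thm:mainpacking} only for $p\ge n^{-\alpha}$ (with $\alpha=\tau/\cc$), and observes separately that for $n^{-2}\ll p\le n^{-\alpha}$ the trivial partition $\cP:=E(\Gnp)$ already has $|\cP|\sim\binom n2p=O\bigl(n^2p/(\log_{1/p}n)^2\bigr)$ since $\log_{1/p}n\le1/\alpha$. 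You need to make this case split explicit; your ``finish leftover edges as $K_2$'s'' remark handles the end of the nibble but not the regime where the nibble never gets started.

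Finally, your asymptotic constant ``$(1+o(1))\cdot\tfrac12 n^2p/(\log_{1/p}n)^2$'' for the main nibble phase is optimistic — the cliques removed have size $\approx\tau\log_{1/p}n$ with a small absolute constant $\tau$ (the paper fixes $\tau=1/9$) rather than near the clique number $\sim2\log_{1/p}n$, so the constant $C$ produced this way is far from $1/2$. This is harmless for the $\Theta$-statement of Theorem~\ref{thm:packing}, but worth keeping in mind since Lemma~\ref{lemma:lowerbounds} shows the guess $\ccn(\Gnp)\sim\binom n2p/\binom s2$ with $s\sim2\log_{1/p}n$ is actually false.
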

\noindent
The main contribution of~\eqref{eq:packing:bounds} is the upper bound, 
which strengthens the main result of Frieze and Reed~\cite{FR1995} 
from clique coverings to clique~partitions, and also allows for~$p=p(n) \to 0$. 
Here the mild assumption~${p \le 1-\gamma}$ turns out to be necessary, 
since \refL{lemma:lowerbounds} implies that whp~${\ccn(\Gnp)/(n^2p/(\log_{1/p} n)^2) \to \infty}$ as~$p \to 1$. 
The lower bound in~\eqref{eq:packing:bounds} is straightforward: 
it is well-known that~$\Gnp$ whp has~${m=\Theta(n^2p)}$ edges and largest clique of size~${\omega=O(\log_{1/p} n)}$, 
which gives~$\ccn(\Gnp) \ge m/\binom{\omega}{2} = \Omega(n^2p/(\log_{1/p} n)^2)$.

To gain a better combinatorial understanding of clique coverings,  
it is instructive to study additional properties besides the size, 
such as their thickness~${\cct(G) := {\min_{\cC} \Delta(\cC)}}$ and 
chromatic index~${\ccc(G) := {\min_{\cC} \ci(\cC)}}$, where the minimum is taken over all clique coverings~$\cC$ 
of the edges of~$G$ (formally thinking of~$\cC$ as a hypergraph with vertex-set~$V(G)$ and~edge-set~$\cC$). 
Notably, the parameters~$\ccc(\overline{G})$ and~$\cct(\overline{G})$ approximate 
the Prague dimension and the so-called Kneser rank of~$G$, see~\cite{FK2018}. 
In particular, we have %for any non-empty graph~$G$ we~have 
\begin{equation}\label{eq:PraDimApprox}
\ccc(\overline{G}) \le \fpra(G) \le \ccc(\overline{G}) + 1, 
\end{equation}
which follows by noting that the color classes of a properly colored collection~$\cC$ of cliques 
naturally correspond to subgraphs consisting of vertex-disjoint unions of cliques 
(the~$+1$ in the upper bound is only needed to handle boundary cases where an edge is contained in cliques from all color classes); 
see~\cite{NR1976,FK2018}.

For random graphs,  F\"{u}redi and Kantor~\cite{FK2018} showed that the clique covering thickness 
is whp ${\cct(\Gnp)} ={\Theta(n/\log n)}$ for constant edge-probabilities~$p$. 
%motivated by their study of Kneser ranks.  
%strengthening an earlier result of Poljak, R\"{o}dl and Turz\'{\i}k~\cite{PRT1981} from the~1980s. 
Supported by ${\cct(G) \le \ccc(G)}$ and further evidence, 
they conjectured that the clique covering chromatic index is whp also 
${\ccc(\Gnp)} = {\Theta(n/\log n)}$ for constant~$p$, see~\cite[Conjecture~17]{FK2018}. 
The following theorem proves their chromatic index conjecture in a strong form, 
allowing for $p=p(n) \to 0$. 
More importantly, \refT{thm:thickCI} and inequality~\eqref{eq:PraDimApprox} 
together imply our main Prague dimension result~\refT{thm:prague}, 
since the complement~$\overline{\Gnp}$ of~$\Gnp$ has the same distribution as~$\Gnpp{1-p}$. 
\begin{theorem}[Thickness and chromatic index of clique coverings of random graphs]\label{thm:thickCI} 
For every fixed real~${\gamma \in (0,1)}$ there are constants~$c>0$ and $C=C(\gamma)> 0$ so that if the edge-probability~$p=p(n)$ satisfies ${n^{-1}\log n \ll p \le 1-\gamma}$,  
then with high probability  
\begin{equation}\label{eq:thickCI:bounds}
c\frac{np}{\log_{1/p} n }  \le \cct(\Gnp) 
\le \ccc(\Gnp) 
\le C\frac{np}{\log_{1/p} n}  .
\end{equation}
\end{theorem}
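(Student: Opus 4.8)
The plan is to dispatch the easy parts first and concentrate on the upper bound $\ccc(\Gnp) \le C\,np/\log_{1/p}n$. The inequality $\cct(\Gnp)\le\ccc(\Gnp)$ is immediate, since for any clique covering $\cC$ a vertex lying in $\Delta(\cC)$ of the cliques sees $\Delta(\cC)$ pairwise-intersecting hyperedges and hence forces $\ci(\cC)\ge\Delta(\cC)$, so minimizing over $\cC$ gives $\cct\le\ccc$. For the lower bounds in~\eqref{eq:thickCI:bounds} I would use the standard facts that whp $\Gnp$ has minimum degree $(1-o(1))np$ and clique number $O(\log_{1/p}n)$: every clique through a vertex $v$ covers at most $\omega(\Gnp)-1$ of the $\deg_{\Gnp}(v)$ edges at $v$, so any clique covering $\cC$ satisfies $\Delta(\cC)\ge \deg_{\Gnp}(v)/(\omega(\Gnp)-1)\ge c\,np/\log_{1/p}n$, which bounds $\cct(\Gnp)$, and hence $\ccc(\Gnp)$, from below.

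For the upper bound I would split on the size of $p$. If $p\le n^{-\gamma'}$ for a small constant $\gamma'=\gamma'(\gamma)>0$, then $\log_{1/p}n=O(1)$, so it suffices to take the clique covering $\cC=E(\Gnp)$ (each edge a $K_2$) and apply Vizing's theorem: $\ccc(\Gnp)\le\ci(\Gnp)\le\Delta(\Gnp)+1=O(np)=O(np/\log_{1/p}n)$ whp. So from now on assume $n^{-\gamma'}\le p\le 1-\gamma$, where $np$ is polynomially large and $\log_{1/p}n\to\infty$.

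In this main range the plan is the two-phase strategy of~\refS{sec:strategy}, with $\ell:=\lceil\eta\log_{1/p}n\rceil$ for a small constant $\eta=\eta(\delta,\sigma,\gamma)>0$. \emph{Phase~1 (packing).} Using a semi-random nibble-type algorithm, decompose the edges of $\Gnp$ into a clique covering $\cC=\cC_\ell\cup\cC_1$, where $\cC_\ell$ is a family of edge-disjoint $\ell$-cliques and $\cC_1$ consists of single edges covering the leftover; I would aim to show that whp $\cC_\ell$ is \emph{pseudo-random} --- nearly $(np/\ell)$-regular and, being edge-disjoint, with all codegrees at most $1$ --- while the leftover graph $\bigcup\cC_1$ has maximum degree $o(np/\log_{1/p}n)$. \emph{Phase~2 (coloring).} Color $\cC_\ell$ using the probabilistic Pippenger--Spencer result \refT{thm:ps:random}, applied to the (deterministic, conditioned-typical) host hypergraph $\cH$ of all $\ell$-cliques of $\Gnp$, which --- using $p\ge n^{-\gamma'}$ --- is whp nearly regular with codegrees at most $n^{-\sigma}$ times its common degree. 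A coupling of the nibble's clique selections with the i.i.d.\ edge model shows that whp $\cC_\ell\subseteq E(\cH_m)$ for some $m=(1+o(1))\,n^2p/\ell^2$; one then checks the hypotheses of \refT{thm:ps:random} --- in particular $2\le \ell\le\balp\log n$ with $\balp\le\delta\sigma/30$ by the choice of $\eta$, and $n^{1+\sigma}\le m\le n^{\ell n^{\sigma/5}}$ using $\gamma'<1-\sigma$ --- to conclude $\ci(\cC_\ell)\le\ci(\cH_m)\le (1+\delta)\ell m/n=(1+o(1))(1+\delta)\,np/\ell\le C'\,np/\log_{1/p}n$ whp. Finally, recolor the leftover $\bigcup\cC_1$ with $o(np/\log_{1/p}n)$ fresh colors by Vizing's theorem; then the whole covering $\cC$ uses at most $C\,np/\log_{1/p}n$ colors, giving $\ccc(\Gnp)\le C\,np/\log_{1/p}n$ whp and, with the lower bound, the theorem.

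The main obstacle is Phase~1: carrying out the nibble with cliques of the non-constant size $\ell=\Theta(\log n)$ while keeping the output pseudo-random in the above sense \emph{and} controlling the maximum degree of the leftover graph. Standard nibble and differential-equation-method analyses are tuned to objects of constant size, so here one must track $\ell$-clique counts through vertices, codegrees, and the associated trajectory errors with enough uniformity to survive the $\log n$ blow-up; this is where most of the work lies, and it dovetails with the analogous $O(\log n)$-size difficulty already handled in \refT{thm:ps:random}. A secondary technical point is making the Phase~2 coupling precise --- showing that the adaptively built family $\cC_\ell$ is whp contained in an i.i.d.\ random subhypergraph $\cH_{(1+o(1))m}$ of the host, so that monotonicity $\ci(\cC_\ell)\le\ci(\cH_{(1+o(1))m})$ lets us invoke \refT{thm:ps:random} as a black box. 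The remaining ingredients --- the degree, codegree and clique-number estimates for $\Gnp$, the regularity of the $\ell$-clique hypergraph of a typical $\Gnp$, and the bookkeeping of $C=C(\gamma)$ --- are routine.
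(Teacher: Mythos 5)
The easy parts of your proposal (the inequality $\cct\le\ccc$, the first-moment/degree lower bound, and the trivial $K_2$-covering for $p\le n^{-\gamma'}$) are fine and match the paper. The structural problem is in Phase~1, where you fix a single clique size $\ell=\lceil\eta\log_{1/p}n\rceil$ for a small constant $\eta$ and aim to peel off edge-disjoint $\ell$-cliques until the leftover has maximum degree $o(np/\log_{1/p}n)$. A nibble with this fixed $\ell$ cannot get the residual density that low. After several rounds the residual graph looks like $\Gnpp{p_i}$, and the expected number of $\ell$-cliques through a surviving edge in density $p_i$ is roughly $n^{\ell-2}p_i^{\binom{\ell}{2}-1}$; this drops below~$1$ — and the nibble stalls — once $p_i\lesssim n^{-2/(\ell+1)}\approx p^{2/\eta}$. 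For any fixed $\eta>0$ and constant $p\in(0,1-\gamma]$ this threshold $p^{2/\eta}$ is a constant, so the leftover graph has maximum degree $\Theta(n)$, and the Vizing step on $\bigcup\cC_1$ costs $\Theta(n)$ colors. Since the target is $O(np/\log_{1/p}n)=O(n/\log n)$ for constant $p$, this overshoots by a $\log n$ factor. You also cannot fix it by shrinking $\eta$: keeping $\ci(\cC_\ell)\approx np/\ell$ within budget forces $\ell=\Omega(\log_{1/p}n)$, i.e.\ $\eta=\Omega(1)$. This is precisely why the paper runs the nibble with a \emph{cascade} of clique sizes $k_i=\lceil\tau\log_{1/p_i}n\rceil$ that shrink as $p_i$ decays exponentially (see \eqref{eq:defs} and \refT{thm:pseudorandom}), so that the residual can be driven all the way down to $p_I\approx p/(\log n)^{\Theta(1)}$ and the per-round chromatic indices sum geometrically to $O(np/\log_{1/p}n)$ as in \eqref{eq:cindex:bound:1}--\eqref{eq:sum-cindex:bound}.

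A secondary issue is the Phase~2 coupling. You want $\cC_\ell\subseteq E(\cH_m)$ with $m=(1+o(1))n^2p/\ell^2$ so that monotonicity gives $\ci(\cC_\ell)\le\ci(\cH_m)$, but this coupling cannot hold: with that $m$, the i.i.d.\ sample $\cH_m$ has an approximately Poisson$(1)$ number of cliques per edge (hence a positive fraction of overlapping pairs), whereas $\cC_\ell$ is edge-disjoint with $|\cC_\ell|\approx m$ cliques; you cannot embed an edge-disjoint family of size $(1-o(1))m$ inside a typical $\cH_m$ of the same cardinality. Moreover $\cC_\ell$ is built adaptively against shrinking residual graphs, so the cliques chosen late are far from a uniform sample from the host $\ell$-clique hypergraph of $\Gnp$. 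The paper sidesteps both problems by applying \refT{thm:ps:random} (via \refC{c:ps:random}) round by round to the raw Bernoulli sample $\Gamma_i$ — which literally \emph{is} an i.i.d.\ subhypergraph of the $k_i$-clique hypergraph of $G_i$, conditional on $G_i$ — and then paying separately (by Vizing) for the overlap edges $D_i$ it discards; see \eqref{eq:P:cbound:2} and \eqref{eq:P:cbound:Gammai}. So the shape of your argument is right, but the single-$\ell$, single-coupling version genuinely fails, and the decreasing $k_i$ and per-round bookkeeping are not optional technicalities but the content of the proof.
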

\begin{remark}
Our proof shows that the upper bound in~\eqref{eq:thickCI:bounds} remains valid 
when the definition of~$\ccc(G)$ is restricted to clique partitions of the edges 
(instead of clique coverings); see Sections~\ref{sec:strategy} and~\ref{sec:nibble}. 
\end{remark}
\noindent
The main contribution of~\eqref{eq:thickCI:bounds} is the upper bound, 
%which in fact proves a further Prague dimension conjecture for random bipartite graphs, see~\cite[Conjecture~16 and Section~6.1]{FK2018}. 
where the mild assumption~${p \le 1-\gamma}$ again turns out to be necessary, 
since \refL{lemma:lowerbounds} implies that whp~${\cct(\Gnp)/(np/\log_{1/p} n) \to \infty}$ as~$p \to 1$. 
The lower bound in~\eqref{eq:thickCI:bounds} is straightforward: 
it is well-known that~$\Gnp$ whp has maximum degree~${\Delta=\Theta(np)}$ and largest clique of size~${\omega=O(\log_{1/p} n)}$, 
which gives~$\cct(\Gnp) \ge \Delta/(\omega -1) = \Omega(np/\log_{1/p} n)$.

\subsection{Proof strategy: finding a clique partition of a random graph}\label{sec:strategy}
We now comment on the proofs of Theorems~\ref{thm:packing}--\ref{thm:thickCI}, 
for which it remains to establish the upper bounds in inequalities~\eqref{eq:packing:bounds} and~\eqref{eq:thickCI:bounds}. 
In~\refS{sec:nibble} we shall establish these upper bounds using the following 
proof~strategy, which finds a clique partition~$\cP$ of~$\Gnp$ with the desired properties, 
i.e., size and chromatic index bounds.

\textbf{Step~1: Decomposing the edges of~$\Gnp$ into a clique partition~$\cP$.} 
We first use a semi-random `nibble-type' algorithm to incrementally construct a decreasing sequence of $n$-vertex~graphs  
\begin{equation}
\label{eq:sketch:Gi}
\Gnp=G_0 \: \supseteq \: G_1 \: \supseteq \; \cdots \; \supseteq \: G_I ,
\end{equation}
inspired by the semi-random approaches of Frieze and Reed~\cite{FR1995} and Guo and Warnke~\cite{GW2020}. 
Omitting some technicalities, the main idea is to obtain~$G_{i+1}$ from~$G_i$ 
by removing the edges of a random collection~$\cK_i$ of cliques of size~$k_i=O(\log n)$ from~$G_i$. 
%omitting for simplicity how we make these cliques edge-disjoint. 
We then put all remaining edges of $G_I$ into~$\cK_I$, %i.e., set~$\cK_I:=E(G_I)$, 
to ensure~that 
\begin{equation}
\label{eq:sketch:cP}
\cP \; = \; \cK_0 \cup \: \cdots \: \cup \cK_I
\end{equation}
covers all edges of~$\Gnp$. 
Here we exploit the flexibility of the semi-random approach, which allows us to add extra wrinkles to the algorithm. 
In particular, using concentration inequalities, these extra wrinkles enable us to 
show that whp all graphs~$G_i$ stay pseudo-random, 
i.e., that~$G_i$ `looks like' a random graph~$\Gnpp{p_i}$ with suitably decaying edge-probabilities~$p_i$;  
%which will be important for the subsequent arguments; 
see~\refS{sec:alg:details} and~\refT{thm:pseudorandom} for the~details.

\textbf{Step~2: Coloring the clique partition~$\cP$.} 
We then use the basic observation 
\begin{equation}
\label{eq:sketch:cP:col1}
\ci(\cP) \; \le \; \sum_{0 \le i \le I}\ci(\cK_i) \; \le \; \sum_{0 \le i <  I}\ci(\cK_i) \: + \: 2\Delta(G_I), %\!+\!1,
\end{equation}
where the last inequality~$\ci(\cK_i) \le  2\Delta(G_I)$ follows from Vizing's theorem, since~$\cK_I=E(G_I)$ simply contains all edges of~$G_I$. 
Thinking of~$\cK_i$ as a hypergraph with vertex-set~$V(G_i)$ and~edge-set~$\cK_i$, 
we would like to similarly bound~$\ci(\cK_i) = O(\Delta(\cK_i))$, but there is a major obstacle here. 
Namely, as discussed, such Pippenger--Spencer type coloring results only apply to hypergraphs with edges of constant size, 
and their proofs are hard to extend to hypergraphs with edges of size~$O(\log n)$ such as~$\cK_i$.  
We overcome this technical obstacle by exploiting that~$\cK_i$ is %in fact 
a random collection of cliques from~$G_i$. 
Crucially, this enables us to bound~$\ci(\cK_i)$ using 
our new probabilistic Pippenger--Spencer type result \refT{thm:ps:random}, 
which efficiently colors such random hypergraphs with large edges. % using a random greedy algorithm. 
In view of~\eqref{eq:ps:random:case}, it thus becomes plausible that~whp
\begin{equation}
\label{eq:sketch:cP:col2}
\ci(\cP) \; \le \; \sum_{0 \le i < I}O\bigpar{\Delta(\cK_i)} \: + \: O\bigpar{\Delta(G_I)} ,
\end{equation}
where the pseudo-random properties are key for verifying the technical assumptions %~\eqref{as:degcodeg} 
of~\refT{thm:ps:random}. 
Using again pseudo-randomness to estimate~$\Delta(\cK_i)$ and~$\Delta(G_I)$, 
it turns out\footnote{Heuristically, the form of the upper bound~\eqref{eq:sketch:cP:col3} can be motivated as follows:
 \eqref{eq:sketch:Gi} and~$G_i \approx \Gnpp{p_i}$ loosely suggest $\ccc(\Gnp) \le \sum_{0 \le i \le I}\ccc(\Gnpp{p_i})$, 
which together with~\eqref{eq:thickCI:bounds} and~$\ccc(\Gnpp{p_I}) \le 2\Delta(\Gnpp{p_I}) = O(np_I)$ makes~\eqref{eq:sketch:cP:col3} a natural target~bound.}
that whp
\begin{equation}
\label{eq:sketch:cP:col3}
\ci(\cP) \; \le \; \sum_{0 \le i < I}O\biggpar{\frac{np_i}{\log_{1/p_i}n}} \: + \: O\bigpar{np_I} \; \le \; \cdots \; \le \; O\biggpar{\frac{np}{\log_{1/p} n}} ,
\end{equation}
where the exponentially decreasing edge-probabilities~$p_i$ 
will ensure that in estimate~\eqref{eq:sketch:cP:col3} the bulk of the contribution comes %(up to constant factors) 
from the case~$i=0$ with~$p_0=p$;  
see~\refS{sec:packing:CI} for the details.
Finally, the whp size estimate~$|\cP|=O\bigpar{n^2p/(\log_{1/p}n)^2}$ 
can be obtained in a similar but simpler way; see~\refS{sec:Psize}. 

%Finally, bounding the size of the clique partition~$\cP$ is similar, but much simpler. 
%Indeed, exploiting pseudo-random properties of the~$G_i$, 
%analogously to~\eqref{eq:sketch:cP:col1}--\eqref{eq:sketch:cP:col3} we eventually obtain that whp 
%\begin{equation}
%\label{eq:sketch:cP:size1}
%|\cP| \; \le \; \sum_{0 \le i < I}|\cK_i| \: + \: |E(G_I)| \; \le \; O\biggpar{\sum_{0 \le i < I}\frac{n^2p_i}{(\log_{1/p_i}n)^2}} + O\bigpar{n^2p_I} \; \le \; \cdots \; \le \; O\biggpar{\frac{n^2p}{(\log_{1/p} n)^2}} ,
%\end{equation}
%where the main contribution again comes from the case~$i=0$ with~$p_0=p$; see \refS{sec:Psize} for the~details. 

\subsubsection{Technical result: weakly pseudo-random clique partition} %  of random graphs
As we shall see in~\refS{sec:nibble}, the outlined proof strategy gives the following technical result 
for large edge-probabilities~$p=p(n)$.  
%In words, 
\refT{thm:mainpacking} intuitively guarantees that the random graph~$\Gnp$ has 
a weakly pseudo-random clique partition~$\cP$, 
i.e., which simultaneously has small size, thickness and chromatic index. 
\begin{theorem}\label{thm:mainpacking}% [Weakly pseudo-random clique partition of random graphs]
There is a constant~$\alpha>0$ so that, for every fixed real~${\gamma \in (0,1)}$, 
there are constants~${B,C>0}$ such that the following holds. 
If the edge-probability~${p=p(n)}$ satisfies~${n^{-\alpha} \le p \le 1-\gamma}$, 
then whp there exists a clique partition~$\cP$ of the edges of~$\Gnp$ satisfying 
${\max_{K \in {\cP}}|K| \le \log_{1/p} n}$, \linebreak[3]
${|\cP| \le B n^2p/(\log_{1/p} n)^2}$ and 
${\Delta(\cP) \le \ci(\cP) \le C np/\log_{1/p}n}$. 
\end{theorem}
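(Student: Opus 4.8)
The plan is to realize the two-step strategy of~\refS{sec:strategy}: first build a clique partition of~$\Gnp$ by a semi-random nibble whose intermediate graphs stay pseudo-random, and then estimate its size, thickness and chromatic index using that pseudo-randomness together with the probabilistic Pippenger--Spencer result~\refT{thm:ps:random}.

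\textbf{Step~1: the nibble.} Fix a geometrically decreasing sequence of target densities~$p_0=p>p_1>\dots>p_I$, say~$p_{i+1}=p_i/2$, stopped at the first index~$I$ with~$p_I\le n^{-\beta}$, where~$\beta=\beta(\gamma)\in(0,1)$ is chosen small enough (together with the constants below) that the hypotheses of~\refT{thm:ps:random} remain available at every round~$i<I$ and that~$\Delta(\Gnpp{p_I})=O(np_I)$ stays below the target bound. If~$p\le n^{-\beta}$ already, then~$I=0$, we take~$\cP=E(\Gnp)$, and all three bounds follow in Step~2 from Vizing's theorem and the whp estimates~$|E(\Gnp)|=\Theta(n^2p)$ and~$\Delta(\Gnp)=\Theta(np)$ (here~$\alpha\le\tfrac12$ guarantees~$\log_{1/p}n\ge 2\ge\max_{K}|K|$); so assume~$p>n^{-\beta}$. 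Starting from~$G_0=\Gnp$, in round~$i$ I would obtain~$G_{i+1}$ from~$G_i$ by deleting the edges of a random collection~$\cK_i$ of edge-disjoint cliques of size~$k_i:=\lceil\log_{1/p_i}n\rceil$, produced by a sequence of Pippenger--Spencer/R\"{o}dl nibble sub-steps (in the spirit of Frieze--Reed~\cite{FR1995} and Guo--Warnke~\cite{GW2020}) that together delete a~$(1-o(1))$-fraction of the edges at a typical vertex, so that~$G_{i+1}$ has density~$\approx p_{i+1}$. The crux, which I would isolate as the separate pseudo-randomness statement~\refT{thm:pseudorandom}, is that whp every~$G_i$ ``looks like''~$\Gnpp{p_i}$ in the relevant parameters: degrees~$(1\pm n^{-\Omega(1)})np_i$, pairwise codegrees~$O(np_i^2)$, and, crucially, the number of~$k_i$-cliques of~$G_i$ through any fixed vertex equals~$(1\pm n^{-\Omega(1)})D_i$ with~$D_i:=\binom{n-1}{k_i-1}p_i^{\binom{k_i}{2}}$, while the number through any fixed pair of vertices is~$n^{-\Omega(1)}D_i$. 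This is proved by tracking the pertinent clique counts through the nibble with the differential-equation method and Azuma/Freedman-type concentration, using the ``extra wrinkles'' of the algorithm to enforce the upper-tail bounds deterministically as the process runs.

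\textbf{Step~2: counting and coloring.} Set~$\cK_I:=E(G_I)$ (a family of~$2$-cliques) and~$\cP:=\cK_0\cup\dots\cup\cK_I$; by construction this is a clique partition of~$E(\Gnp)$ with~$\max_{K\in\cP}|K|\le\max_i k_i\le\log_{1/p}n$, using~$\log(1/p_i)\ge\log(1/p)$. For the size, pseudo-randomness gives~$|\cK_i|=\Theta\bigpar{n^2p_i/k_i^2}$ for~$i<I$ and~$|\cK_I|=|E(G_I)|=O(n^2p_I)$; since~$x\mapsto x/(\log_{1/x}n)^2$ is increasing and the~$p_i$ decay geometrically, the sum over~$i$ is dominated by the~$i=0$ term and the~$O(n^2p_I)$ contribution is negligible, so~$|\cP|=O\bigpar{n^2p/(\log_{1/p}n)^2}$. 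For the chromatic index I would start from
\begin{equation*}
\ci(\cP)\;\le\;\sum_{0\le i<I}\ci(\cK_i)\;+\;\ci(\cK_I)\;\le\;\sum_{0\le i<I}\ci(\cK_i)\;+\;2\Delta(G_I),
\end{equation*}
the last step by Vizing's theorem. Each~$\cK_i$ is a union of the edge-disjoint survivors of its nibble sub-steps, each of which picks i.i.d.\ uniform cliques of (the current approximation to)~$G_i$; viewing such a batch as the random subhypergraph~$\cH_m$ with~$\cH=\cH^{(i)}$ the~$k_i$-uniform clique hypergraph of~$G_i$, the pseudo-randomness of~$G_i$ verifies the hypotheses of~\refT{thm:ps:random}: $\cH^{(i)}$ is~$n^{-\Omega(1)}$-nearly-$D_i$-regular with codegrees~$n^{-\Omega(1)}D_i$, the batch size~$m$ lies in~$[n^{1+\sigma},n^{k_i n^{\sigma/5}}]$ once the nibble density is set appropriately, and~$k_i\le\bigpar{1/\log(1/(1-\gamma))}\log n\le\balp\log n$ for a~$\gamma$-dependent~$\balp$, so that~$\balp\le\delta\sigma/30$ can be arranged with~$\sigma=\sigma(\gamma)$ and a~$\gamma$-dependent~$\delta$. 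Since~$\ci$ does not increase when we pass to the edge-disjoint survivors, summing the per-batch bounds from~\refT{thm:ps:random} and~\refR{rem:ps:random} gives~$\ci(\cK_i)=O\bigpar{\Delta(\cK_i)}=O\bigpar{np_i/\log_{1/p_i}n}$ whp, with a~$\gamma$-dependent implied constant. Summing over~$i$ (once more using that the~$p_i$ decay geometrically), while~$2\Delta(G_I)=O(np_I)$ is below the target by the choice of~$\beta$, yields
\begin{equation*}
\ci(\cP)\;\le\;\sum_{0\le i<I}O\biggpar{\frac{np_i}{\log_{1/p_i}n}}\;+\;O(np_I)\;\le\;C\,\frac{np}{\log_{1/p}n}
\end{equation*}
whp; since trivially~$\Delta(\cP)\le\ci(\cP)$, all three required bounds follow, and a union bound over the (polynomially many) nibble sub-steps keeps the whole construction whp.

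\textbf{Main obstacle.} The hard part is Step~1: controlling the~$k_i$-clique counts of the nibble-generated graphs when~$k_i$ grows like~$\log n$, since the one-step increments are then no longer~$O(1)$ and the usual nibble martingale estimates must be replaced by concentration arguments adapted to~$\omega(1)$-size cliques. A secondary, more bookkeeping-flavoured difficulty is the joint choice of the constants~$\alpha,\beta,\sigma,\delta,\balp$ so that the codegree hypothesis~$np_i\gg k_i n^{\sigma}$ and the lower bound~$m\ge n^{1+\sigma}$ in~\refT{thm:ps:random} hold for every round~$i<I$ while still forcing~$\Delta(G_I)=O(np_I)$ below~$Cnp/\log_{1/p}n$.
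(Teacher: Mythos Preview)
Your plan follows the paper's two-step strategy, and the coloring logic of Step~2 (Vizing on $\cK_I$, Theorem~\ref{thm:ps:random} on each earlier round, geometric summation over~$i$) is essentially what the paper carries out in Section~\ref{sec:packing:CI}. There is, however, one concrete parameter choice in your proposal that would block the argument as written.

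\textbf{The clique size $k_i=\lceil\log_{1/p_i}n\rceil$ is too large for the pseudo-randomness step.} The paper takes $k_i=\lceil\sigma\log_{1/p_i}n\rceil$ with the small absolute constant $\sigma=1/9$, and this is not cosmetic. To verify the near-regularity hypothesis~\eqref{as:degcodeg} for the $k_i$-clique hypergraph with polynomial error $n^{-\Omega(1)}$, the paper (Theorem~\ref{thm:mainnibble}) tracks the common-neighborhood sizes $N_{S,i}$ for all $|S|\le k_i-1$ through the nibble and rebuilds every clique count $|\cC_{S,j,i}|$ as a telescoping product of such terms. The concentration step (Lemmas~\ref{lem:N0}--\ref{lem:Ni}) needs $\E N_{S,i}\approx np_i^{|S|}$ to be polynomially large for \emph{every} relevant~$S$; the paper's choice guarantees $p_i^{k_i-1}\ge n^{-\sigma}$, hence $np_i^{k_i-1}\ge n^{1-\sigma}$. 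With your $k_i$ one has $p_i^{k_i}\approx 1/n$, so $np_i^{k_i-1}\approx 1/p_i=O(1)$ for constant $p_i$: the top-level common neighborhoods do not concentrate, the product formula cannot deliver relative error $n^{-\Omega(1)}$, and attempting to track $k_i$-clique degrees directly through the nibble runs into Lipschitz constants that are too large for the bounded-differences inequality (Lemma~\ref{thm:BDI}). The fix is simply to use cliques of size a small constant fraction of $\log_{1/p_i}n$; this costs only constant factors in the final bounds and still gives $\max_{K\in\cP}|K|\le\log_{1/p}n$.

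\textbf{Step size, stabilization, and leftovers.} A second, more structural difference: the paper does not halve the density via sub-nibbles but takes a single tiny step per round, $p_{i+1}=p_i e^{-1/k^{\tau}}$ with $\tau=9$ and $I=\Theta(k^{\tau}\log k)$ rounds. Each round samples one i.i.d.\ batch $\Gamma_i$ of $k_i$-cliques and then removes \emph{all} of $E(\Gamma_i)$ from $G_i$ (not just the edges of the edge-disjoint survivors $\Gamma_i^*$), together with a set $S_i$ of ``stabilization'' edges whose inclusion probabilities $\zeta_{e,i}$ are tuned so that every edge of $G_i$ is deleted with exactly the same conditional probability; this is the ``extra wrinkle'' that makes the inductive step in Lemma~\ref{lem:Ni} clean. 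The leftover edges $D_i:=E(\Gamma_i)\setminus E(\Gamma_i^*)$ and $S_i\setminus E(\Gamma_i)$ are then added to $\cP$ as $2$-cliques, and their contributions to $\ci(\cP)$ and $|\cP|$ are bounded by separate elementary tail estimates (Lemma~\ref{lem:UT} for $D_i$, Chernoff for $S_i$). Your scheme of deleting only the survivor edges and leaving conflict edges in $G_{i+1}$ is not wrong in principle, but it forces the pseudo-randomness analysis to absorb that bias, and you will still need some stabilization device to equalize per-edge removal probabilities before the common-neighborhood martingale argument can go through.
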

\begin{remark}
The proof shows that the whp conclusion in fact holds with probability at least~$1-n^{-\omega(1)}$. 
\end{remark}
\noindent
After potentially increasing the constants~$B,C>0$, 
this theorem readily implies the upper bounds in~\eqref{eq:packing:bounds} and~\eqref{eq:thickCI:bounds} of Theorems~\ref{thm:packing}--\ref{thm:thickCI}, 
since for smaller edge-probabilities~$p=p(n) \le n^{-\alpha}$ the trivial clique partition ${\cP:=E(\Gnp)}$ consisting of all edges of~$\Gnp$ 
easily\footnote{Using well-known estimates, it is easy to see that whp~$|\cP| = |E(\Gnp)|\sim \binom{n}{2}p < 2\alpha^{-2} \cdot \binom{n}{2}p/(\log_{1/p}n)^2$ for~$n^{-2} \ll p \le n^{-\alpha}$ and whp~$\ci(\cP) = \ci(\Gnp) \le \Delta(\Gnp)+1 \sim np < 2\alpha^{-1} \cdot np/(\log_{1/p}n)$ for~$n^{-1}\log n \ll p \le n^{-\alpha}$.} 
gives the desired bounds due to~$1 \le 1/(\alpha \log_{1/p} n)$.

\subsection{Organization}
%
%The remainder of this paper is organized as follows.
In~\refS{sec:nibble} we prove our main technical clique partition result \refT{thm:mainpacking} 
(which as discussed implies Theorems~\ref{thm:prague}, \ref{thm:packing} and~\ref{thm:thickCI}), 
by analyzing a semi-random greedy clique partition algorithm 
using concentration inequalities and our new chromatic index result \refT{thm:ps:random}. 
We then prove \refT{thm:ps:random} in~\refS{sec:edgechr}, 
by analyzing a natural random greedy edge coloring algorithm using the differential equation method. % approach to dynamic concentration.  
The final \refS{sec:concl} discusses some open problems, 
sharpens the lower bounds of Theorems~\ref{thm:packing}--\ref{thm:thickCI} for constant edge-probabilities~$p$, 
and also records strengthenings of Theorems~\ref{thm:packing}--\ref{thm:thickCI} for many small edge-probabilities~${p=p(n) \to 0}$.

\section{Semi-random greedy clique partition algorithm}\label{sec:nibble}
In this section we prove \refT{thm:mainpacking} (and thus Theorems~\ref{thm:prague}, \ref{thm:packing} and~\ref{thm:thickCI}, see Sections~\ref{sec:cp}--\ref{sec:strategy}) 
by showing that a certain semi-random greedy algorithm is likely to find the desired clique partition~$\cP$ of the binomial random graph~$\Gnp$. 
This algorithm iteratively adds cliques to~$\cP$, and the main idea is as follows. 
Writing $G_i \subseteq \Gnp$ for the subgraph containing all edges of~$\Gnp$ 
which are edge-disjoint from the cliques added to~$\cP$ during the first~$i$~iterations, 
we randomly sample a collection~$\Gamma_i$ of cliques from~$G_i$ (of suitable size~$k_i$).   
We then alter this collection to ensure that there are no edge-overlaps between the cliques, 
and add the resulting edge-disjoint collection~$\Gamma_i^* \cup D_i$ of cliques to~$\cP$. 
Finally, after a sufficiently large number of~$I$ iterations, 
we add all remaining so-far uncovered edges of~$G_I \subseteq \Gnp$ to~$\cP$ (as cliques of size~two). 

In fact, we shall use  an additional wrinkle for technical reasons: 
in each iteration of the algorithm we add an extra set~$S_i$ of random edges to~$\cP$, 
which helps us to ensure that the graphs~$G_i=([n],E_i)$ stay pseudo-random, 
i.e., resemble a random graph~$\Gnpp{p_i}$ with suitable decaying edge-probabilities~$p_i$. 
% (with respect to various clique~statistics, see~\refT{thm:pseudorandom}). 

\subsection{Details of the semi-random `nibble' algorithm}\label{sec:alg:details}
Turning to the technical details of our clique partition algorithm, let 
\begin{align}\label{eq:defs}
    k := \bigceil{\ca\log_{1/p} n}
, \quad 
I := \bigceil{\cc k^\cc \log k}
, \quad 
p_i := p e^{-i/k^\cc}
, \quad 
k_i := \bigceil{\ca\log_{1/p_i} n}
, \quad 
\eps := n^{-\beps} ,
\end{align}
where we fix the absolute constants~$\ca:=1/9$ and~$\cc:=9$ for concreteness 
(we have made no attempt to optimize these constants, and the reader looses little by simply assuming that $\ca$ and~$\cc$ are always sufficiently small and large, respectively, whenever needed). 
For any vertex-subset~$S \subseteq [n]$ with~$|S| \le j$ we define
\begin{align}\label{eq:Cji}
    \cC_{S, j, i} := \lrset{J \subseteq V \:\colon\: S \subseteq J,\; \abs{J} = j,\;  \tbinom{J}{2} \setminus \tbinom{S}{2} \subseteq E_i}.
\end{align}
In words, if~$S$ forms a clique in the graph~$G_i=([n],E_i)$, then $\cC_{S, j, i}$ corresponds to the set of all $j$-vertex cliques of $G_i$ that contain~$S$. 
Furthermore, if~$G_i$ indeed heuristically resembles the random graph~$\Gnpp{p_i}$ (as suggested above, and later made precise by \refT{thm:pseudorandom}), 
then we expect that~$|\cC_{S, j, i}| \approx \mu_{|S|,j,i}$,~where 
%the expected size of~$\cC_{S, j, i}$ should be about~$\mu_{|S|,j,i}$,~where 
\begin{align}\label{eq:musji}
    \mu_{s,j,i} := \binom{n-s}{j-s} p_i^{\binom{j}{2} - \binom{s}{2}}.
\end{align}
Writing~$E(\cC) := \bigcup_{K \in \cC} E(K)$ for the edges covered by a family~$\cC$ of cliques, 
after defining %the probabilities 
\begin{align}\label{def:qi:zeta}
q_i := \frac{1}{(1 + \eps) k^\cc \mu_{2, k_i, i} } 
\quad \text { and } \quad  
\zeta_{e,i} := 1 - (1-q_i)^{\max\set{(1+\eps)\mu_{2,k_i,i} - |\cC_{e, k_i, i}|,\; 0}} 
\end{align}
we now formally state the algorithm that finds the desired clique partition~$\cP$ of~$\Gnp$.  
%
% H,t
\begin{algorithm}[H]\caption{{\bfseries Algorithm: Semi-random greedy clique partition}}\label{alg:semi} %  algorithm
\begin{algorithmic}[1]
\STATE Set~$\cP_0 := \emptyset$ and~$G_0 := ([n], E_0)$, where~$E_0 := E(\Gnp)$.
\FOR{$i = 0$ \textbf{to} $I-1$}
		\STATE Let~$\cC_i:=\cC_{\emptyset, k_i, i}$ contain all $k_i$-vertex cliques of~$G_i$. 
    \STATE Generate~$\Gamma_i \subseteq \cC_i$: independently include each clique~$K \in \cC_i$  with probability~$q_i$. 
    \STATE Generate~$S_i \subseteq E_i$: independently include each edge~$e \in E_i$  with probability~$\zeta_{e,i}$. 
      \STATE Let~$\Gamma_i^*$ be a size-maximal collection of edge-disjoint $k_i$-vertex cliques in $\Gamma_i$.
      \STATE Set~$\cP_{i+1} := \cP_i \cup \Gamma_i^* \cup D_i \cup \bigpar{S_i \setminus E(\Gamma_i)}$, where~$D_i := E(\Gamma_i) \setminus E(\Gamma_i^*)$.
    \STATE Set~$G_{i+1} := ([n], E_{i+1})$, where~$E_{i+1} := E_i \setminus \bigpar{E(\Gamma_i) \cup S_i}$.
\ENDFOR
\STATE Return~$\cP := \cP_{I} \cup E_I$.
\end{algorithmic}
\end{algorithm}%
%\noindent 

One may heuristically motivate the technical definitions~\eqref{def:qi:zeta} of~$q_i$ and~$\zeta_{e,i}$ as follows. 
The `inclusion' probability~$q_i$ will intuitively ensure that, for any fixed edge~$e \in E_i$, the expected number of cliques in~$\Gamma_i$ containing~$e$ is roughly~$|\cC_{e, k_i, i}| \cdot q_i \approx \mu_{2, k_i, i} \cdot q_i \approx 1/k^{\cc}$. % k^{-\cc} 
This makes it plausible that the cliques in~$\Gamma_i$ are largely edge-disjoint, i.e., that~$|\Gamma_i^*| \approx |\Gamma_i|$. 
The `stabilization' probability~$\zeta_{e,i}$ will intuitively ensure~that 
\[
\P(e \in E_{i+1} \mid e \in E_i) = (1 - q_i)^{|\cC_{e, k_i, i}|} \cdot (1 - \zeta_{e,i}) 
%= (1-q_i)^{\max\set{(1+\eps)\mu_{2,k_i,i},\; |\cC_{e, k_i, i}|}}
\approx (1 - q_i)^{(1 + \eps) \mu_{2, k_i, i}} 
\approx e^{-1/k^\cc} .
\] 
Since all edges~$e \in E_i$ of~$G_i$ have roughly the same probability of appearing in~$E_{i+1}$, 
it then inductively becomes plausible that~$G_{i+1}$ resembles 
a random graph~$\Gnpp{p_{i+1}}$ with edge-probability~$p_{i+1} \approx p_i \cdot e^{-1/k^\cc}$. % = p e^{(i+1)/k^\cc}

\subsection{The clique partition~$\cP$: proof of \refT{thm:mainpacking}}
In this section we prove \refT{thm:mainpacking} by analyzing the clique partition~$\cP$ produced by the semi-random greedy algorithm. 
Recalling the definitions~\eqref{eq:Cji}--\eqref{eq:musji} of~$|\cC_{S, j, i}|$ and~$\mu_{s, j, i}$, 
\refT{thm:pseudorandom} confirms our heuristic that~$G_i$ stays pseudo-random, 
i.e., resembles the random graph~$\Gnpp{p_i}$ with respect to various clique statistics. 
%(For technical reasons \refT{thm:pseudorandom} controls cliques up to size~$k_i \approx \ca\log_{1/p}n$, whereas~$\Gnpp{p_i}$ has cliques of size up to~$(2+o(1))\log_{1/p_i}n$.) 
%
\begin{theorem}[Pseudo-randomness of the graphs~$G_i$]\label{thm:pseudorandom}
Let~$p=p(n)$ satisfy~$n^{-\ca/\cc} \le p \le 1-\gamma$, where~$\gamma \in (0,1)$ is a constant.
Then, with probability at least $1 - n^{-\omega(1)}$, for all $0 \leq i \leq I$ the following event $\cX_i$ holds: for all~$S \subseteq V$ and $j$ with $0 \leq |S| \leq j \leq k_i$, we have 
\begin{equation}\label{eq:XSji}
    |\cC_{S, j, i}| \; = \; (1 \pm \eps) \cdot \mu_{|S|, j, i}.
\end{equation}
\end{theorem}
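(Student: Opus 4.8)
\textbf{Proof strategy for \refT{thm:pseudorandom}.}
The plan is to prove the stronger statement that $\cX_i$ holds with high probability by induction on $i$, tracking the conditional evolution of the clique counts $|\cC_{S,j,i}|$ as the algorithm peels off edges. The base case $i=0$ is immediate: $G_0 = \Gnp$ is genuinely a binomial random graph, so a standard concentration argument (e.g.\ a union bound over the $\sum_{j \le k_0}\binom{n}{j}$ choices of $S$ of size at most $k_0$, combined with a one-sided Azuma/Chernoff-type bound for the relevant clique-counting martingale, or a direct second-moment-plus-Janson estimate) gives~\eqref{eq:XSji} for $i=0$ with probability $1-n^{-\omega(1)}$; the point size $k_0=O(\log_{1/p}n)$ keeps the number of events at $n^{O(\log n)}$, which is absorbed by the $n^{-\omega(1)}$ failure probability per event provided the concentration is stretched-exponential in a suitable power of $n$.

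For the inductive step, I would condition on $\cX_i$ (and more precisely on the full history $\cF_i$ up to iteration $i$, assuming the relevant good events so far) and analyze how $|\cC_{S,j,i+1}|$ arises from $|\cC_{S,j,i}|$. A clique $J \in \cC_{S,j,i}$ survives into $\cC_{S,j,i+1}$ iff none of its edges in $\binom{J}{2}\setminus\binom{S}{2}$ lands in $E(\Gamma_i) \cup S_i$; by the heuristic calculation following the algorithm, each edge $e \in E_i$ is removed with probability $\approx 1 - e^{-1/k^\cc}$, designed so that $p_{i+1} \approx p_i e^{-1/k^\cc}$. So the expected survival count is $\approx |\cC_{S,j,i}| \cdot (e^{-1/k^\cc})^{\binom{j}{2}-\binom{s}{2}} \approx \mu_{s,j,i+1}$ by definition~\eqref{eq:musji}. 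The work is to show concentration of the actual survival count around this expectation, tightly enough (relative error $\eps=n^{-\beps}$) and uniformly over all $S,j$, so the inductive $(1\pm\eps)$ window does not degrade. Here one needs: (i) the edge-removal events are close to independent across different edges (the clique-inclusion variables in line~4 are independent, the stabilization edges in line~5 are independent given $\cF_i$; the coupling between them via $\zeta_{e,i}$ is the only source of correlation, and the definitions in~\eqref{def:qi:zeta} are rigged so the marginal removal probability is clean); (ii) a bounded-differences or Talagrand-type inequality for the survival count of cliques containing $S$, where changing the status of one edge or one sampled clique perturbs $|\cC_{S,j,i+1}|$ by a controlled amount — here the pseudo-randomness $\cX_i$ is what bounds the number of $j$-cliques through any given edge by $(1+\eps)\mu_{2,j,i}$, keeping the Lipschitz constants small; (iii) a union bound over $i \le I = O(k^\cc \log k) = O(\operatorname{polylog} n)$, over $j \le k_i$, and over the $\le n^{k_i}$ choices of $S$, all swallowed by the per-event failure probability.

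The main obstacle, I expect, is getting the concentration quantitatively sharp enough to close the induction: because $\eps = n^{-\beps}$ is only polynomially small while we have $n^{O(\log n)}$ events and $O(\operatorname{polylog} n)$ rounds, the one-round deviation must be $o(\eps)$ with failure probability $n^{-\omega(\log n)}$ or so, which forces the concentration inequality to produce a tail like $\exp(-n^{\Omega(1)})$ — and that in turn requires that the "number of $j$-cliques through a fixed edge" (or through a fixed pair of edges, for the variance/Lipschitz bound) is itself large, i.e.\ $\mu_{2,j,i}$ and especially $\mu_{2,k_i,i}$ are at least a small power of $n$. This is exactly why the hypothesis $p \ge n^{-\ca/\cc}$ and the choices $\ca = 1/9$, $\cc = 9$, $k_i = \lceil \ca \log_{1/p_i} n\rceil$ appear: they guarantee $p_i^{\binom{k_i}{2}} \ge n^{-\Omega(1)}$ throughout all $I$ rounds, so $\mu_{2,k_i,i} = n^{\Omega(1)}$, giving enough "room" for large-deviation concentration with a stretched-exponential tail. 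A secondary technical nuisance is handling the full range of $j$ simultaneously (small $j$, where counts are enormous and concentration is easy, versus $j$ near $k_i$, where counts are smallest and the bound is tightest) with a single argument, and making sure the exceptional events from the clique-sampling steps (line~4, the event $|\Gamma_i| \approx \sum_{K}q_i$ and that cliques are nearly edge-disjoint) are also controlled at the $n^{-\omega(1)}$ level and fed into the conditioning — but these should follow from the same pseudo-randomness package plus routine Chernoff bounds.
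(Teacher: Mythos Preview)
Your inductive skeleton (base case $i=0$ via standard concentration in~$\Gnp$, inductive step via a bounded-differences inequality conditioned on the good event, union bound over $n^{O(k)}$ sets and $I=n^{o(1)}$ rounds) matches the paper's, but the paper does \emph{not} track the clique counts $|\cC_{S,j,i}|$ directly. Instead it tracks the much simpler common-neighbour counts
\[
N_{S,i}:=\bigl|\{w\in V\setminus S:\ S\times\{w\}\subseteq E_i\}\bigr|,
\]
proves $N_{S,i}=(1\pm (i+1)\eps^2)(n-|S|)p_i^{|S|}$ for all $|S|\le k_i-1$, and then recovers~\eqref{eq:XSji} by writing $(j-|S|)!\cdot|\cC_{S,j,i}|$ as a telescoping product of neighbour counts (sequentially extending $S$ one vertex at a time). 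This reduction is the main idea you are missing, and it buys two concrete things.

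First, the Lipschitz constants become trivial: flipping one edge $e\in E_i$ changes $N_{S,i+1}$ by at most~$1$, and flipping one sampled clique $K\in\cC_i$ changes it by at most $\binom{k_i}{2}\le k^2$. In your direct approach, flipping an edge $e$ can kill up to $|\cC_{S\cup e,j,i}|\approx\mu_{|S|+2,j,i}$ many $j$-cliques, and flipping a clique up to $k^2\mu_{|S|+2,j,i}$; these constants are polynomially large in~$n$, not ``small'', and pushing a bounded-differences bound through with them (uniformly in $j\le k_i$) is exactly the mess the paper sidesteps. Second, the paper's per-round error is $\eps^2$ rather than $\eps$, so after $I=n^{o(1)}$ rounds the accumulated error $(I+1)\eps^2\ll\eps$ still fits inside the target window; your remark that ``the inductive $(1\pm\eps)$ window does not degrade'' needs precisely such a sub-$\eps$ per-round increment, which you do not set up.
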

We defer the proof of this technical auxiliary result to \refS{sec:mainnibble}, 
and first use it (together with our new edge-coloring result \refT{thm:ps:random}) to prove \refT{thm:mainpacking} with~$\alpha := \ca/\cc$. 
To this end, we henceforth tacitly assume~$n^{-\ca/\cc}\le p \le 1-\gamma$. 
In particular, for~$0 \le i \le I$ it then is routine to check that 
\begin{equation}\label{eq:pi:ki}
%k \ge k_i \ge \frac{a \log n}{\log(k^8/p)} \ge M/2 
8 < \tau-o(1) \le \frac{\ca \log n}{\log(k^{2\cc}/p)} \le k_i \le k \le n^{o(1)} 
\qquad \text{ and } \qquad 
\min_{0 \le s \le k_i-1} p_i^s \ge p_i^{k_i-1} \ge n^{-\ca} .
\end{equation}
By construction of~$\cP$, it also follows that~$\max_{K \in \cP}|K| \le k \le \log_{1/p}n$. 
To complete the proof of~\refT{thm:mainpacking}, 
it thus remains to bound the size and chromatic index of the clique partition~$\cP$, respectively.

\subsubsection{Chromatic index of~$\cP$}\label{sec:packing:CI} 
We first focus on the chromatic index of the clique partition~$\cP$, 
which is easily seen to be 
(by separately coloring different subsets of the cliques, using disjoint sets of colors)  
at most 
\begin{equation}
\label{eq:P:cbound:1}
\ci(\cP) \leq \sum_{0 \leq i \leq I-1} \Bigpar{\ci(\Gamma_i) + \ci(D_i) + \ci(S_i)} + \ci(E_I).
\end{equation}
For~$\cS \in \{\Gamma_i,D_i,S_i,E_I\}$, let $\cS^{(v)} \subseteq \cS$ denote the subset of cliques that contain the vertex $v$.
Since the cliques in $D_i, S_i,E_I$ are all simply edges, 
using Vizing's theorem it follows that 
\begin{equation}
\label{eq:P:cbound:2}
\ci(\cP) \leq \sum_{0 \leq i \leq I-1} \Bigpar{\ci(\Gamma_i) + \max_{v \in [n]}\bigabs{D^{(v)}_i} + \max_{v \in [n]}\bigabs{S^{(v)}_i}} + \max_{v \in [n]}\bigabs{E^{(v)}_I} + (2I+1).
\end{equation}

In the following we bound the contributions of each of these terms, 
and we start with the main term~$\ci(\Gamma_i)$. 
Gearing up to apply our new Pippenger--Spencer type chromatic index result \refT{thm:ps:random} to the random set~$\Gamma_i \subseteq \cC_i$ of cliques, 
let ${\cH := ([n], \cC_i)}$ denote the \mbox{$k_i$-uniform} auxiliary hypergraph consisting of all \mbox{$k_i$-vertex} cliques in~$G_i$. 
Note that~$\Gamma_i$ has the same distribution as the edge-set of~$\cH_{q_i}$,  
%Note that a proper coloring of~$\Gamma_i$ is equivalent to a proper edge-coloring of~$\cH_{q_i}$, 
where the random subhypergraph~$\cH_{q_i} \subseteq \cH$ is defined as in \refC{c:ps:random} with $r=k_i$ and $q=q_i$ 
(we defer the proof of \refC{c:ps:random} to \refApp{apx:lower}, since this standard reduction to \refT{thm:ps:random} is rather tangential to the main argument~here). 
%(we defer the routine reduction of \refT{thm:ps:random} to \refC{c:ps:random} to \refApp{apx:lower}, since this is rather tangential to the main argument~here).
%
\begin{corollary}[Convenient variant of \refT{thm:ps:random}]\label{c:ps:random}
There is~${\xi=\xi(\delta)>0}$ such that if the assumptions of \refT{thm:ps:random} hold for a given~$n$-vertex $r$-uniform hypergraph~$\cH$, 
with assumption~${m \le n^{r n^{\sigma/5}}}$ replaced by~${m \le\xi e(\cH)}$,   
then we have $\Pr(\ci(\cH_q) \le {(1+2\delta)r m/n}) \ge 1-n^{-\omega(r)}$, 
where~$\cH_q$ denotes the random subhypergraph of~$\cH$ where each edge~$e \in \cH$ is independently included with probability~${q:=m/|E(\cH)|}$. 
\end{corollary}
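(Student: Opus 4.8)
The corollary replaces the awkward technical assumption $m \le n^{r n^{\sigma/5}}$ in Theorem~\ref{thm:ps:random} with the cleaner requirement $m \le \xi e(\cH)$, at the (minor) price of a weaker constant ($1+2\delta$ instead of $1+\delta$) and a slightly different probability bound ($n^{-\omega(r)}$ instead of $m^{-\omega(r)}$). The plan is a straightforward reduction: since $m \le \xi e(\cH)$ and $e(\cH) \le n^r$, the hypothesis $m \le n^{r n^{\sigma/5}}$ of Theorem~\ref{thm:ps:random} holds automatically whenever $\xi \le 1$, so the only real work is (i)~passing from the i.i.d.-with-replacement model $\cH_m$ of Theorem~\ref{thm:ps:random} to the independent-inclusion model $\cH_q$ of the corollary, and (ii)~absorbing the discrepancy between the actual (random) number of edges of $\cH_q$ and the target quantity $m$.

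\textbf{Step 1: From independent inclusion to i.i.d.\ sampling.} Let $N := e(\cH)$ and $q := m/N$. I would first condition on the number of edges $M := e(\cH_q) \sim \Bin(N,q)$. Conditioned on $M = m'$, the edge-set of $\cH_q$ is a uniformly random $m'$-subset of $E(\cH)$; this is stochastically dominated (for the purpose of chromatic index, which is monotone under adding edges) by the multiset $\{e_1,\dots,e_{m''}\}$ obtained by sampling $m''$ edges i.i.d.\ uniformly with replacement, for any $m'' \ge m'$ — more carefully, a uniform $m'$-subset is exactly the set of distinct values among $m''$ i.i.d.\ uniform samples conditioned on there being $m'$ distinct values, and since $\ci$ only depends on the set of edges present, $\ci(\cH_q) \mid \{M=m'\}$ is dominated by $\ci(\cH_{m''})$ whenever $m'' \ge m'$. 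So it suffices to bound $\ci(\cH_{m''})$ for a suitable deterministic $m''$ that dominates $M$ with high probability.

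\textbf{Step 2: Choosing $m''$ and $\xi$, and applying Theorem~\ref{thm:ps:random}.} By a Chernoff bound, $M = \Bin(N,q) \le (1 + \delta/10) m$ except with probability $\exp(-\Omega(m)) = \exp(-\Omega(n^{1+\sigma})) = n^{-\omega(r)}$ (using $m \ge n^{1+\sigma}$ and $r \le \balp\log n$). Set $m'' := \lceil(1+\delta/10)m\rceil$. I then need $m''$ to still satisfy the hypotheses of Theorem~\ref{thm:ps:random}: the lower bound $m'' \ge n^{1+\sigma}$ is immediate, and the upper bound $m'' \le n^{r n^{\sigma/5}}$ holds as noted since $m'' \le 2\xi N \le 2\xi n^r \le n^r$ for $\xi \le 1/2$. (The choice of $\xi = \xi(\delta)$ is essentially free here; one can even take $\xi = 1/2$, but keeping it as a small constant depending on $\delta$ costs nothing and leaves room should a sharper version be wanted.) Theorem~\ref{thm:ps:random} then gives $\ci(\cH_{m''}) \le (1+\delta)\,r m''/n \le (1+\delta)(1+\delta/10)\,rm/n \le (1+2\delta)\,rm/n$ except with probability $(m'')^{-\omega(r)} = n^{-\omega(r)}$. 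Combining with Step~1 via a union bound over the failure event $\{M > m''\}$ and the failure event of Theorem~\ref{thm:ps:random} yields $\Pr(\ci(\cH_q) \le (1+2\delta)rm/n) \ge 1 - n^{-\omega(r)}$, as claimed.

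\textbf{Main obstacle.} There is no deep obstacle — this is a routine bridging lemma — but the one point requiring genuine care is the stochastic-domination argument in Step~1: one must be careful that the with-replacement sample $\cH_{m''}$ really does dominate the without-replacement (equivalently, independent-inclusion) sample in the relevant sense. The cleanest way is the coupling: generate $m''$ i.i.d.\ uniform edges, let $\cH_{m''}$ be that multiset and let $A \subseteq E(\cH)$ be the set of distinct edges appearing; then $E(A) = E(\cH_{m''})$ as edge-sets, and for $\cH_q$ one can couple so that (conditioned on $M \le m''$) its edge-set is a subset of $A$ — e.g.\ by revealing $M$ distinct edges among the samples. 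Since $\ci$ is monotone under edge inclusion, domination follows. Once this is set up correctly, everything else is Chernoff bounds and arithmetic with the constants, which I would relegate to a short paragraph. I would also remark that the reduction only uses monotonicity of $\ci$ and the crude bound $e(\cH) \le \binom{n}{r} \le n^r$, so it is robust.
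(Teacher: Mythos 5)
Your overall plan — reduce to the i.i.d.\ model $\cH_{m''}$ via a coupling through the uniform-$m'$-subset model, after truncating $|E(\cH_q)|$ by Chernoff, and note that $m \le \xi e(\cH) \le \xi n^r$ makes the upper bound on $m$ in \refT{thm:ps:random} automatic — is the right approach and is essentially what the paper does. However, there is a genuine gap in Step~1 that propagates into your choice of $m''$ in Step~2.

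The specific claim ``$\ci(\cH_q)\mid\{M=m'\}$ is dominated by $\ci(\cH_{m''})$ whenever $m''\ge m'$'' is false as stated, and the coupling you sketch does not establish it. Your coupling requires the number of \emph{distinct} edges among $m''$ i.i.d.\ samples to be at least $M$; conditioning on $M\le m''$ does not ensure this, since the $m''$ samples typically contain repeats. Concretely: take $\cH$ a triangle plus one disjoint edge ($e(\cH)=4$) and $m'=m''=4$. Then $\ci(\cH_4^*)=3$ deterministically (it is the full hypergraph), while the i.i.d.\ outcome $(12,12,45,45)$ gives a multiset with $\ci=2$, so $\ci(\cH_4^*)\not\le_{\mathrm{st}}\ci(\cH_4)$. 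The same phenomenon, attenuated but not eliminated, occurs in the regime $m''\ll e(\cH)$: the expected number of distinct samples among $m''$ i.i.d.\ draws is roughly $m''(1-m''/(2e(\cH)))<m''$, so with your choice $m''=\lceil(1+\delta/10)m\rceil$ (which exactly matches your whp upper bound on $M$, with no slack) the event that $\cH_{m''}$ has fewer than $M$ distinct edges has non-negligible probability, and the coupling breaks.

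The fix — and this is exactly where $\xi=\xi(\delta)$ earns its keep — is to inflate $m''$ by a further factor compensating for repetitions. The paper sets $m_0:=\floor{(1+\xi)m}$ (the whp cap on $|E(\cH_q)|$) and then $m_1:=\floor{m_0/(1-4\xi)^2}$, so that $m_1$ strictly exceeds $m_0$ by a constant factor. Since each new i.i.d.\ sample is distinct from its predecessors with probability at least $1-m_1/e(\cH)>1-4\xi$ (this is where $m\le\xi e(\cH)$ enters), a Chernoff bound shows that among $m_1$ i.i.d.\ samples there are whp at least $m_0$ distinct edges, so the uniform $m_0$-subset $\cH^*_{m_0}$ can be coupled inside $\cH_{m_1}$ with failure probability $n^{-\omega(r)}$. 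One then applies \refT{thm:ps:random} at $m_1$, and the chain of constants $(1+\delta)(1+\xi)/(1-4\xi)^2\le 1+2\delta$ (which pins down $\xi(\delta)$) delivers the $(1+2\delta)rm/n$ bound. Your arithmetic in Step~2 also only works for $\delta\le 9$; choosing $\xi$ as a function of $\delta$, rather than the hard-coded $\delta/10$, avoids this side constraint as well.
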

\noindent
Conditional on~$\cX_i$, we will apply this corollary to~$\cH=([n], \cC_i)$ 
with $r:=k_i$, $m:=|E(\cH)|q_i$, $D:=\mu_{1,k_i,i}$, $q:=q_i$, as well as 
\begin{equation}\label{def:bdelta}
b:=2\ca/\log(1/(1-\gamma)) \quad \text{ and } \quad \delta:=30b/\ca. 
\end{equation}
We now verify the technical assumptions of \refC{c:ps:random} (and thus~\refT{thm:ps:random}). 
Using the definition~\eqref{eq:defs} of~$k_i$ and inequality~\eqref{eq:pi:ki} together with $p_i \le p \le 1-\gamma$, 
we obtain $2 < k_i \le 2\ca (\log n)/\log(1/p_i) \le b \log n$. 
Using the estimate~\eqref{eq:XSji} of~$\cX_i$ together with the definition~\eqref{def:qi:zeta} of $q_i$, it follows that
\begin{equation}\label{eq:m}
m = |\cC_{\emptyset, k_i, i}| \cdot q_i 
= \frac{(1\pm \eps) \mu_{0,k_i,i}}{(1+\eps)k^\cc \mu_{2,k_i,i}} 
=  \frac{1\pm \eps}{1+\eps} \cdot \frac{n(n-1)p_i}{k_i(k_i-1)k^\cc} ,
%\le \frac{2n^2p_i}{k_i^2 k^{\cc}},
\end{equation}
so that~$m \ge n^{2-\ca-o(1)} \gg n^{1+\ca}$ by~\eqref{eq:pi:ki} and choice of $\ca$. 
Recalling that $\eps=n^{-\ca}$, estimate~\eqref{eq:XSji} implies that~$\cH=([n], \cC_i)$ satisfies the degree condition in~\eqref{as:degcodeg}. 
We also have $\mu_{2,k_i,i}/\mu_{1,k_i,i} \le \bigpar{\Omega(n/k_i) \cdot p_i}^{-1} \le n^{-1+\ca+o(1)} \ll n^{\ca}$, 
which in view of~\eqref{eq:XSji} and~$D=\mu_{1,k_i,i}$ implies that~$\cH$ also satisfies the codegree condition in~\eqref{as:degcodeg}. 
We similarly infer~$D = \bigpar{\Omega(n/k_i) \cdot p_i^{k_i/2}}^{k_i-1} \ge (n^{1-\ca-o(1)})^{4} \gg n^3$, 
so that $m = O(n^2/k_i) \ll D/r \ll e(\cH)$. 
We thus may apply \refC{c:ps:random} to~$\cH$, 
which together with our above discussion gives
\begin{equation}
\label{eq:P:cbound:Gammai}
\Pr\bigpar{{\ci(\Gamma_i) \ge {(1+2\delta)k_i m/n}} \mid \cX_i} = \Pr\bigpar{{\ci(\cH_q) \ge {(1+2\delta)k_i m/n}} \mid \cX_i} \le n^{-\omega(1)} .
\end{equation}

In the following we fix a vertex~$v \in [n]$, and bound~$|S^{(v)}_i|$ and $|D^{(v)}_i|$ separately. 
Using~\eqref{eq:XSji} together with ${1-(1-q_i)^{2\eps\mu_{2,k_i,i}}} \le {2\eps\mu_{2,k_i,i}q_i}  \le {2 \eps k^{-\cc}}$ and $\eps = n^{-\ca} \ll k^{-2} \le k_i^{-2}$, 
it follows that 
\begin{equation}\label{eq:svi}
\E\bigpar{|S_i^{(v)}| \: \mid \cX_i} 
%= \sum_{e \in \cC_{\{v\}, 2, i}} \zeta_{e,i} 
\le |\cC_{\{v\},2,i}| \cdot \bigpar{1-(1 - q_i)^{2\eps\mu_{2, k_i, i}}} 
\le  \frac{2\eps n p_i}{k^\cc} \ll \frac{np_i}{k_i^2 k^{\cc}} =: \lambda.
\end{equation}
Note that $\lambda \geq n^{1 - \ca - o(1)} \gg \log n$ by inequality~\eqref{eq:pi:ki} and choice of~$\ca$. 
Furthermore, since~$|S_i^{(v)}|$ is a sum of independent indicator random variables, 
standard Chernoff bounds (such as~\cite[Theorem~2.1]{JLR}) imply 
\begin{equation}\label{eq:degSi:P}
\Pr\bigpar{|S_i^{(v)}| \ge 2\lambda \mid \cX_i} \le \exp\bigpar{-\Theta(\lambda)} \le n^{-\omega(1)} .
\end{equation}
Turning to~$|D^{(v)}_i|$, let $X$ denote the number of unordered pairs ${\{K', K''\} \in \binom{\Gamma_i}{2}}$ with ${|\{K', K''\} \cap \Gamma_i^{(v)}| \ge 1}$ and~${|E(K') \cap E(K'')| \geq 1}$. 
Since each of these edge-overlapping clique pairs contributes at most~$k_i \le k$ edges to~$|D_i^{(v)}|$, we infer $|D^{(v)}_i| \leq k X$. 
Furthermore, using~\eqref{eq:XSji} and~\eqref{def:qi:zeta}, it follows similarly to~\eqref{eq:m} that
\begin{equation}\label{eq:degDi:E}
\E(X \mid \cX_i) 
\le \sum_{K' \in \cC_{\{v\}, k_i, i}} \sum_{e \in \binom{K'}{2}} \sum_{K'' \in \cC_{e, k_i, i}} q_i^2
\le |\cC_{\{v\}, k_i, i}| \cdot \binom{k_i}{2} \cdot (1 + \eps)\mu_{2, k_i, i} \cdot q_i^2 \le \frac{k_i n p_i}{k^{2\cc}} =: \mu .
\end{equation}
Conditioning on the event~$\cX_i$, 
we shall bound~$X$ using the following upper tail inequality for combinatorial random variables, 
which is a convenient corollary of~\cite[Theorem~9]{Warnke2017}.  
\begin{lemma}\label{lem:UT}
Let $(\xi_j)_{j\in\Lambda}$ be a finite family of independent random variables with $\xi_j\in \{0,1 \}$. 
Let $(Y_\alpha)_{\alpha\in\mathcal{I}}$ be a finite family of random variables with $Y_\alpha := \indic{\xi_j=1\text{ for all } j\in \alpha}$. 
Defining $\mathcal{I}^+:= \{\alpha \in \mathcal{I}: Y_{\alpha}=1\}$, let~$\mathcal{G}$ be an event that implies $\max_{\alpha \in \mathcal{I}^+}|\{ \beta\in\mathcal{I}^+: \beta\cap\alpha\neq\emptyset\} | \leq C$. 
Set~$X:=\sum_{\alpha\in \mathcal{I}}Y_\alpha$, and assume that~$\E X \le \mu$. 
Then, for all~$x>\mu$,
\begin{equation}\label{eq:C}
\P(X\geq x \text{ and } \mathcal{G} ) 
\: \le \: 
\bigpar{e\mu/x}^{x/C} .
\end{equation}
\end{lemma}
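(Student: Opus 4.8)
The plan is to deduce Lemma~\ref{lem:UT} from the more general upper-tail inequality \cite[Theorem~9]{Warnke2017} by specializing its parameters to the present combinatorial setting. Recall that the cited theorem concerns a sum $X=\sum_{\alpha\in\cI}Y_\alpha$ of products of independent Boolean variables $\xi_j$, and bounds $\P(X\ge x\text{ and }\cG)$ in terms of $\E X$ and a quantity measuring the maximal ``dependency'' among the active index sets $\alpha\in\cI^+$; the event $\cG$ is used precisely to control this dependency uniformly.

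First I would fix notation to match \cite{Warnke2017}: the variables $(\xi_j)_{j\in\Lambda}$, the index family $(Y_\alpha)_{\alpha\in\cI}$ with $Y_\alpha=\prod_{j\in\alpha}\xi_j=\indic{\xi_j=1\ \forall j\in\alpha}$, and the random subfamily $\cI^+=\{\alpha:Y_\alpha=1\}$. The hypothesis that $\cG$ implies $\max_{\alpha\in\cI^+}|\{\beta\in\cI^+:\beta\cap\alpha\ne\emptyset\}|\le C$ is exactly the bounded-codegree / bounded-``certificate-overlap'' condition required by the cited theorem (there it typically appears as a bound on $\max_\alpha\sum_{\beta:\beta\cap\alpha\ne\emptyset}Y_\beta$ on the event $\cG$). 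So the main step is simply to check that with this choice of $C$, the bound furnished by \cite[Theorem~9]{Warnke2017} reads $\P(X\ge x\text{ and }\cG)\le(e\mu/x)^{x/C}$ for all $x>\mu\ge\E X$, which is the standard Janson/Poisson-type form the general theorem collapses to when the overlap parameter is a uniform constant $C$. Monotonicity in $\E X$ (replacing $\E X$ by the upper bound $\mu$) then yields the stated form, and the condition $x>\mu$ is what makes the exponent $x/C$ give a genuine tail bound.

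The only real subtlety — and the part I expect to take the most care — is matching the precise quantitative form of the overlap/dependency parameter in \cite[Theorem~9]{Warnke2017} to the clean constant $C$ used here: the cited theorem may phrase the relevant quantity as a maximum over $\alpha\in\cI$ (not $\cI^+$) or in terms of $\E$-weighted overlaps, so I would note that on the event $\cG$ one has the claimed deterministic bound on $\cI^+$, and that restricting to $\cG$ is exactly what licenses passing from the general statement to the displayed inequality~\eqref{eq:C}. Everything else is routine substitution, so the proof is short: state the correspondence of parameters, invoke \cite[Theorem~9]{Warnke2017} on the event $\cG$, and simplify.

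\begin{proof}[Proof of \refL{lem:UT}]
This is a special case of the upper-tail inequality \cite[Theorem~9]{Warnke2017}.
Indeed, in the notation of that theorem, $(\xi_j)_{j\in\Lambda}$ is our family of independent Boolean variables, $(Y_\alpha)_{\alpha\in\cI}$ with $Y_\alpha=\prod_{j\in\alpha}\xi_j$ is the associated family of monomials, and $X=\sum_{\alpha\in\cI}Y_\alpha$.
On the event~$\cG$ we have, for every $\alpha\in\cI^+$, the deterministic bound $\sum_{\beta\in\cI^+:\,\beta\cap\alpha\neq\emptyset}Y_\beta=|\{\beta\in\cI^+:\beta\cap\alpha\neq\emptyset\}|\le C$,
so the relevant overlap parameter in \cite[Theorem~9]{Warnke2017} is at most the constant~$C$.
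For this choice, that theorem yields $\P(X\ge x\text{ and }\cG)\le(e\,\E X/x)^{x/C}$ for all $x>\E X$.
Since $\E X\le\mu$ and $t\mapsto(et/x)^{x/C}$ is increasing in $t\in(0,x)$, for all $x>\mu$ we obtain
\begin{equation*}
\P(X\ge x\text{ and }\cG)\le(e\mu/x)^{x/C},
\end{equation*}
as claimed.
\end{proof}
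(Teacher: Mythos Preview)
Your proposal is correct and matches the paper's approach exactly: the paper does not give a separate proof of this lemma but simply states it as ``a convenient corollary of~\cite[Theorem~9]{Warnke2017}'', and your argument spells out precisely that reduction. Nothing further is needed.
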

\noindent
We will apply \refL{lem:UT} to~$X$ with~$\Lambda =\cC_i$, the independent random variables~$\xi_K := \indic{K \in \Gamma_i}$, and $\cI$ equal to the set of unordered pairs~${\{K', K''\} \in \binom{\cC_i}{2}}$ with ${|\{K', K''\} \cap \cC_{\{v\}, k_i, i}| \ge 1}$ and~${|E(K') \cap E(K'')| \geq 1}$. 
Let~$\cG$ denote that the event that each edge~$e \in E_i$ is contained in at most~$z := \ceil{\log n}$ cliques in~$\Gamma_i$. 
Clearly, $\cG$ implies that each clique~$K' \in \Gamma_i$ has edge-overlaps with a total of at most $\binom{k_i}{2} \cdot z $  cliques $K'' \in \Gamma_i$, 
so that the parameter~$C:= 2 \cdot \binom{k_i}{2}z \le k^2z$ works in \refL{lem:UT}. 
Recalling~${|D_i^{(v)}| \leq k X}$, by invoking inequality~\eqref{eq:C} with ${x := \lambda/k} \ge {k^{\tau-4}\mu} > {e^2 \mu}$ it follows~that 
\begin{equation}\label{eq:degDi:P:1}
\P\bigpar{|D_i^{(v)}| \geq \lambda \text{ and } \mathcal{G} \mid \cX_i}  
\leq \P\bigpar{X \geq \lambda/k \text{ and } \mathcal{G} \mid \cX_i}  
\leq \exp\bigpar{-\Theta(\lambda/(k^3z))} \le n^{-\omega(1)},
\end{equation}
where the last inequality uses~$\lambda/(k^3z) \ge \lambda n^{-o(1)} \gg \log n$ analogous to~\eqref{eq:degSi:P}.
With an eye on the event~$\cG$, note that conditional on $\cX_i$ we have~$|\cC_{e, k_i, i}| q_i \le  k^{-\cc} \le 1$ for each edge~$e \in E_i$. 
Recalling~$z =\ceil{\log n}$, 
by taking a union bound over all edges~$e \in E_i$ it now is routine to see that 
\begin{equation}\label{eq:degDi:P:2}
\P(\neg\mathcal{G} \mid \cX_i) \le \sum_{e \in E_i} \binom{|\cC_{e, k_i, i}|}{z} q_i^z \le |E_i| \cdot \Bigpar{|\cC_{e, k_i, i}|q_ie/z}^z \le n^2 \cdot (e/z)^z \le n^{-\omega(1)} .
\end{equation}

To sum up, by combining the above inequalities~\eqref{eq:P:cbound:Gammai}, \eqref{eq:degSi:P}, and \eqref{eq:degDi:P:1}--\eqref{eq:degDi:P:2} for~$0 \le i \le I-1$ with the degree estimate~$|E^{(v)}_{I}|=|\cC_{\{v\},k_I,I}| = (1\pm\eps)(n-1)p_I$ from~\eqref{eq:XSji}, 
using $I=n^{o(1)}$ and \refT{thm:pseudorandom} it follows (by a standard union bound argument) that the chromatic index~\eqref{eq:P:cbound:2} of~$\cP$ is whp at~most 
\begin{equation}
\label{eq:cindex:bound:1}
\ci(\cP) \leq \sum_{0 \leq i \leq I-1} \biggpar{\frac{(1+2\delta)2np_i}{k_ik^\cc} + \frac{3np_i}{k_i^2k^\cc}} \: + \: (1+\eps)n p_I + n^{o(1)},
\end{equation}
where the~$k_i^2 > k_i$ term will be useful in \refS{sec:Psize}. 
Let ${\pi := \log(1/p)}$ and ${f(x) := e^{-x}(1+x/\pi)}$. 
%Recall that~$k_i^2 > k_i$ by~\eqref{eq:pi:ki}. 
Using ${p_i=p \cdot e^{-i/k^\cc}}$ and $k_i \ge {\ca \log_{1/p}(n)/(1+i/(k^\cc \pi))}$ 
as well as $n^{\ca} \ll n^{1-\ca} \le n p_I \le n p/k^{\cc}$, % and ${k=k_0}$, 
it follows that 
\begin{equation}
\label{eq:cindex:bound:2}
\ci(\cP) \leq \frac{(5+4\delta)np}{\ca \log_{1/p}n} \sum_{0 \leq i \leq I-1} \frac{f(i/k^\cc)}{k^\cc} \: + \: \frac{3np}{\bigpar{\ca \log_{1/p}n} k^{\cc-1}}.
\end{equation}
On~$[0,\infty)$ the function $f(x)$ first increases and then decreases, with a maximum at $x^* := {\max\{0, 1-\pi\}}$. 
By comparing the sum with an integral, it then is standard to see that 
\begin{equation}
\label{eq:sum-cindex:bound}
\sum_{0 \leq i \leq I-1} \frac{f(i/k^\cc)}{k^\cc} 
\le \int_{0}^{\infty}f(x)dx + 2 f(x^*)/k^\cc 
\: \le \: 1 + O\bigpar{\pi^{-1} + k^{-\cc}}.
%\bigpar{1+2/\pi + 2/\pi^2} + \max\{2,8/\pi^2\}/k^\cc .
\end{equation}
Combining inequalities~\eqref{eq:cindex:bound:2}--\eqref{eq:sum-cindex:bound} with the definition~\eqref{def:bdelta} of~$\delta$, 
after noting~$\pi \ge \log(1/(1-\gamma))> 0$ and $\min\{k^{\cc},k^{\cc-1}\} > 1$ it follows that 
there is a constant~$C=C(\ca,\gamma) > 0$ such that whp~$\ci(\cP)  \le {C np/\log_{1/p}n}$. %, as desired. 

\subsubsection{Size of~$\cP$}\label{sec:Psize} 
It remains to bound the size of the clique partition~$\cP$, which by construction is at most
\begin{equation}
\label{eq:P:bound:1}
|\cP| \leq \sum_{0 \leq i \leq I-1} \Bigpar{|\Gamma_i| + |D_i| + |S_i|} + |E_I|. 
\end{equation}
Rather than estimating each of these terms (which is conceptually straightforward), 
we shall instead reuse known estimates from \refS{sec:packing:CI}. 
A routine double-counting argument gives ${|\Gamma_i| \cdot k_i} \leq {\sum_{K \in \Gamma_i}|K|} = {\sum_{v \in [n]}|\Gamma_i^{(v)}|} \leq {n \cdot \ci(\Gamma_i)}$.
Recalling that $D_i, S_i,E_I$ are simply sets of edges, it follows~that 
\begin{equation}
\label{eq:P:bound:2}
|\cP| \leq \sum_{0 \leq i \leq I-1} \Bigpar{n/k_i \cdot \ci(\Gamma_i) + n \cdot \max_{v \in [n]}\bigabs{D^{(v)}_i} + n \cdot \max_{v \in [n]}\bigabs{S^{(v)}_i}} + n \cdot \max_{v \in [n]}\bigabs{E^{(v)}_I}.
\end{equation}
After comparing the above upper bound for~$|\cP|$ with~\eqref{eq:P:cbound:2}, 
we see that the proof of~\eqref{eq:cindex:bound:1} implies the following estimate:
the size~\eqref{eq:P:bound:1} of~$\cP$ is whp at~most 
\begin{equation}
\label{eq:P:bound:2}
|\cP| \leq \sum_{0 \leq i \leq I-1} \biggpar{\frac{(1+2\delta)2n^2p_i}{k_i^2k^\cc} + \frac{3n^2p_i}{k_i^2k^\cc}} \: + \: (1+\eps)n^2 p_I .
\end{equation}
Recalling~$\pi = \log(1/p)$, set~$g(x) := e^{-x}(1 + x/\pi)^2$. 
Proceeding similarly to~\eqref{eq:cindex:bound:1}--\eqref{eq:sum-cindex:bound}, 
using~${\int_{0}^{\infty}g(x)dx} = {1 + O(\pi^{-1} + \pi^{-2})}$ % 1+2/\pi + 2/\pi^2 
it follows that there is a constant~$B=B(\ca,\gamma) > 0$ such that whp
\begin{equation}
\label{eq:P:bound:3}
|\cP| \leq \frac{(5 + 4\delta)n^2p}{\bigpar{\ca \log_{1/p}n}^2} \sum_{0 \leq i \leq I-1} \frac{g(i/k^\cc)}{k^\cc} \: + \: \frac{2n^2p}{\bigpar{\ca \log_{1/p}n}^2 k^{\cc-2}} \; \le \; B \frac{n^2p}{(\log_{1/p}n)^2},
%\sum_{0 \leq i \leq I-1} \frac{g(i/k^\cc)}{k^\cc} \le \int_{0}^{\infty}g(x)dx + 2 g(x^*)/k^\cc 
%\: \le \:  1 + O\bigpar{\pi^{-1} + \pi^{-2} + k^{-\cc}}. 
\end{equation}
which completes the proof \refT{thm:mainpacking} (modulo the deferred proof of \refT{thm:pseudorandom}). 
\noproof

\subsection{Pseudo-randomness of the graphs~$G_i$: proof of \refT{thm:pseudorandom}}\label{sec:mainnibble}
In this section we give the deferred proof of \refT{thm:pseudorandom}.
For any vertex-subset $S \subseteq [n]$ we define
\begin{equation}\label{def:NSi}
N_{S,i} := \bigabs{\bigset{w \in V \setminus S \: \colon \: S \times \{w\} \subseteq E_i}} . % |\cC_{S, |S| + 1, i}| = 
\end{equation}
In words, $N_{S,i}$ denotes the number of common neighbors of~$S$ in $G_i=([n],E_i)$. 
Recalling that~$G_i$ heuristically resembles the random graph~$\Gnpp{p_i}$, we expect that~$N_{S,i} \approx (n-|S|)p_i^{|S|}$; 
so to avoid clutter we set 
\begin{equation}\label{def:lambdasi}
\lambda_{s,i} :=  (n - s) p_i^{s}. % \mu_{s,s+1,i} =
\end{equation}
The following pseudo-random result establishes~\refT{thm:pseudorandom} by confirming this heuristic prediction. 
\begin{theorem}[Strengthening of \refT{thm:pseudorandom}]\label{thm:mainnibble}
Let~$p=p(n)$ satisfy~$n^{-\ca/\cc} \le p \le 1-\gamma$, where~$\gamma \in (0,1)$ is a constant. 
Then, with probability at least $1 - n^{-\omega(1)}$, for all $0 \leq i \leq I$ the following event~$\cN_i$ holds: 
for all~$S \subseteq [n]$ with~$0 \leq |S| \le k_i-1$, 
\begin{equation}\label{eq:mainnibble}
    N_{S,i} \; = \; \bigpar{1 \pm (i+1) \deps } \cdot \lambda_{|S|,i}.
\end{equation}
Furthermore, 
$\cN_i$ implies the event~$\cX_i$ from \refT{thm:pseudorandom}
for~$0 \le i \le I$ and~$n \ge n_0(\ca,\cc)$. 
\end{theorem}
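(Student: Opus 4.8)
\medskip

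The plan is to prove the common-neighbor concentration statement~\eqref{eq:mainnibble} by induction on~$i$, tracking the events~$\cN_0, \cN_1, \ldots, \cN_I$ and showing that $\Pr(\neg\cN_{i+1} \mid \cN_0 \cap \cdots \cap \cN_i)$ is superpolynomially small, so that a union bound over the $I = n^{o(1)}$ steps gives the claim. The base case $i=0$ is just the statement that $\Gnp$ has all common-neighborhood counts $N_{S,0}$ concentrated around $(n-|S|)p^{|S|}$ for $|S| \le k-1 = n^{o(1)}$, which follows from a Chernoff bound for the binomial $\Bin(n-|S|, p^{|S|})$ (whose mean is $\lambda_{|S|,0} \ge n^{1-\ca-o(1)} \gg \log n$ by~\eqref{eq:pi:ki}, so the relative deviation $\deps = n^{-2\beps}$ is achieved with probability $1 - e^{-n^{\Omega(1)}}$), together with a union bound over the $\sum_{s \le k-1}\binom{n}{s} \le n^{k} = n^{n^{o(1)}}$ choices of $S$.

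\medskip

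For the inductive step, I would fix $S$ with $|S| \le k_{i+1}-1 \le k_i - 1$ and analyze how $N_{S,i+1}$ arises from $N_{S,i}$. By the algorithm, a common neighbor $w$ of $S$ in $G_i$ survives into $G_{i+1}$ iff none of the edges in $S \times \{w\}$ lie in $E(\Gamma_i) \cup S_i$; equivalently, $N_{S,i+1}$ counts those $w$ for which every edge $vw$ ($v \in S$) avoids both the sampled clique collection and the sampled edge set. The one-step survival heuristic from Section~\ref{sec:alg:details} suggests each such edge survives with probability $\approx e^{-1/k^\cc}$ roughly independently, giving conditional expectation $\approx N_{S,i} \cdot (e^{-1/k^\cc})^{|S|} \approx \lambda_{|S|,i} \cdot (p_{i+1}/p_i)^{|S|} = \lambda_{|S|,i+1}$, which is exactly the target. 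To make this rigorous I would: (a) condition on $G_i$ (hence on $\cN_i$, which by its conclusion controls all the clique counts $|\cC_{S,j,i}|$ via~\eqref{eq:XSji} — this is the "furthermore" part, proved by noting $|\cC_{S,j,i}|$ is a deterministic function of the $N_{T,i}$'s through an inclusion-type identity, or directly by the same binomial-with-$N_{S,i}$-trials argument); (b) compute $\E(N_{S,i+1}\mid \cN_i)$ using the definitions~\eqref{def:qi:zeta} of $q_i$ and $\zeta_{e,i}$, checking that the per-edge survival probability $(1-q_i)^{|\cC_{e,k_i,i}|}(1-\zeta_{e,i})$ equals $e^{-1/k^\cc}$ up to a multiplicative $1 \pm O(\eps)$ error uniformly over edges $e$ incident to $S$, using~\eqref{eq:XSji} to pin down $|\cC_{e,k_i,i}| = (1\pm\eps)\mu_{2,k_i,i}$; and (c) multiply the $|S| \le k_i$ per-edge survival events, whose joint probability again factors up to $1\pm O(k_i\eps)$ error because the small dependencies (two edges at $w$ sharing a clique of $\Gamma_i$) contribute only lower-order terms. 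This produces $\E(N_{S,i+1}\mid\cN_i) = (1 \pm O(k\eps))\lambda_{|S|,i+1}$, and since $k\eps = n^{-\Omega(1)} \ll \deps$, the drift in the error parameter from $(i+1)\deps$ to $(i+2)\deps$ absorbs this.

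\medskip

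The concentration around this conditional mean is where the main work lies: $N_{S,i+1}$ is a function of the independent choices $(\indic{K \in \Gamma_i})_{K \in \cC_i}$ and $(\indic{e \in S_i})_{e \in E_i}$, but it is \emph{not} a simple sum of independent indicators, since a single clique $K \in \Gamma_i$ can destroy many potential common neighbors $w$ simultaneously (all $w$ such that some $vw$, $v\in S$, lies in $K$). I would handle this with a bounded-differences / McDiarmid-type inequality with a Lipschitz-type correction, or more cleanly with the Warnke-type combinatorial concentration inequality (the upper-tail tool \refL{lem:UT} handles one direction; a two-sided analogue, or a direct martingale argument in the spirit of the differential equation method, handles the other): the key quantitative input is that, conditional on $\cN_i$, no edge lies in too many cliques of $\cC_i$ (indeed $|\cC_{e,k_i,i}|q_i \le k^{-\cc}$, so whp no edge is in more than $O(\log n)$ cliques of $\Gamma_i$, exactly as in~\eqref{eq:degDi:P:2}), which bounds the effect of toggling any single clique-indicator by $O(k_i \log n) = n^{o(1)}$, negligible against $\lambda_{|S|,i+1} \ge n^{1-\ca-o(1)}$. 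This gives $\Pr(|N_{S,i+1} - \E(N_{S,i+1}\mid\cN_i)| > \deps \lambda_{|S|,i+1}/2 \mid \cN_i) \le e^{-n^{\Omega(1)}}$, and a union bound over the at most $n^{k} = n^{n^{o(1)}}$ sets $S$ and the $I$ steps completes the induction. The main obstacle, as flagged, is setting up this concentration argument cleanly in the presence of the large ($O(\log n)$-size) cliques, so that the "bad" configurations contributing to the failure probability are controlled uniformly over all $S$ — this is precisely the technical difficulty the semi-random approach is designed to tame, by keeping all the relevant clique statistics pinned down by the previously-established event $\cN_i$.
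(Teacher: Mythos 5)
Your overall architecture matches the paper's: induction on~$i$, a Chernoff-plus-union bound for~$\cN_0$ (the paper's \refL{lem:N0}), a conditional-expectation computation for~$\E(N_{S,i+1}\mid\cN_i)$ that uses the definitions of~$q_i$ and~$\zeta_{e,i}$ to show the drift is~$(1+o(\deps))\lambda_{|S|,i+1}$, a concentration estimate around this mean, and the ``furthermore'' implication~$\cN_i\Rightarrow\cX_i$ via a double-counting/sequential-neighborhood argument (all of this is exactly the paper's \refL{lem:Ni} and the first half of the proof of \refT{thm:mainnibble}). However, there is a real gap in your concentration step. You reason that since whp each edge lies in~$O(\log n)$ cliques of~$\Gamma_i$ (the event~$\cG$), the effect of toggling one clique-indicator is~$n^{o(1)}$, and conclude McDiarmid gives concentration because this is negligible against~$\lambda_{|S|,i+1}$. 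This does not work: a bounded-differences inequality needs a bound on the \emph{sum of squared Lipschitz constants over all independent coordinates} (not just a bound on the max), and the number of cliques~$K\in\cC_i$ with~$c_K\neq 0$ is~$\approx N_{S,i}|S|\mu_{2,k_i,i}$, where~$\mu_{2,k_i,i}$ is polynomially (even super-polynomially) large. So plain McDiarmid gives a variance term of order~$N_{S,i}|S|\mu_{2,k_i,i}\cdot k^4\gg\lambda_{|S|,i}^2/\eps^4$, which is hopeless. Conditioning on~$\cG$ does not help either: $\cG$ constrains how many cliques of the \emph{sampled}~$\Gamma_i$ pass through an edge, but the sum of squared Lipschitz constants ranges over all potential toggles, i.e.\ all of~$\cC_i$.

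The ingredient you are missing is the specific form of the paper's \refL{thm:BDI} (the Bernoulli bounded-differences inequality from~\cite{warnke2016method}): the variance proxy there is~$V=\sum_\alpha c_\alpha^2\,\Pr(\xi_\alpha=1)$, \emph{weighted by the success probabilities}. This is what rescues the argument: $\sum_K c_K^2\,q_i\le N_{S,i}|S|\cdot(1+\eps)\mu_{2,k_i,i}\cdot k^4\cdot q_i=N_{S,i}|S|k^4/k^\cc\le 2N_{S,i}$, using precisely the identity~$(1+\eps)\mu_{2,k_i,i}q_i=k^{-\cc}$ you correctly identified as ``the key quantitative input'' --- but that identity has to enter through the probability weights in~$V$, not through a Lipschitz-constant reduction via~$\cG$. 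Once you use the probability-weighted~$V$ (together with~$C=k^2$ and the fact that~$k^2\deps\ll 1$), the tail bound becomes~$\exp(-\Theta(\eps^4\lambda_{|S|,i+1}))=n^{-\omega(k)}$, and the rest of your argument (union bound over~$n^{O(k)}$ sets~$S$ and~$I=n^{o(1)}$ steps) goes through. No separate~$\cG$-conditioning is needed for the concentration step.
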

\begin{proof}
Noting~$k I \deps \le n^{o(1)-\beps}\eps \ll \eps$ it is routine to see that $\cN_i$ implies $\cX_i$, but we include the proof for completeness.
Fixing~$|S| \le j \le k_i$ and $0 \le i \le I$, 
we shall double-count the number of vertex-sequences $x_{|S|+1}, \ldots, x_{j} \in [n] \setminus S$ with the property that $S \cup \{x_{|S|+1}, \ldots, x_{j}\} \in \cC_{S, j , i}$. 
Using~\eqref{eq:mainnibble} to sequentially estimate the number of common neighbors of $S \cup \{x_{|S|+1}, \ldots, x_{\ell}\}$, 
noting~$j \cdot I \deps \le k I \deps \ll \eps$ it follows~that 
\[
(j - |S|)! \cdot |\cC_{S, j, i}| = \hspace{-0.25em}\prod_{|S| \le \ell \le j-1} \hspace{-0.5em}\Bigpar{\bigpar{1 +O(I\deps)} \cdot (n-\ell)p_i^{\ell}} = (1+o(\eps)) \cdot \mu_{|S|,j,i} \cdot (j - |S|)! ,
\]
which readily gives~\eqref{eq:XSji} for~$n \ge n_0(\ca,\cc)$, establishing the claim that~$\cN_i$ implies~$\cX_i$.  
With this implication and~$I = n^{o(1)}$ in mind, the below auxiliary Lemmas~\ref{lem:N0}--\ref{lem:Ni} then complete the proof of \refT{thm:mainnibble}. 
\end{proof}
\begin{lemma}\label{lem:N0}
We have $\P(\neg \cN_0) \leq n^{-\omega(1)}$.
\end{lemma}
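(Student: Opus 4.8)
The plan is to establish the base case $\cN_0$ of the inductive pseudo-randomness statement, which is simply a statement about the original binomial random graph $\Gnp = G_0$ (recall $E_0 = E(\Gnp)$ and $p_0 = p$). Since $i=0$, the claim~\eqref{eq:mainnibble} asserts that for every $S \subseteq [n]$ with $0 \le |S| \le k_0 - 1$, the number of common neighbors satisfies $N_{S,0} = (1 \pm \eps)\lambda_{|S|,0}$ with $\lambda_{s,0} = (n-s)p^s$; we must show this fails with probability at most $n^{-\omega(1)}$.

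First I would fix a set $S$ with $|S| = s \le k_0 - 1$. For each vertex $w \in [n] \setminus S$, the indicator that $S \times \{w\} \subseteq E_0$ holds is a product of $s$ independent $\Be(p)$ variables, hence a $\Be(p^s)$ variable, and these indicators are mutually independent across the $n - s$ choices of $w$ (they involve disjoint edge sets). Thus $N_{S,0} \sim \Bin(n-s, p^s)$ with mean $\lambda_{s,0} = (n-s)p^s$. The key quantitative point is the lower bound on this mean: by~\eqref{eq:pi:ki} (applied with $i=0$) we have $p^{k_0-1} \ge n^{-\ca}$, so $\lambda_{s,0} \ge (n-s)p^{k_0-1} \ge n^{1-\ca-o(1)}$, which is polynomially large in $n$. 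Applying a standard Chernoff bound (e.g.~\cite[Theorem~2.1]{JLR}) with relative deviation $\eps = n^{-\beps} = n^{-\ca}$ gives
\[
\P\bigpar{\bigabs{N_{S,0} - \lambda_{s,0}} > \eps\lambda_{s,0}} \le 2\exp\bigpar{-\Theta(\eps^2 \lambda_{s,0})} = 2\exp\bigpar{-n^{\Omega(1)}},
\]
since $\eps^2\lambda_{s,0} \ge n^{-2\ca} \cdot n^{1-\ca-o(1)} = n^{1-3\ca-o(1)} = n^{\Omega(1)}$ by the choice $\ca = 1/9$ (so $1 - 3\ca = 2/3 > 0$).

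Then I would finish by a union bound over all relevant $S$. The number of sets $S \subseteq [n]$ with $|S| \le k_0 - 1 \le k = n^{o(1)}$ is at most $k \cdot n^{k} = n^{1+o(1)} \cdot n^{n^{o(1)}}$, which is a quantity of the form $\exp(n^{o(1)})$. Since each bad event has probability $\exp(-n^{\Omega(1)})$, the union bound yields $\P(\neg\cN_0) \le \exp(n^{o(1)}) \cdot \exp(-n^{\Omega(1)}) = \exp(-n^{\Omega(1)}) \le n^{-\omega(1)}$, as desired. The only mild subtlety — and the one place to be slightly careful — is bookkeeping the exponents to confirm that the Chernoff exponent $\eps^2\lambda_{s,0}$ genuinely dominates $\log(\text{number of sets}) = n^{o(1)}$; this is where the concrete constant $\ca = 1/9$ (and more generally $3\ca < 1$) is used, but there is no real obstacle here since the mean is polynomially large while the number of sets is only quasi-polynomial. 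No appeal to $\cN_{i-1}$ or to the algorithm's later structure is needed for this base case.
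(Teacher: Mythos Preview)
Your approach is exactly the paper's: recognize that $N_{S,0}\sim\Bin(n-|S|,p^{|S|})$, apply Chernoff, and union bound over the $n^{O(k)}$ sets~$S$. However, you have misread the definition of~$\cN_0$. The event~$\cN_i$ in~\eqref{eq:mainnibble} requires $N_{S,i}=(1\pm(i+1)\deps)\lambda_{|S|,i}$, so for $i=0$ the target deviation is~$\deps$, not~$\eps$. As written, your argument only establishes $N_{S,0}=(1\pm\eps)\lambda_{|S|,0}$, which is strictly weaker than~$\cN_0$.

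The fix is immediate: run the same Chernoff bound with relative deviation~$\deps$ in place of~$\eps$, giving exponent $\Theta(\eps^4\lambda_{s,0})\ge n^{1-5\ca-o(1)}$. Since $\ca=1/9$ gives $1-5\ca=4/9>0$, this is still $n^{\Omega(1)}$ and the union bound goes through unchanged. This is precisely what the paper does (it even phrases the final estimate as $n^{-\omega(k)}$ to make the union bound over $n^{O(k)}$ sets transparent). So your bookkeeping remark about needing $3\ca<1$ should instead read $5\ca<1$.
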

\begin{lemma}\label{lem:Ni}
We have $\P(\neg \cN_{i+1} \mid \cN_i) \leq n^{-\omega(1)}$ for all $0 \le i < I$.
\end{lemma}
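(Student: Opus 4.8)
The plan is to fix $i<I$ and an outcome of the first $i$ iterations for which $G_i$ satisfies~$\cN_i$; since $\cN_i$ is determined by~$G_i$, it suffices to bound $\P(\neg\cN_{i+1}\mid G_i)\le\exp(-n^{\Omega(1)})$ uniformly over all such~$G_i$, the only remaining randomness being the iteration-$i$ choices of~$\Gamma_i$ and~$S_i$. Throughout I use that $\cN_i$ implies~$\cX_i$ (as shown in the proof of \refT{thm:mainnibble}), so that every clique count $|\cC_{T,k_i,i}|$ with $|T|\le k_i$ lies in $(1\pm\eps)\mu_{|T|,k_i,i}$, and that by~\eqref{eq:pi:ki} we have $2\le k_{i+1}\le k_i\le k\le n^{o(1)}$, $q_i\le n^{-2}$ and $\lambda_{|S|,i}\ge\tfrac12 n^{1-\ca}$ whenever $|S|\le k_i-1$.

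Fix $S\subseteq[n]$ with $|S|\le k_{i+1}-1\le k_i-1$, and write $R_e:=\indic{e\in E(\Gamma_i)\cup S_i}$ for the event that edge $e\in E_i$ is removed in iteration~$i$, so $N_{S,i+1}=\sum_{w\in N_{S,i}}\prod_{s\in S}(1-R_{\{s,w\}})$. The crucial computation is the conditional mean. For a fixed $w\in N_{S,i}$, the cliques of~$\cC_i$ covering some edge $\{s,w\}$ with $s\in S$ are precisely those in $\bigcup_{s\in S}\cC_{\{s,w\},k_i,i}$, and the $S_i$-removals are independent over edges, so $\P\bigpar{R_{\{s,w\}}=0\text{ for all }s\in S\mid G_i}=(1-q_i)^{|\bigcup_{s}\cC_{\{s,w\},k_i,i}|}\prod_{s\in S}(1-\zeta_{\{s,w\},i})$. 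By the definition of~$\zeta_{e,i}$, inclusion--exclusion, and~$\cX_i$, the $\sum_s|\cC_{\{s,w\},k_i,i}|$ contributions cancel (this cancellation is exactly the purpose of~$\zeta_{e,i}$), leaving $(1-q_i)^{|S|(1+\eps)\mu_{2,k_i,i}-O(\binom{|S|}{2}\mu_{3,k_i,i})}$; using $(1+\eps)\mu_{2,k_i,i}q_i=k^{-\cc}$, together with $\binom{|S|}{2}\mu_{3,k_i,i}q_i=o(\deps)$ and $|S|q_i=o(\deps)$, this equals $\bigpar{(1-q_i)^{(1+\eps)\mu_{2,k_i,i}}}^{|S|}(1+o(\deps))=(p_{i+1}/p_i)^{|S|}(1+o(\deps))$, uniformly in~$w$ and~$S$. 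Combined with $N_{S,i}=(1\pm(i+1)\deps)\lambda_{|S|,i}$ from~$\cN_i$ and $\lambda_{|S|,i}(p_{i+1}/p_i)^{|S|}=\lambda_{|S|,i+1}$, this yields $\E(N_{S,i+1}\mid G_i)=(1\pm(i+1)\deps\pm o(\deps))\lambda_{|S|,i+1}$.

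For the concentration I view $N_{S,i+1}$ as a function of the independent family $(\indic{K\in\Gamma_i})_{K\in\cC_i}$ and $(\indic{e\in S_i})_{e\in E_i}$ and run the associated Doob martingale, revealing these variables one at a time, to which a Bernstein-type martingale inequality (such as Freedman's) applies. Revealing $\indic{e\in S_i}$ for $e=\{s,w\}$ changes~$N_{S,i+1}$ by at most~$1$; revealing $\indic{K\in\Gamma_i}$ changes it by at most $|K\setminus S|<k_i$; and in both cases the increment vanishes unless the object is \emph{relevant}, i.e., $e$ is an edge incident to~$S$ with its other endpoint in~$N_{S,i}$ (at most $|S|N_{S,i}$ such), or $K\cap S\neq\emptyset$ (at most $\sum_{s\in S}|\cC_{\{s\},k_i,i}|\le|S|(1+\eps)\mu_{1,k_i,i}$ such, by~$\cX_i$). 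Hence the predictable quadratic variation is bounded, deterministically given that $G_i$ satisfies~$\cN_i$, by $V:=|S|N_{S,i}\max_e\zeta_{e,i}+|S|(1+\eps)\mu_{1,k_i,i}q_ik_i^2$, and using $\zeta_{e,i}\le 2\eps/k^\cc$, $\mu_{1,k_i,i}q_i\le np_i/k^\cc$, $N_{S,i}\le 2\lambda_{|S|,i}$, $\lambda_{|S|,i+1}=(1-o(1))\lambda_{|S|,i}$ and $\lambda_{|S|,i+1}\ge\tfrac13 n^{1-\ca}$, a direct computation (splitting on which term of~$V$ dominates) gives $(\tfrac14\deps\lambda_{|S|,i+1})^2/V\ge n^{\Omega(1)}$ and $\tfrac14\deps\lambda_{|S|,i+1}/k_i\ge n^{\Omega(1)}$; the martingale inequality then yields $\P\bigpar{|N_{S,i+1}-\E(N_{S,i+1}\mid G_i)|\ge\tfrac14\deps\lambda_{|S|,i+1}\mid G_i}\le\exp(-n^{\Omega(1)})$.

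Combining the two estimates, off a bad event of probability $\exp(-n^{\Omega(1)})$ we get $N_{S,i+1}=(1\pm(i+1)\deps\pm o(\deps)\pm\tfrac14\deps)\lambda_{|S|,i+1}\subseteq(1\pm(i+2)\deps)\lambda_{|S|,i+1}$ for~$n$ large; a union bound over the at most $n^{k_i+1}=\exp(n^{o(1)})$ admissible sets~$S$, and over the choice of~$G_i$, then gives $\P(\neg\cN_{i+1}\mid\cN_i)\le\exp(-n^{\Omega(1)})\le n^{-\omega(1)}$. The main obstacle is the concentration step: a single selected clique can disconnect up to $k_i=\Theta(\log n)$ common neighbours of~$S$ at once, so the crude bounded-differences estimate is hopeless, and one must exploit that only $O(|S|\mu_{1,k_i,i})$ cliques are relevant and each is chosen with the minuscule probability $q_i\le n^{-2}$; verifying that the resulting variance proxy~$V$ beats $(\deps\lambda_{|S|,i+1})^2$ by a polynomial factor is precisely where the choices of~$\ca,\cc$ and $\eps=n^{-\ca}$ enter.
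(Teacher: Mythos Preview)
Your proof is correct and follows essentially the same approach as the paper: compute $\E(N_{S,i+1}\mid G_i)$ via the cancellation built into~$\zeta_{e,i}$, then apply a Bernstein-type concentration inequality to the independent Bernoulli family $(\indic{K\in\Gamma_i})_{K\in\cC_i}$, $(\indic{e\in S_i})_{e\in E_i}$, and finish with a union bound. The only cosmetic differences are that the paper uses the bounded-differences inequality~\eqref{eq:BDI} rather than Freedman, and counts relevant cliques as those containing an edge $\{s,w\}$ with $s\in S$, $w\in N_{S,i}$ (at most $N_{S,i}|S|(1+\eps)\mu_{2,k_i,i}$ of them) with the cruder Lipschitz bound $c_K\le\binom{k_i}{2}$, whereas you take the tighter Lipschitz bound $c_K<k_i$ but the looser count $|S|(1+\eps)\mu_{1,k_i,i}$; both choices give a variance proxy that is polynomially small compared to $(\eps^2\lambda_{|S|,i+1})^2$.
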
 
\begin{proof}[Proof of \refL{lem:N0}]
Fix~${S \subseteq [n]}$ with~${|S| \le k-1}$, where~$k=k_0$. 
Note that $N_{S, 0}$ has a Binomial distribution with ${\E N_{S,0}} ={ (n-|S|)p^{|S|}} ={\lambda_{|S|,0}}$, where~$p=p_0$. 
Since $\eps^4 \lambda_{|S|,0} = \Theta\bigpar{n^{1-4\ca} p_0^{|S|}} \ge \Omega(n^{1-5\ca}) \gg k \log n$ by inequality~\eqref{eq:pi:ki} and choice of~$\ca$, 
standard Chernoff bounds (such as~\cite[Theorem~2.1]{JLR}) imply that 
\begin{equation}\label{eq:N0:Chern}
\Pr\bigpar{|N_{S,0}-\lambda_{|S|,0}| \ge \deps \lambda_{|S|,0}|} \le  2 \cdot \exp \bigpar{- \Theta \lrpar{\eps^4 \lambda_{|S|,0}}} \le n^{-\omega(k)} ,
\end{equation}
which completes the proof by taking a union bound over all $n^{O(k)}$ choices of the sets~$S$. 
\end{proof}
Conditioning on the event~$\cN_i$, in the proof of \refL{lem:Ni} we shall bound~$N_{S,i+1}$ using 
the following bounded differences inequality for Bernoulli variables, see~\cite[Corollary~1.4]{warnke2016method} and~\cite[Theorem~3.8]{McDiarmid1998}. 
\begin{lemma}\label{thm:BDI}% [Bounded differences inequality for Bernoulli variables]
Let $(\xi_\alpha)_{\alpha \in \cI}$ be a finite family of independent random variables with~$\xi_\alpha \in \{0,1\}$.
Let $f:\{0,1\}^{|\cI|} \to \RR$ be a function, and
assume that there exist numbers $(c_\alpha)_{\alpha \in \cI}$ such that the
following holds for all $z=(z_{\alpha})_{\alpha \in \cI} \in \{0,1\}^{|\cI|}$ and $z'=(z'_{\alpha})_{\alpha \in \cI} \in \{0,1\}^{|\cI|}$:
$|f(z)-f(z')| \le c_{\beta}$ if $z_{\alpha} = z'_{\alpha}$ for all~$\alpha \neq \beta$.
Define $X:= f\bigl((\xi_\alpha)_{\alpha \in \cI}\bigr)$, $V := \sum_{\alpha \in \cI}c_\alpha^2 \P(\xi_{\alpha}=1)$, and $C:=\max_{\alpha \in \cI}c_\alpha$.
Then, for all~$t \ge 0$,
\begin{equation}\label{eq:BDI}
\P(|X - \E X| \ge t) 
\: \le \:
2 \cdot \exp\biggpar{-\frac{t^2}{2(V+C t)}}.
\end{equation}
\end{lemma}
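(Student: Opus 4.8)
The plan is to run the standard Doob--martingale argument, but to replace the crude Azuma/Hoeffding estimate for the martingale increments by a Bennett-type bound on their conditional moment generating functions; the whole point of the statement is that the variance proxy is the \emph{probability-weighted} sum $V=\sum_\alpha c_\alpha^2\P(\xi_\alpha=1)$ rather than $\tfrac14\sum_\alpha c_\alpha^2$, and this extra factor $\P(\xi_\alpha=1)$ must be carried through the increment analysis. Concretely, I would enumerate $\cI=\{1,\dots,N\}$, set $\cF_k:=\sigma(\xi_1,\dots,\xi_k)$ and $Y_k:=\E[X\mid\cF_k]$, so that $(Y_k)_{0\le k\le N}$ is a martingale with $Y_0=\E X$ and $Y_N=X$.

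The first step is to pin down the increments $D_k:=Y_k-Y_{k-1}$. Writing $g_k(x):=\E\bigl[f(\xi_1,\dots,\xi_{k-1},x,\xi_{k+1},\dots,\xi_N)\mid\cF_{k-1}\bigr]$ for $x\in\{0,1\}$ and $p_k:=\P(\xi_k=1)$, independence of $\xi_k$ from $\cF_{k-1}$ gives $Y_{k-1}=(1-p_k)g_k(0)+p_kg_k(1)$ and $Y_k=g_k(\xi_k)$. The bounded-difference hypothesis, applied with $\beta=k$ to the two function values inside the expectation defining $g_k$, yields $|a_k|\le c_k$ for $a_k:=g_k(1)-g_k(0)$. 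Hence, conditionally on $\cF_{k-1}$, $D_k$ is the centered two-point variable taking the value $-p_ka_k$ with probability $1-p_k$ and $(1-p_k)a_k$ with probability $p_k$; in particular $\E[D_k\mid\cF_{k-1}]=0$, $|D_k|\le|a_k|\le c_k\le C$, and $\E[D_k^2\mid\cF_{k-1}]=p_k(1-p_k)a_k^2\le p_kc_k^2$.

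Next I would bound the conditional MGF. Using that $u\mapsto(e^u-1-u)/u^2$ is nondecreasing on $\RR$ together with $|D_k|\le C$ and $\E[D_k\mid\cF_{k-1}]=0$, the elementary inequality $e^{\lambda z}\le 1+\lambda z+z^2(e^{\lambda C}-1-\lambda C)/C^2$ for $|z|\le C$ and $\lambda>0$ gives, after taking conditional expectations, $\E[e^{\lambda D_k}\mid\cF_{k-1}]\le\exp\bigl(p_kc_k^2(e^{\lambda C}-1-\lambda C)/C^2\bigr)$. Multiplying these bounds along the martingale via the tower property, and using $\sum_kp_kc_k^2=V$, gives $\E[e^{\lambda(X-\E X)}]\le\exp\bigl(V(e^{\lambda C}-1-\lambda C)/C^2\bigr)$. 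Markov's inequality and optimizing over $\lambda>0$ (the optimal choice being $\lambda=C^{-1}\log(1+Ct/V)$) yield Bennett's inequality $\P(X-\E X\ge t)\le\exp\bigl(-VC^{-2}h(Ct/V)\bigr)$ with $h(u)=(1+u)\log(1+u)-u$; relaxing with the elementary estimate $h(u)\ge u^2/(2(1+u))$ then gives $\P(X-\E X\ge t)\le\exp\bigl(-t^2/(2(V+Ct))\bigr)$. Applying the same argument to $-f$, which satisfies the hypotheses with the same constants $(c_\alpha)$ and hence the same $V$ and $C$, controls the lower tail, and a union bound over the two one-sided events produces the factor $2$ in~\eqref{eq:BDI}.

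The only delicate point is the transition from the almost-sure bound $|D_k|\le c_k$ to the probability-weighted bound $\E[D_k^2\mid\cF_{k-1}]\le p_kc_k^2$: this is where the explicit two-point structure of $D_k$ is essential, since a generic $c_k$-Lipschitz increment would only give variance of order $c_k^2$. Feeding the surviving factor $p_k$ into a Bennett-type rather than a Hoeffding-type MGF estimate is exactly what upgrades $\sum_\alpha c_\alpha^2$ to $V$ in the exponent; the remaining steps are routine.
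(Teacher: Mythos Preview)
Your argument is correct. The paper does not actually prove this lemma: it is stated with a pointer to \cite[Corollary~1.4]{warnke2016method} and \cite[Theorem~3.8]{McDiarmid1998}, and then used as a black box. What you have written is precisely the standard proof that those references contain --- the Doob martingale on the Bernoulli coordinates, the observation that each increment is (conditionally) a centered two-point variable with variance at most $p_kc_k^2$, and then the Bennett MGF bound relaxed to Bernstein form. Your identification of the key step is also on target: it is exactly the two-point structure of $D_k$ (available because the $\xi_\alpha$ are $\{0,1\}$-valued) that buys the factor $p_k$ in the conditional variance and hence replaces $\sum_\alpha c_\alpha^2$ by $V=\sum_\alpha c_\alpha^2\P(\xi_\alpha=1)$ in the exponent.
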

\begin{proof}[Proof of \refL{lem:Ni}]
To avoid clutter, we henceforth omit the conditioning on~$\cN_i$ from our notation. 
%Note that this conditional probability space depends only on $|\cC_i| + |E_i|$ independent Bernoulli random variables, 
% namely $\indic{K \in \Gamma_i}$ for each clique~$K \in \cC_i$ and $\indic{e \in S_i}$ for each edge~$e \in E_i$.
%
Fix~${S \subseteq [n]}$ with~${|S| \le k_i-1}$. 
Gearing up to apply~\refL{thm:BDI} to~$N_{S,i+1}$, 
note that the associated parameter~$V$ is given~by
\begin{equation}\label{eq:sigma:NSi}
V = \sum_{K \in \cC_i} c_K^2 \cdot q_i + \sum_{e \in E_i} \hc_e^2 \cdot \zeta_{e, i} ,
\end{equation}
where $c_K$ is an upper bound on how~much $N_{S, i+1}$ can~change if we~alter whether the clique~$K$ is in~$\Gamma_i$ or~not, 
and $\hc_e$ is an upper bound on how~much $N_{S, i+1}$ can~change if we~alter whether the edge~$e$ is in~$S_i$ or~not. 
To estimate $c_K$ and $\hc_e$, note that any edge in $S \times \{w\}$ uniquely determines $w$. 
By definition~\eqref{def:NSi} of $N_{S, i+1}$, it follows that $\hc_e \leq 1$ and $c_K \leq \tbinom{k_i}{2} \leq k^2$. 
In addition, the number of edges $e \in E_i$ with $\hc_e \neq 0$ is at most $N_{S, i} \cdot |S|$.
Similarly, the number of cliques $K \in \cC_i$ with $c_K \neq 0$ is at most $N_{S, i} \cdot |S| \cdot \max_{e} |\cC_{e, k_i, i}| \leq N_{S, i} |S| \cdot (1 + \eps) \mu_{2, k_i, i}$, where we used that~$\cN_i$ implies~$\cX_i$ (as established above) to bound~$|\cC_{e, k_i, i}|$ via~\eqref{eq:XSji}. 
Since~$(1 + \eps) \mu_{2, k_i, i} \cdot q_i = k^{-\cc}$ by definition~\eqref{def:qi:zeta} of~$q_i$,  
and~$\zeta_{e,i} \ll k^{-\tau}$ by the calculation above~\eqref{eq:svi}, 
using~$|S| \le k$ and~$\cc \ge 5$ we infer that 
\begin{align*}
	 V &\leq N_{S,i} |S| \cdot k^{-\cc} \cdot k^4 + N_{S,i} |S| \cdot k^{-\cc} \le 2N_{S,i} \le 4\lambda_{|S|,i} = \Theta(\lambda_{|S|,i+1}) ,
\end{align*}
where we used \eqref{eq:mainnibble} and $i\deps \le I\deps \le n^{o(1)-2\beps} \ll 1$ to bound~$N_{S,i}$. 
Invoking inequality~\eqref{eq:BDI} of \refL{thm:BDI} with~$C=k^2$, noting~$C \deps \le n^{o(1)-2\beps} \ll 1$ it follows that
\begin{equation*}\label{eq:Ni:BDI}
\Pr\bigpar{|N_{S,i+1}-\E N_{S,i+1}| \ge 0.5 \deps \lambda_{|S|,i+1}} \le  2 \cdot \exp\bigpar{- \Theta(\eps^4 \lambda_{|S|,i+1})} \le n^{-\omega(k)} ,
\end{equation*}
where the last estimate is analogous to~\eqref{eq:N0:Chern}.
To complete the proof %of \refL{lem:Ni} 
it thus suffices to show that
\begin{equation}\label{eq:Ni:Exp}
\bigabs{\E N_{S,i+1}-\lambda_{|S|,i+1}} \le (i+1.5)\deps \lambda_{|S|,i+1} . 
\end{equation}
Indeed, $\P(\neg \cN_{i+1} \mid \cN_i) \leq n^{-\omega(1)}$ then follows by taking a union bound over all~$n^{O(k)}$ sets~$S$. 

Turning to the remaining proof of~\eqref{eq:Ni:Exp}, note that by construction
\begin{equation}\label{eq:Ni:Exp:1}
    \E N_{S, i+1} = \sum_{\substack{w \in V\setminus S: \\ S \times \{w\} \subseteq E_i}} \P \bigpar{S \times \{w\} \subseteq E_{i+1}}.
\end{equation}
Let~$U := \bigcup_{e \in F} \cC_{e, k_i, i}$. 
Since~$\cN_i$ implies~$\cX_i$ we obtain $(1 + \eps) \mu_{2, k_i, i} \ge |\cC_{e, k_i, i}|$ via \eqref{eq:XSji}, 
so by definition of~$E_{i+1}$ and definition~\eqref{def:qi:zeta} of $\zeta_{e,i}$ it follows that 
\begin{equation}\label{eq:Ni:Exp:2}
\begin{split}
		\P\lrpar{S \times \{w\} \subseteq E_{i+1}} 
		&= (1-q_i)^{|U|} \cdot \hspace{-0.75em}\prod_{e \in S \times \{w\}} \hspace{-0.75em}(1 - \zeta_{e, i}) \\
		& = (1-q_i)^{|U| - \sum_{e \in S \times \{w\}} |\cC_{e, k_i, i}|} \cdot (1-q_i)^{|S| (1 + \eps) \mu_{2, k_i, i}} .
\end{split}
\end{equation}
Recalling the definition~\eqref{def:qi:zeta} of~$q_i$, 
using estimates~\eqref{eq:XSji}--\eqref{eq:pi:ki} we infer that 
\begin{equation*}
\begin{split}
q_i \cdot \Bigabs{|U| - \hspace{-0.25em}\sum_{e \in S \times \{w\}} |\cC_{e, k_i, i}|} & \le q_i\sum_{u \neq v \in S}|\cC_{\{u,v,w\},k_i,i}| \le k^2 \mu_{3, k_i, i}/\mu_{2, k_i, i} \\
& \le k^2/\bigpar{\Omega(n/k_i)p_i^2} \le n^{-1+\ca+o(1)} \ll n^{-2\ca}= \eps^2. 
% \frac{\mu_{s,j,i}}{\mu_{s+1,j,i}} = \frac{n-s}{j-s} \cdot p_i^s
\end{split}
\end{equation*}
We similarly obtain~$q_i \cdot |S| (1 + \eps) \mu_{2, k_i, i} = |S|/k^\cc \le 1$  
and~$q_i \le \bigpar{\Omega(n/k_i) p_i^{(k_i+1)/2}}^{-(k_i-2)} \le n^{-1+\ca+o(1)} \ll \deps$. 
% needs k_i \ge 3: (k_i-2) \ge 1 and (k_i+1)/2 \le k_i-1
% \mu_{2,k_i,i} = \binom{n-2}{k_i-2} p_i^{\binom{k_i}{2} - 1} \ge \lrpar{\frac{n-k_i}{k_i-2} \cdot p_i^{(k_i+1)/2}}^{k_i-2}
Inserting~$1-q_i=e^{-(1+O(q_i))q_i}$ into~\eqref{eq:Ni:Exp:2}, 
using~$e^{o(\deps)}=1+o(\deps)$ it routinely follows~that
\begin{equation*}
		\P\lrpar{S \times \{w\} \subseteq E_{i+1}} 
%		= e^{o(\deps)} \cdot e^{-|S|/k^\cc+o(\deps)} 
		= \bigpar{1+o(\deps)} \cdot e^{-|S|/k^\cc}.
\end{equation*}
Recalling~\eqref{eq:Ni:Exp:1} and $(i+1) \deps \le I \deps \ll \eps \ll 1$, 
using~\eqref{eq:mainnibble} and~$\lambda_{|S|,i} \cdot e^{-|S|/k^\cc} = \lambda_{|S|,i+1}$ we infer that
\[
\E N_{S, i+1} = N_{S,i} \cdot \bigpar{1+o(\deps)} e^{-|S|/k^\cc} = \Bigpar{1 \pm \bigpar{i+1+o(1)} \deps } \cdot  \lambda_{|S|,i+1},
\]
which establishes~\eqref{eq:Ni:Exp} with room to spare, completing the proof of \refL{lem:Ni}. 
\end{proof}
For the interested reader we remark that the arguments of this section,  
and thus the proof of \refT{thm:mainpacking}, 
carry over to any (random or deterministic) initial graph~$G_0$ for which \refL{lem:N0} remains true.

\section{Random greedy edge coloring algorithm}\label{sec:edgechr}% 
In this section we prove \refT{thm:ps:random} by showing that the following simple random greedy algorithm 
is likely to produce the desired proper edge coloring of the random edges from the hypergraph~$\cH$ (allowing for repeated edges), 
using the colors~$[q]=\{1, \ldots, q\}$ for suitable~$q \ge 1$. 
For $i\ge 0$, we sequentially choose an edge~$e_{i+1} \in E(\cH)$ uniformly at random, 
and then assign~$e_{i+1}$ a color~$c$ chosen uniformly at random from all colors in~$[q]$ that are still available at~$e_{i+1}$, 
i.e., which have not been assigned to an edge~$e_j$ with $e_j\cap e_{i+1}\neq \emptyset$ and $j\le i$ 
(this also ensures the usage of different colors for each occurrence of the same edge).  
This random greedy coloring algorithm terminates when no more colors are available at some edge~$e \in E(\cH)$.

\subsection{Dynamic concentration of key variables: proof of \refT{thm:ps:random}}\label{sec:dynamic}
Our main goal is to understand the evolution of the colors available for each edge~$e \in E(\cH)$, 
i.e., the size of~$Q_{e}(i)$, 
where for any set of vertices~$S\subseteq V(\cH)$ we more generally define 
\begin{equation}\label{def:QS}
Q_S(i) := \bigcpar{c \in [q] : \: \text{color $c$ not assigned to any 
edge~$f \in \{e_j: 1\le j\le i \}$ with~$f \cap S \neq \emptyset$}} .
\end{equation}
At the beginning of the algorithm we have~$|Q_{e}(0)|=q$. 
In order to keep track of the number of available colors~$|Q_{e}(i)|$, 
we need to understand changes in the colors assigned to edges adjacent to the vertices of~$e$. 
To take such changes into account, 
for all vertices $v \in V(\cH)$ and colors~$c \in [q]$ we introduce 
\begin{equation}\label{def:Yvc}
Y_{v,c}(i) := \bigcpar{f \in E(\cH): \: \text{$v \in f$ and $c \in Q_{f \setminus \{v\}}(i)$}},
\end{equation}
which in case of~$c \in Q_{\{v\}}(i)$ denotes the set of all edges adjacent to~$v$ that could still be colored by~$c$ % in the next step 
(since for any $f \in Y_{v,c}(i)$ then~${c \in Q_{f \setminus \{v\}}(i) \cap Q_{\{v\}}(i) = Q_{f}(i)}$ holds).
Note that initially $|Y_{v,c}(0)| = \deg_{\cH}(v)$.

Our main technical result for the random greedy algorithm 
shows that, when $q \approx \kr m/n$ colors are used, 
then the above-mentioned key random variables closely follow the %deterministic 
trajectories~$|Q_{e}(i)|\approx \hq(t)$ and $|Y_{v,c}(i)|\approx \hy(t)$
during the first ${m_0 \approx (1-\gamma)m}$ steps,  
tacitly using the continuous time scaling
%where we tacitly used the continuous time scaling 
\begin{equation}\label{def:t}
t = t(i,m) := i /m.
\end{equation}
In particular, $\min_{e \in E(\cH)}|Q_e(m_0)|>0$ 
ensures that the algorithm properly colors the first~$m_0$ edges using at most~$q$ colors, 
as no edge has run out of available colors.  
The form of the trajectories~\eqref{eq:Qi}--\eqref{def:hqye} 
 can easily be predicted via modern
(pseudo-random or expected one-step changes based) heuristics,  
see Appendix~\ref{sec:pseudo:int}. 
\begin{theorem}[Dynamic concentration of the variables]\label{thm:main:alg}
For all reals~$\gamma \in (0,1)$ and~$\sigma,\balp > 0$ with
\begin{equation}\label{cond:const}
\balp \log(1/\gamma) \le \sigma/30
\end{equation}
there is~$n_0=n_0(\sigma,\balp)>0$ such that, for all integers~${n \ge n_0}$, ${2 \le \kr \le \balp \log n}$ 
and all reals~${n^{1+\sigma} \le m \le n^{\kr n^{\sigma/4}}}$, ${D > 0}$, the following holds
for every $n$-vertex $\kr$-uniform hypergraph~$\cH$ satisfying 
the degree and codegree assumptions~\eqref{as:degcodeg}. 
With probability at least~${1 - m^{-\omega(\kr)}}$, we have ${\min_{e \in E(\cH)}|Q_e(i)|>0}$ and 
\begin{align}
\label{eq:Qi}
|Q_{e}(i)| \: &= \: \bigpar{1\pm \he(t)} \cdot \hq(t)     \hspace{-4.0em}&&\hspace{-4.0em}\text{for all $e \in E(\cH)$}, \\
\label{eq:Yuvi}
|Y_{v,c}(i)| \: &= \: \bigpar{1\pm \he(t)} \cdot \hy(t)    \hspace{-4.0em}&&\hspace{-4.0em}\text{for all $v \in V(\cH)$ and $c \in [q]$,}
\end{align}%
for all $0 \le i \le m_0:= \floor{\bigpar{1-\gamma} m}$, where~$q:=\floor{\kr m/n}$ and 
\begin{gather}
\label{def:hqye}
\hq(s) := (1-s)^\kr q,  
\qquad 
\hy(s) := (1-s)^{\kr-1} D 
\quad \text{ and } \quad 
\he(s) := (1-s)^{-9\kr} n^{-\sigma/3}.
\end{gather}
\end{theorem}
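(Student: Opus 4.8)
The plan is to prove \refT{thm:main:alg} via the differential equation method: track the random variables $|Q_e(i)|$ and $|Y_{v,c}(i)|$ along the random greedy coloring process and show that, with probability $1-m^{-\omega(\kr)}$, they stay within the claimed error bands $\bigpar{1\pm\he(t)}\hq(t)$ and $\bigpar{1\pm\he(t)}\hy(t)$ up to step $m_0$. First I would set up the one-step heuristics: conditioning on the history $\cF_i$ up to step $i$, a new edge $e_{i+1}$ is uniform in $E(\cH)$ and receives a uniformly random available color at $e_{i+1}$. The key expected-change computations are (i) $\E\bigsqpar{|Q_e(i+1)|-|Q_e(i)| \mid \cF_i} \approx -|Q_e(i)|\cdot\kr/(n\cdot\hq/q) \cdot (\text{prob.\ a random color at }e_{i+1}\text{ kills a given color of }e)$, which after simplification should yield the logarithmic-derivative relation $\hq'(s)/\hq(s) = -\kr/(1-s)$, consistent with $\hq(s)=(1-s)^\kr q$; and (ii) an analogous computation for $|Y_{v,c}(i)|$ giving $\hy'(s)/\hy(s)=-(\kr-1)/(1-s)$, i.e.\ $\hy(s)=(1-s)^{\kr-1}D$. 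The error function $\he(s)=(1-s)^{-9\kr}n^{-\sigma/3}$ is chosen so that it grows fast enough to absorb the accumulated martingale fluctuations and the error in the deterministic approximation, while still staying $o(1)$ for $s\le 1-\gamma$ given the constraint $\balp\log(1/\gamma)\le\sigma/30$.

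The technical engine is a supermartingale/submartingale argument for the normalized deviations. For each edge $e$ I would define $Q^{\pm}_e(i) := \pm\bigpar{|Q_e(i)| - \hq(t)} - \he(t)\hq(t)$ (and similarly $Y^\pm_{v,c}$), stop the process at the first time any band is violated, and show each stopped sequence is a supermartingale. This requires: (a) a \emph{trend} estimate showing the conditional expected one-step change of $|Q_e(i)|$ agrees with $\hq(t+1/m)-\hq(t)$ up to an error small compared to the $\he$-margin — this uses the degree/codegree assumptions \eqref{as:degcodeg} together with the inductive control $|Y_{v,c}(i)|\approx\hy(t)$ to estimate the probability that a random color at $e_{i+1}$ is one of $e$'s available colors, and crucially that $e_{i+1}$ and $e$ share a vertex with the right frequency; (b) a \emph{boundedness} estimate: one step changes $|Q_e(i)|$ by at most $1$ and $|Y_{v,c}(i)|$ by at most $O(\Delta$ of a codegree-type quantity$)$, which the codegree bound $n^{-\sigma}D$ keeps small relative to $\hy$; and (c) Taylor-expanding $\hq,\hy,\he$ to control the second-order terms $\hq''/m$ etc. Then Freedman's inequality (or the Azuma/Hoeffding-type bound for supermartingales with bounded increments) gives each individual band-violation probability at most $\exp(-\Theta(\he^2\hq^2 \cdot m / (\text{variance proxy})))$; the exponent should be $\Omega(n^{-2\sigma/3}\hq^2 m/\hq)=\Omega(n^{-2\sigma/3}\hq\cdot m/n \cdot n)$-type and simplify to something like $n^{\Omega(\sigma)}\gg \kr\log m$, so a union bound over $\le |E(\cH)|\cdot q \le m^{O(1)}\cdot n^{O(\kr)}$ many variables and over all $m_0$ time steps still leaves failure probability $m^{-\omega(\kr)}$. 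Here the bound $m\le n^{\kr n^{\sigma/4}}$ is used to ensure $\log m$ is not too large compared to the concentration exponent, and $m\ge n^{1+\sigma}$ ensures $\hq,\hy$ are polynomially large so the relative errors make sense.

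I would organize the induction as a single stopping-time argument: let $T$ be the first step at which \eqref{eq:Qi} or \eqref{eq:Yuvi} fails (or some auxiliary "bad" event occurs), prove that on the event $\{T=i+1\}$ one of the supermartingales must have made an atypically large jump, bound that by Freedman, and union-bound. The auxiliary good events worth isolating are: all $|Y_{v,c}(i)|$ within their band (needed to run the trend estimate for $Q_e$), and perhaps a crude upper bound on how many edges through a fixed vertex can be "active" for a given color. A subtlety is that the $Q$ and $Y$ estimates are mutually dependent — the trend for $Q_e$ needs $Y$-control and vice versa — so the stopping time must be defined by the conjunction of all bands, and the supermartingale property for each variable only needs to hold \emph{before} $T$, which is exactly when all the other bands still hold; this circularity is resolved cleanly by the joint stopping time.

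The main obstacle I expect is handling the large uniformity $\kr=\Theta(\log n)$ in the trend and variance estimates. With constant $\kr$ these are routine, but when $\kr$ grows, factors like $(1-s)^{-\kr}$, binomial-type corrections $(1-1/q)^{\kr}$, and the number of vertices of $e_{i+1}$ that could interfere all carry $\kr$-dependence that must be tracked explicitly; the choice of the aggressive error function $\he(s)=(1-s)^{-9\kr}n^{-\sigma/3}$ with the large exponent $9\kr$ is precisely what buys enough slack, and verifying that $\he(s)\hq(s)$ still dominates all error terms uniformly for $s\le 1-\gamma$ — i.e.\ that $(1-\gamma)^{-9\kr}n^{-\sigma/3}=o(1)$, which needs $9\kr\log(1/\gamma)\le 9\balp\log(1/\gamma)\log n < (\sigma/3)\log n$, hence the hypothesis \eqref{cond:const} — is the delicate accounting. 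A secondary difficulty is that $q=\floor{\kr m/n}$ rather than exactly $\kr m/n$, and $\hq,\hy$ are continuous while the process is discrete, so one must carry $O(\kr/m)$-type discretization errors through; these are harmless but must be checked not to overwhelm $\he$. I would also need the separate observation, noted after the theorem statement, that $\min_e|Q_e(m_0)|>0$ follows from $\hq(1-\gamma)=\gamma^\kr q\ge \gamma^{\balp\log n}\kr m/n\gg 1$ once the band estimate holds, which then yields that the first $m_0\approx(1-\gamma)m$ edges are properly colored with $q=\floor{\kr m/n}$ colors — and since $m_0$ can be taken to be $(1-\gamma')m$ for the $(1+\delta)$-version in \refT{thm:ps:random}, rescaling $m\mapsto m/(1-\gamma')$ converts this into the $\Pr(\ci(\cH_m)\le(1+\delta)\kr m/n)$ statement.
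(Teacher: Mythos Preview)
Your proposal is correct and follows essentially the same approach as the paper: define the shifted variables $Q^{\pm}_e(i)$ and $Y^{\pm}_{v,c}(i)$, use a joint stopping time at the first band violation, verify the supermartingale property via trend estimates (using the degree/codegree assumptions and the inductive bands), bound the one-step changes, and finish with a Freedman-type inequality plus a union bound over all $e$, $v$, $c$. One small correction: the supermartingale inequality already controls \emph{all} time steps simultaneously, so no separate union bound over the $m_0$ steps is needed.
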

\begin{remark}\label{rem:ass}
The assumption~\eqref{cond:const} simply ensures that $\he(t) = (1-t)^{-9\kr}n^{-\sigma/3} \le n^{9\balp \log(1/\gamma)-\sigma/3} \le n^{-\sigma/30} = o(1)$ for all~$0 \le i \le m_0$, 
so that estimates~\eqref{eq:Qi}--\eqref{eq:Yuvi} imply~$|Q_{e}(i)| \sim \hq(t)$ and~$|Y_{v,c}(i)| \sim \hy(t)$. 
\end{remark}
\begin{remark}\label{rem:ass2}
The proof carries over to the case $\gamma=\gamma(n) \to 0$, 
provided that the assumption~\eqref{cond:const} is replaced by~$\kr \log(1/\gamma)/\log n \le \sigma/30$
(to again ensure that~$\he(t) \le n^{-\sigma/30} = o(1)$ holds). % , see \refR{rem:ass} 
\end{remark}
Before giving the differential equation method based proof of this result, we first show how it implies Theorem~\ref{thm:ps:random} 
by slightly increasing the number of edges from~$m$ to~$m'$, 
to ensure that the greedy algorithm properly colors 
the first~$\floor{(1-\gamma)m'} \ge m$ random edges 
using at most~$\floor{\kr m'/n} \le (1+\eps)\kr m/n$ colors.  
\begin{proof}[Proof of Theorem~\ref{thm:ps:random}]
Set~$\gamma:=1-1/(1+\delta)$, so that~$\balp\log(1/\gamma)=\balp\log(1+1/\delta) \le \balp/\delta \le \sigma/30$ implies~\eqref{cond:const}. 
Invoking Theorem~\ref{thm:main:alg} with~$m$ set to~$m':=(1+\delta)m = o(n^{\kr n^{\sigma/4}})$ it follows that, with probability at least~${1 - m^{-\omega(\kr)}}$, 
the greedy algorithm properly colors the first $m_0 := {\floor{(1-\gamma)m'}=\floor{m}=m}$ random edges~$e_1, \ldots, e_m$  
using at most $q:={\floor{\kr m'/n} \le (1+\delta)\kr m/n}$ colors, 
completing the~proof.  
\end{proof}

\subsection{Differential equation method: proof of Theorem~\ref{thm:main:alg}}\label{sec:alg:proof}
In this subsection we prove Theorem~\ref{thm:main:alg} by showing ${\Pr(\neg\cG_{m_0}) \le m^{-\omega(\kr)}}$, 
where~$\cG_j$ denotes the event that ${\min_{e \in E(\cH)}|Q_e(i)|>0}$ 
and estimates~\eqref{eq:Qi}--\eqref{eq:Yuvi} hold for all ${0 \le i \le j}$.
We henceforth tacitly assume ${0 \le i \le m_0}$, 
and also that ${n \ge n_0(\sigma,\balp)}$ is sufficiently large (whenever necessary). 
In particular, estimate~\eqref{eq:Qi} implies ${\min_{e \in E(\cH)}|Q_e(i)|\ge \hq(t)/2 > 0}$ by \refR{rem:ass}. 
To establish~\eqref{eq:Qi}--\eqref{eq:Yuvi} 
using the differential equation method approach to dynamic concentration, 
we introduce the following sequences of auxiliary random variables:
\begin{align}
\label{eq:Qipm}
Q^{\pm}_e(i) & \: := \: \pm \bigsqpar{|Q_e(i)|-\hq(t)} - \he(t)\hq(t)    
\hspace{-2.0em}&&\hspace{-2.0em}\text{for all $e \in E(\cH)$}, \\
\label{eq:Yuvipm}
Y^{\pm}_{v,c}(i) & \: := \: \pm \bigsqpar{|Y_{v,c}(i)|- \hy(t)} - \he(t)\hy(t)    
\hspace{-2.0em}&&\hspace{-2.0em}\text{for all $v \in V(\cH)$ and $c \in [q]$.}
\end{align}
Note that the desired estimates~\eqref{eq:Qi}--\eqref{eq:Yuvi} follow 
when the four inequalities $Q^{\pm}_e(i) \le 0$ and $Y^{\pm}_{v,c}(i) \le 0$ all hold. 
To establish these inequalities, 
in Section~\ref{sec:alg:expected} we first estimate the expected one-step changes of~$|Q_e(i)|$ and~$|Y_{v,c}(i)|$, %  the variables
which in Section~\ref{sec:alg:super} then enables us to show that the sequences~$Q^{\pm}_e(i)$ and $Y^{\pm}_{v,c}(i)$ are supermartingales. 
Next, in Section~\ref{sec:alg:onestep} we bound the one-step changes of the variables, 
which in Section~\ref{sec:alg:estimates} then enables us to invoke a supermartingale inequality 
(that is optimized for the differential equation method, see Lemma~\ref{lem:superm}) 
in order to show that $Q^{\pm}_e(i) \ge 0$ or $Y^{\pm}_{v,c}(i) \ge 0$ are extremely unlikely~events.

\subsubsection{Expected one-step changes}\label{sec:alg:expected}
We first derive estimates for the expected one-step changes 
of the available colors variables~$|Q_e(i)|$ and the available edges variables~$|Y_{v,c}(i)|$, 
tacitly assuming that~${0 \le i \le m_0}$ and~$\cG_i$ hold. 
As we shall see, 
the expected changes~\eqref{eq:Qe:E} and~\eqref{eq:Yvc:E} will be consistent with the 
deterministic approximations~$|Q_e(i+1)|-|Q_e(i)| \approx \hq(t+1/m)-\hq(t) \approx \hq'(t)/m = -\kr (1-t)^{\kr-1} q/m$ 
and~$|Y_{v,c}(i+1)|-|Y_{v,c}(i)| \approx \hy'(t)/m=-(\kr-1) (1-t)^{\kr-2}D/m$, 
which is one motivation for the choice of~$\hq(t)$ and~$\hy(t)$; 
see also~\eqref{eq:Qe:heur:osc}--\eqref{eq:heur:fg} in~Appendix~\ref{sec:pseudo:int}. % for an alternative heuristic. 

To calculate the expectation of the one-step changes~$\D Q_e(i) := |Q_e(i+1)|-|Q_e(i)|$, % of the available colors, 
we consider a color~${c \in Q_e(i)}$ and the event that~$c \not\in Q_e(i+1)$. 
By definition~\eqref{def:QS} of $Q_e(i)$ this only occurs if the algorithm chooses an edge~$f$ with $f \cap e \neq \emptyset$, and then assigns the color~$c$ to~$f$. 
By definition~\eqref{def:Yvc} of~$Y_{v,c}(i)$ this color assignment is only possible if~${f \in \bigcup_{v \in e}Y_{v,c}(i)}$,
as~${c \in Q_{e}(i) \subseteq Q_{\{v\}}(i)}$ for any~${v \in e}$. 
Since the algorithm chooses both the edge ${e_{i+1} \in E(\cH)}$ and the color ${c \in Q_{e_{i+1}}(i)}$ uniformly at random, it follows~that
\begin{equation}\label{eq:Qe:E:0}
\E(\D Q_e(i) \mid \cF_{i}) = - \sum_{c \in Q_e(i)}\sum_{f \in \bigcup_{v \in e}Y_{v,c}(i)} \frac{1}{|E(\cH)| \cdot |Q_f(i)|} ,
\end{equation}
where $\cF_i$ denotes, as usual, the natural filtration associated with the algorithm after~$i$ steps (which intuitively keeps track of the history algorithm, i.e., contains all the information available up to step~$i$). 
Recalling the codegree assumption~\eqref{as:degcodeg} and~$\kr = O(\log n)$,
note that 
the cardinality of the union $\bigcup_{v \in e}Y_{v,c}(i)$ differs from the sum $\sum_{v \in e}|Y_{v,c}(i)|$ 
by at most $\sum_{v\neq w \in e}\deg_{\cH}(v,w) < n^{-\sigma/2}D < \he(t)\hy(t)$. 
%the cardinality of the union $\bigcup_{v \in e}Y_{v,c}(i)$ equals the sum $\sum_{v \in e}|Y_{v,c}(i)|$ 
%up to an additive error of at most $\sum_{v\neq w \in e}\deg_{\cH}(v,w) < n^{-\sigma/2}D < \he(t)\hy(t)$. 
The degree assumption~\eqref{as:degcodeg} also implies $\kr \cdot |E(\cH)|=\sum_{v \in V(H)}\deg_\cH(v) = n \cdot (1 \pm n^{-\sigma})D$. 
Using estimates~\eqref{eq:Qi}--\eqref{eq:Yuvi}, it follows that 
\begin{equation}\label{eq:Qe:E:1}
\E(\D Q_e(i) \mid \cF_{i}) = - \frac{(1\pm \he)\hq  \cdot \kr \cdot (1 \pm 2 \he)\hy }{(1 \pm n^{-\sigma})nD/\kr \cdot  (1 \pm \he)\hq } ,
\end{equation}
where we suppressed the dependence on~$t$ to avoid clutter in the notation. 
Noting ${|\kr m/n-q| \le 1 < n^{-\sigma} q}$ and ${n^{-\sigma} < \he(t) = o(1)}$, using $\hy(t) = (1-t)^{\kr-1} D$ we routinely arrive~at
\begin{equation}\label{eq:Qe:E}
\E(\D Q_e(i) \mid \cF_{i}) 
%= - (1\pm 6\he) \cdot k^2 (1-t)^{k-1}/n . 
= - \bigpar{1\pm 7\he(t)} \cdot \kr (1-t)^{\kr-1} q/m . 
\end{equation}

To calculate the expectation of the one-step changes~$\D Y_{v,c}(i) := |Y_{v,c}(i+1)|-|Y_{v,c}(i)|$, % of the available edges, 
we consider an edge~$f \in Y_{v,c}(i)$ and the event that~$f \not\in Y_{v,c}(i+1)$. 
By definition~\eqref{def:Yvc} of $Y_{v,c}(i)$ this only occurs if the algorithm chooses an edge~$e$ with $e \cap (f\setminus\{v\}) \neq \emptyset$, and then assigns the color~$c$ to~$e$, 
which in turn is only possible if~$e \in \bigcup_{w \in f \setminus\{v\}}Y_{w,c}(i)$. 
Proceeding similarly to~\eqref{eq:Qe:E:0}, it follows~that 
\begin{equation}\label{eq:Yvc:E:0}
\E(\D Y_{v,c}(i) \mid \cF_{i}) = - \sum_{f \in Y_{v,c}(i)}\sum_{e \in \bigcup_{w \in f \setminus\{v\}}Y_{w,c}(i)} \frac{1}{|E(\cH)| \cdot |Q_e(i)|} ,
\end{equation}
where~$|\bigcup_{w \in f \setminus\{v\}}Y_{w,c}(i)|$ differs from $\sum_{w \in f \setminus\{v\}}|Y_{w,c}(i)|$
by at most $\sum_{u\neq w \in f}\deg_{\cH}(u,w) < \he(t) \hy(t)$. 
Proceeding similarly to~\eqref{eq:Qe:E:1}--\eqref{eq:Qe:E}, 
using~$|q -\kr m/n| \le 1 < n^{-\sigma} \kr m/n$ and~${n^{-\sigma} < \he(t) = o(1)}$ 
it follows that 
\begin{equation}\label{eq:Yvc:E}
\E(\D Y_{v,c}(i) \mid \cF_{i}) = - \frac{(1\pm \he)\hy  \cdot (\kr-1) \cdot (1 \pm 2 \he)\hy }{(1 \pm n^{-\sigma})nD/\kr \cdot  (1 \pm \he)\hq } = - \bigpar{1\pm 7\he(t)} \cdot (\kr-1) (1-t)^{\kr-2}D/m . 
\end{equation}

\subsubsection{Supermartingale conditions}\label{sec:alg:super}
We now show that the expected one-step changes of the auxiliary variables $Q^{\pm}_e(i)$ and $Y^{\pm}_{v,c}(i)$ 
are negative (as~required for supermartingales), 
tacitly assuming that~${0 \le i \le m_0-1}$ and~$\cG_i$ hold. 
As we shall see, 
the main terms in the expected changes~\eqref{eq:Qepm:E:0} and~\eqref{eq:Yvcpm:E:0} 
will cancel due to the estimates of \refS{sec:alg:expected}, 
and the careful choice of~$\he(t)$ then ensures that the resulting expected changes~\eqref{eq:Qepm:E} and~\eqref{eq:Yvcpm:E} are indeed negative 
(by~ensuring that the ratios~$e'_X(t)/e_X(t)$ of the below-defined error functions~$e_X(t)$ are sufficiently~large).

% We start with the one-step changes~$\D Q^{\pm}_e(i) := Q^{\pm}_e(i+1)-Q^{\pm}_e(i)$, and define ${e_Q(t) := \he(t)\hq(t)} = {(1-t)^{-8k}n^{-\sigma/3}q}$.  
For the one-step changes~$\D Q^{\pm}_e(i) := Q^{\pm}_e(i+1)-Q^{\pm}_e(i)$, set ${e_Q(s) := \he(s)\hq(s)} = {(1-s)^{-8\kr}n^{-\sigma/3}q}$. 
Recalling $t=i/m$, by applying Taylor's theorem with remainder %to~$\hq(t)$ and $e_Q(t)$ 
it follows that 
\begin{equation}\label{eq:Qepm:E:0}
\begin{split}
\E(\D Q^{\pm}_e(i) \mid \cF_{i}) 
&= \pm \biggsqpar{\E(\D Q_e(i) \mid \cF_{i}) - \Bigsqpar{\hq\bigpar{t+1/m}-\hq(t)}} - \Bigsqpar{e_Q\bigpar{t+1/m} - e_Q(t)} \\
& = \pm \biggsqpar{\E(\D Q_e(i) \mid \cF_i) - \frac{\hq'(t)}{m}} - \frac{e'_Q(t)}{m}  + O\biggpar{\max_{s \in [0,m_0/m]}\frac{|\hq''(s)|+|e''_Q(s)|}{m^2}} .
\end{split}
\end{equation}
The key point is that the derivative~$\hq'(t)/m = -\kr (1-t)^{\kr-1} q/m$ equals the main term in~\eqref{eq:Qe:E}, and 
that the other term in~\eqref{eq:Qe:E} satisfies~$7\he(t) \cdot \kr (1-t)^{\kr-1} q/m = 7\kr (1-t)^{-1}e_Q(t) /m$. 
Furthermore, using the estimate from Remark~\ref{rem:ass} 
together with~$m \ge n^{1+\sigma}$ and $\kr = O(\log n)$, for all~$s \in [0,m_0/m]$ it is routine to see that 
\begin{equation}\label{eq:Qepm:error}
\begin{split}
\frac{|\hq''(s)|+|e''_Q(s)|}{m}\le O\biggpar{\frac{\kr^2 q + \kr^2 (1-s)^{-8\kr-2} n^{-\sigma/3}q}{m}} = o(n^{-\sigma/3}q) .
\end{split}
\end{equation}
Putting things together, now the crux is that~$e'_Q(t) = 8\kr(1-t)^{-1} e_Q(t) = \Omega(n^{-\sigma/3}q)$ implies 
\begin{equation}\label{eq:Qepm:E}
\begin{split}
\E(\D Q^{\pm}_e(i) \mid \cF_{i}) 
& \le \frac{7\kr (1-t)^{-1} e_Q(t) -e'_Q(t) + o(n^{-\sigma/3}q)}{m} < 0 .\end{split}
\end{equation}

%Turning to the one-step changes~$\D Y^{\pm}_{v,c}(i) := Y^{\pm}_{v,c}(i+1)-Y^{\pm}_{v,c}(i)$, we define ${e_Y(t) := \he(t)\hy(t)} = {(1-t)^{-8\kr-1}n^{-\sigma/3}D}$. 
For the one-step changes~$\D Y^{\pm}_{v,c}(i) := Y^{\pm}_{v,c}(i+1)-Y^{\pm}_{v,c}(i)$, set ${e_Y(s) := \he(s)\hy(s)} = {(1-s)^{-8\kr-1}n^{-\sigma/3}D}$. 
Proceeding similarly to~\eqref{eq:Qepm:E:0}, we obtain
\begin{equation}\label{eq:Yvcpm:E:0}
\begin{split}
\E(\D Y^{\pm}_{v,c}(i) \mid \cF_{i}) 
& = \pm \biggsqpar{\E(\D Y_{v,c}(i) \mid \cF_i) - \frac{\hy'(t)}{m}} - \frac{e'_Y(t)}{m}  + O\biggpar{\max_{s \in [0,m_0/m]}\frac{|\hy''(s)|+|e''_Y(s)|}{m^2}} .
\end{split}
\end{equation}
The key point is that the derivative~$\hy'(t)/m = -(\kr-1) (1-t)^{\kr-2} D/m$ equals the main term in~\eqref{eq:Yvc:E}, and 
that the other term in~\eqref{eq:Yvc:E} satisfies~$7\he(t) \cdot  (\kr-1) (1-t)^{\kr-2} D/m = 7(\kr-1) (1-t)^{-1}e_Y(t)/m$. 
Analogously to~\eqref{eq:Qepm:error}, it is routine to see that $|\hy''(s)|+|e''_Y(s)| = o(n^{-\sigma/3}Dm)$ for all~$s \in [0,m_0/m]$. 
Putting things together similarly to~\eqref{eq:Qepm:E},
here the crux is that~$e'_Y(t) = (8\kr+1)(1-t)^{-1} e_Y(t) = \Omega(n^{-\sigma/3}D)$ implies 
\begin{equation}\label{eq:Yvcpm:E}
\begin{split}
\E(\D Y^{\pm}_{v,c}(i) \mid \cF_{i}) 
& \le \frac{7(\kr-1) (1-t)^{-1} e_Y(t) -e'_Y(t) + o(n^{-\sigma/3}D)}{m} < 0 .\end{split}
\end{equation}

\subsubsection{Bounds on one-step changes}\label{sec:alg:onestep}
We next derive bounds on the one-step changes of the variables~$|Q_e(i)|$ and~$|Y_{v,c}(i)|$ 
(as~required by the supermartingale inequality %\refL{lem:superm}
in Section~\ref{sec:alg:estimates}), 
tacitly assuming that~${0 \le i \le m_0}$ and~$\cG_i$ hold. 
As we shall see, the expected changes~\eqref{eq:Qe:OSC:E} and~\eqref{eq:Yvc:OSC:E} 
are easy to bound due to step-wise monotonicity of the variables.

%By definition~\eqref{def:QS}, 
The one-step changes~$\D Q_e(i) = |Q_e(i+1)|-|Q_e(i)|$ of the available colors satisfy
\begin{equation}\label{eq:Qe:OSC:WC}
|\D Q_e(i)| \le 1. 
\end{equation}
Since~$|Q_e(i)|$ is step-wise decreasing, 
by inserting ${\he(t)=o(1)}$ and ${\kr (1-t)^{\kr-1} \le \kr}$ into~\eqref{eq:Qe:E} we obtain 
\begin{equation}\label{eq:Qe:OSC:E}
\E(|\D Q_e(i)| \mid \cF_{i}) = - \E(\D Q_e(i) \mid \cF_{i}) 
%\le 2k (1-t)^{\kr-1} q/m .
\le 2\kr q/m .
\end{equation}

%By definition~\eqref{def:Yvc}, 
The one-step changes~$\D Y_{v,c}(i) = |Y_{v,c}(i+1)|-|Y_{v,c}(i)|$ of the available edges satisfy 
\begin{equation}\label{eq:Yvc:OSC:WC}
|\D Y_{v,c}(i)| 
%\le \bigabs{\bigcpar{f \in Y_{v,c}(i): e_{i+1}\cap (f \setminus\{v\}) \neq \emptyset}}
\le \sum_{w \in e_{i+1} \setminus\{v\}}\deg_\cH(v,w) \le \kr \cdot n^{-\sigma}D 
\end{equation}
due to the codegree assumption~\eqref{as:degcodeg}.
Since~$|\D Y_{v,c}(i)|$ is step-wise decreasing, 
by inserting ${\he(t)=o(1)}$ and ${(\kr-1) (1-t)^{\kr-2} \le \kr}$ into~\eqref{eq:Yvc:E} we also obtain
\begin{equation}\label{eq:Yvc:OSC:E}
\E(|\D Y_{v,c}(i)| \mid \cF_{i}) = - \E(\D Y_{v,c}(i) \mid \cF_{i}) \le 2\kr D/m .
\end{equation}

\subsubsection{Supermartingale estimates}\label{sec:alg:estimates}
We finally bound $\Pr(\neg\cG_{m_0})$ by focusing on the first step where the estimates~\eqref{eq:Qi}--\eqref{eq:Yuvi} are violated, 
which by the discussion below~\eqref{eq:Qipm}--\eqref{eq:Yuvipm} can only happen if $Q^{\pm}_e(i) \le 0$ or $Y^{\pm}_{v,c}(i) \le 0$ is violated. 
Our main tool for bounding the probabilities of these `first bad events' will be the following Freedman type supermartingale inequality: 
it is optimized for the differential equation method approach to dynamic concentration,
where supermartingales~$S_i$ are constructed by adding a deterministic quantity to a random variable~$X_i$, 
cf.~the definition of~$Q^{\pm}_e(i)$ and $Y^{\pm}_{v,c}(i)$ in~\eqref{eq:Qipm}--\eqref{eq:Yuvipm}. 
Here the convenient point is that \refL{lem:superm} only assumes upper bounds on the one-step changes of~$X_i$ 
(and not of~$S_i$, as usual, cf.~\cite[Lemma~3.4]{bohman2019large}).   
\begin{lemma}\label{lem:superm}
Let~$(S_i)_{i \ge 0}$ be a supermartingale adapted to the filtration $(\cF_i)_{i\ge 0}$. 
Assume that~${S_i=X_i+D_i}$, where $X_i$ is~$\cF_i$-measurable and $D_i$ is~$\cF_{\max\{i-1,0\}}$-measurable.
Writing ${\D X_i:=X_{i+1}-X_i}$, assume that ${\max_{i \ge 0}|\D X_i| \le C}$ and ${\sum_{i \ge 0} \E(|\D X_i| \mid \cF_{i}) \le V}$.  
Then, for all~$z >0$, 
\begin{equation}
\label{eq:superm}
\Pr\bigpar{S_i \ge S_0 + z \text{ for some $i \ge 0$}}
\: \le \:
\exp \biggpar{-\frac{z^2}{2C(V+z)}}. 
\end{equation}
\end{lemma}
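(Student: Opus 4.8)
The plan is to reduce the statement to the classical Freedman inequality by converting the one-step change bound on $X_i$ into a bound on the one-step change of $S_i$, and then controlling the predictable quadratic variation of $S_i$ by that of $X_i$. First I would observe that since $D_{i+1}$ is $\cF_i$-measurable, the increment $\D S_i := S_{i+1}-S_i = \D X_i + (D_{i+1}-D_i)$ has a random part equal only to $\D X_i$; hence $\D S_i - \E(\D S_i \mid \cF_i) = \D X_i - \E(\D X_i \mid \cF_i)$, which has absolute value at most $2C$. Similarly, the conditional variance satisfies $\E\bigpar{(\D S_i - \E(\D S_i\mid\cF_i))^2 \mid \cF_i} = \E\bigpar{(\D X_i-\E(\D X_i\mid\cF_i))^2\mid\cF_i} \le \E(|\D X_i|^2 \mid \cF_i) \le C \cdot \E(|\D X_i|\mid \cF_i)$, using $|\D X_i|\le C$. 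Summing over $i$, the total predictable quadratic variation of $S_i$ is at most $C V$.

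With these two facts in hand I would apply a standard stopped-supermartingale Freedman bound (e.g.\ the form in~\cite[Lemma~3.1]{bohman2019large} or the original Freedman inequality): for a supermartingale $(S_i)$ with increments bounded by $2C$ in the centered sense and predictable quadratic variation bounded by $W$, one has $\Pr(S_i \ge S_0 + z \text{ for some } i) \le \exp\bigpar{-z^2/(2(W + 2Cz/3))}$ or the slightly weaker $\exp\bigpar{-z^2/(2(W+Cz))}$ type bound. Plugging $W \le CV$ gives $\Pr(S_i \ge S_0 + z \text{ for some } i) \le \exp\bigpar{-z^2/(2C(V+z))}$, as claimed (absorbing the numerical constant in the $Cz$ term, which only helps). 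A minor technical point I would address is that the sums involved may be infinite; this is handled by applying the bound to the stopped process at time $\min\{i : \sum_{j<i}\E(|\D X_j|\mid\cF_j) > V \text{ or } i \ge N\}$ and letting $N\to\infty$, noting the event in question is monotone in $N$.

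The only genuine subtlety — and the step I would be most careful about — is that the hypotheses bound $|\D X_i|$ and $\sum \E(|\D X_i|\mid\cF_i)$, but \emph{not} $\E(\D X_i \mid \cF_i)$ directly; since $S_i$ is assumed to be a supermartingale we do get $\E(\D S_i\mid\cF_i)\le 0$, hence $\E(\D X_i\mid\cF_i) \le D_i - D_{i+1}$, and this is exactly what lets the centered increment inherit the bound $2C$ while the drift is pushed into the (harmless, downward) supermartingale structure. Everything else is a routine application of the exponential supermartingale method, so no serious obstacle remains beyond bookkeeping the constants and the stopping-time truncation.
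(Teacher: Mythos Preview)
Your proposal is correct and is essentially the paper's proof. The paper makes the Doob decomposition explicit by defining the martingale $M_i := S_i - \sum_{0 \le j < i}\E(\Delta S_j \mid \cF_j)$, notes $\Delta M_i = \Delta X_i - \E(\Delta X_i \mid \cF_i)$ (exactly your centered-increment observation), bounds $|\Delta M_i| \le 2C$ and $\sum_i \Var(\Delta M_i \mid \cF_i) \le CV$ just as you do, and then applies Freedman to~$M_i$, using $M_i \ge S_i$ (from the supermartingale property) to transfer the bound back to~$S_i$. One small inaccuracy in your write-up: the bound $|\Delta X_i - \E(\Delta X_i \mid \cF_i)| \le 2C$ follows immediately from $|\Delta X_i| \le C$ and does not require the supermartingale property; the supermartingale assumption is used only for the inequality $M_i \ge S_i$ (equivalently, that the drift you subtract is non-positive), which is what lets the Freedman bound on~$M_i$ imply the desired bound on~$S_i$.
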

\begin{proof}
Writing~$\D S_i:=S_{i+1}-S_i$, set~$M_i:=S_i-\sum_{0\le j<i}\E (\Delta S_j\mid \cF_j)$. 
Note that $S_i=X_i+D_i$ implies
\[ \D M_i:=M_{i+1}-M_i=\D S_i-\E(\D S_i\mid \cF_i)=\D X_i -\E(\D X_i\mid \cF_i),   \]
which readily gives $\E(\Delta M_i\mid \mathcal{F}_i)=0$ and $\max_{i\ge 0}|\Delta M_i|\le 2 \cdot C$.
Note that we also have
\begin{equation}\label{eq:var:Mi}
\Var(\D M_i\mid \cF_i)
%=\Var(\D S_i\mid \cF_i)
=\Var(\D X_i\mid \cF_i)
\le \E(\D X_i^2\mid \cF_i)
\le C\cdot \E(|\D X_i|\mid \cF_i) ,  
\end{equation}
so that $\sum_{i\ge 0}\Var(\D M_i\mid \cF_i)\le C \cdot V$. 
Clearly~${M_0=S_0}$. 
Also~$M_i\ge S_i$, since $(S_i)_{i\ge 0}$ is a supermartingale. 
Hence a standard application of Freedman's martingale inequality (see~\cite{freedman1975tail} or~\cite[Lemma~2.2]{warnke2016method}) %to $(M_i)_{i\ge 0}$ 
yields 
\begin{equation}
\label{eq:superm2}
\Pr\bigpar{S_i\ge S_0+z\text{ for some }i\ge 0}
\le \Pr\bigpar{M_i\ge M_0+z \text{ for some $i\ge 0$}}
\le \exp\biggpar{-\frac{z^2}{2(CV+2C \cdot z/3)}},   
\end{equation}
which completes the proof of inequality~\eqref{eq:superm}.
\end{proof}

Turning to the details, we define the stopping time~$I$ as the minimum of~$m_0$ and the first step~$i \ge 0$ where~$\cG_i$ fails. 
Writing~$i \wedge I := \min\set{i,I}$, as usual, by our above discussion it follows that 
\begin{equation}\label{eq:union:Gm}
\begin{split}
\Pr\bigpar{\neg \cG_{m_0}} &\le \sum_{e \in E(\cH)}\sum_{\tau \in \set{+,-}}\Pr\bigpar{Q^{\tau}_{e}(i \wedge I) \ge 0 \text{ for some $i \ge 0$}}\\
& \quad 
+ \sum_{v \in V(\cH)}\sum_{c \in [q]}\sum_{\tau \in \set{+,-}}\Pr\bigpar{Y^{\tau}_{v,c}(i \wedge I) \ge 0 \text{ for some $i \ge 0$}}. 
\end{split}
\end{equation}
Note that initially~$|Q_e(i)|=q$ and~$|Y_{v,c}(0)|=\deg_\cH(v)$, 
which in view of the degree assumption~\eqref{as:degcodeg} 
and the definitions~\eqref{eq:Qipm}--\eqref{eq:Yuvipm} 
of~$Q^{\tau}_e(0)$ and $Y^{\tau}_{v,c}(0)$ 
gives the initial value estimates 
\begin{gather*}
Q^{\tau}_{e}(0 \wedge I) = Q^{\tau}_e(0) 
%= \pm \bigsqpar{q-\hq(0)} - e_Q(0) 
= - \he(0) q = -n^{-\sigma/3}q,\\
Y^{\tau}_{v,c}(0 \wedge I) = 
Y^{\tau}_{v,c}(0) 
%= \pm \bigsqpar{\deg_\cH(v)- \hy(0)} - e_Y(0) 
= O(n^{-\sigma}D) - \he(0) D \le 
-n^{-\sigma/3}D/2 .
\end{gather*}
Noting that the estimates from Sections~\ref{sec:alg:super}--\ref{sec:alg:onestep} apply for~${0 \le i \le I-1}$ (since then ${0 \le i \le m_0-1}$ and $\cG_i$ hold), 
the point is that the stopped sequence~${S_i := Q^{\tau}_{e}(i \wedge I)}$ is a supermartingale with $S_0 = -n^{-\sigma/3}q$, to which Lemma~\ref{lem:superm} can be applied with ${X_i = \tau |Q_e(i \wedge I)}|$, $C=1$ and $V=m_0 \cdot 2\kr q/m  = O(\kr q)$. 
Invoking inequality~\eqref{eq:superm} with~$z=n^{-\sigma/3}q$, 
using~$q = \Omega(\kr n^{\sigma})$ together with $m^{\kr}\le n^{\kr^2 n^{\sigma/4}}$ and~$\kr = O(\log n)$ it follows~that 
\begin{equation}\label{eq:superm:Qe}
\Pr\bigpar{Q^{\tau}_{e}(i \wedge I) \ge 0 \text{ for some $i \ge 0$}} \le \exp\Bigcpar{-\Theta(n^{-2\sigma/3}q/\kr)}
\le \exp\Bigcpar{-\Theta(n^{\sigma/3})} 
\le m^{-\omega(\kr)}.
\end{equation}
Similarly, the sequence~${S_i := Y^{\tau}_{v,c}(i \wedge I)}$ is a supermartingale with $S_0 \le -n^{-\sigma/3}D/2$, to which Lemma~\ref{lem:superm} can be applied with ${X_i = \tau |Y_{v,c}(i \wedge I)}|$, $C=\kr n^{-\sigma}D $ and $V= m_0 \cdot 2\kr D/m  = O(\kr D)$.
Invoking inequality~\eqref{eq:superm} with~$z=n^{-\sigma/3}D/2$, 
it follows analogously to~\eqref{eq:superm:Qe} that 
\begin{equation}\label{eq:superm:Yvc}
\Pr\bigpar{Y^{\tau}_{v,c}(i \wedge I) \ge 0 \text{ for some $i \ge 0$}} \le \exp\Bigcpar{-\Theta(n^{\sigma/3}/\kr^2)} \le m^{-\omega(\kr)} .
\end{equation}
Inserting~\eqref{eq:superm:Qe}--\eqref{eq:superm:Yvc} into inequality~\eqref{eq:union:Gm}, 
noting~$|V(H)| = n \le m$, $|E(\cH)| \le n^\kr \le m^\kr$ and $q \le m$ 
it then follows that~$\Pr(\neg\cG_{m_0}) \le m^{-\omega(\kr)}$, 
which completes the proof of Theorem~\ref{thm:main:alg}.%  
\noproof

%\smallskip
%
%\begin{remark}\label{rem:Freedman}
%It is sometimes claimed
%% https://arxiv.org/abs/1810.11739
%% https://arxiv.org/abs/1808.02139
%% https://arxiv.org/abs/1502.01644
%% https://arxiv.org/abs/1308.3732
%% https://arxiv.org/abs/1302.5963
%% https://arxiv.org/abs/1209.6570
%that Freedman's martingale inequality carries over to supermartingales~$(S_i)_{i \ge 0}$ 
%in the sense that (i)~${\max_{i \ge 0}\D S_i \le C}$ and (ii)~${\sum_{i \ge 0} \Var(\D S_i \mid \cF_{i}) \le V}$ ensure 
%\begin{equation}
%\label{eq:freedman}
%\Pr\bigpar{S_i \ge S_0 + z \text{ for some $i \ge 0$}} \le \exp \biggpar{-\frac{z^2}{2(V+Cz)}} ,
%\end{equation}
%but the details seem intransparent. 
%%
%For supermartingales~$(S_i)_{i \ge 0}$, Freedman's proof certainly gives~\eqref{eq:freedman} 
%when~(ii) is replaced by ${\sum_{i \ge 0} \E((\D S_i)^2 \mid \cF_{i}) \le V}$ (see e.g.,~\cite[Remark~10]{warnke2016method}), 
%and the proof of Lemma~\ref{lem:superm} 
%shows that~\eqref{eq:freedman} also holds 
%when~(i) is replaced by either ${\max_{i \ge 0}|\D S_i| \le C}$ or ${\max_{i \ge 0}(\D S_i - \E(\D S_i \mid \cF_i)) \le C}$. 
%%
%Of~course, these modified assumptions usually hold in the relevant applications
%(which often rely on the estimate ${\Var(\D S_i \mid \cF_{i})} \le  {\E((\D S_i)^2 \mid \cF_{i})}$, or in fact establish~${\max_{i \ge 0}|\D S_i| \le C}$).    
%\end{remark}

\section{Concluding remarks}\label{sec:concl} 
%
%It would be interesting to find further applications of our new Pippenger--Spencer type 
%coloring result for random hypergraphs with large edges (\refT{thm:ps:random} and \refC{c:ps:random}). 
The main remaining open problem is to determine the typical asymptotic behavior 
of the Prague dimension~${\fpra(\Gnp) \approx \ccc(\Gnpp{1-p})}$ as well as the 
clique covering and partition numbers~$\ccn(\Gnp)$ and~$\cpn(\Gnp)$, 
i.e., to refine the estimates from Theorems~\ref{thm:prague}, \ref{thm:packing} and~\ref{thm:thickCI}. 
Here edge-probability~$p=1/2$ is of special interest, since this would reveal the asymptotics 
of these intriguing parameters for almost all $n$-vertex~graphs. 
\begin{problem}\label{prm:open}
Determine the whp asymptotics of the 
parameters~$\ccn(\Gnp)$, $\cpn(\Gnp)$, $\cct(\Gnp)$, and~$\ccc(\Gnp)$ 
for constant edge-probabilities~$p \in (0,1)$. 
\end{problem}

\subsection{Non-trivial lower bounds for dense random graphs}\label{subsec:lowerdense}
For constant edge-probabilities~${p \in (0,1)}$ our understanding of the asymptotics remains unsatisfactory, even on a heuristic level. 
Indeed, it is well-known that the largest clique of~$\Gnp$ whp has size ${s \sim 2\log_{1/p} n}$, 
which together with the simple lower bound reasoning for
\refT{thm:packing} makes it tempting to speculate that perhaps 
${\ccn(\Gnp) \sim \binom{n}{2}p/\binom{s}{2}}$ holds~whp. 
However, \refL{lemma:lowerbounds} shows that this natural guess is false, 
by further improving the simple lower bound (which for~$p=1/2$ was already noted in~\cite{BESW1993}). 
The analogous speculation ${\cct(\Gnp) \sim np/(s-1)}$ is also refuted by \refL{lemma:lowerbounds}, 
whose proof we defer to~\refApp{apx:lower}. 
Perhaps rashly, we speculate that the lower bounds in~\eqref{eq:ccn:lb}--\eqref{eq:cct:lb} 
could perhaps be asymptotically best possible. 
\begin{lemma}\label{lemma:lowerbounds}
If~$p=p(n)$ satisfies~$n^{-o(1)} \le p \le 1-n^{-o(1)}$, 
then for any $\eps \in (0,1)$ whp  
\begin{align}
\label{eq:ccn:lb}
\ccn(\Gnp) \; &\ge \; (1-\eps) \cdot \bigpar{ 1+\varphi(p)} \tbinom{n}{2}p/\tbinom{s}{2} ,\\
\label{eq:cct:lb}
\cct(\Gnp) \; &\ge \; (1-\eps) \cdot \bigpar{ 1+\varphi(p)} np/(s-1) ,
\end{align}
where~$s:=\ceil{2\log_{1/p} n}$ and~$\varphi(p):={(1-p)\log(1-p)/(p\log p)}$.  
The function $\varphi:(0,1) \to (0,\infty)$ is increasing, with 
${\lim_{p \searrow 0}\varphi(p)=0}$, ${\varphi(1/2)=1}$, and ${\lim_{p  \nearrow 1}\varphi(p)=\infty}$. 
\end{lemma}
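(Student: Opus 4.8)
The plan is to establish~\eqref{eq:ccn:lb} and~\eqref{eq:cct:lb} via a first-moment / deletion argument that keeps track of \emph{both} edges and non-edges, together with standard clique-size concentration; the extra factor $1+\varphi(p)$ will come precisely from accounting for the non-edges ``wasted'' inside each clique used in a covering. First I would recall the well-known fact that whp the clique number of $\Gnp$ satisfies $\omega(\Gnp) \le s = \ceil{2\log_{1/p}n}$, so that every clique appearing in any clique covering of $\Gnp$ has at most $s$ vertices; moreover whp $e(\Gnp) = (1+o(1))\binom{n}{2}p$ and $\Delta(\Gnp) = (1+o(1))np$, which already gives the trivial bounds. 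The point of the improvement is that a clique $K$ with $\abs{K}=\ell \le s$ covers $\binom{\ell}{2}$ edges of $\Gnp$ but also ``uses up'' $\binom{\ell}{2}$ slots, and the key observation is that most $\ell$-subsets of $V$ are \emph{not} cliques in $\Gnp$, so having many large cliques in a small region is impossible; quantitatively, the number of edges of $\Gnp$ inside a vertex set $W$ of size $w$ concentrates around $\binom{w}{2}p$, which limits how efficiently cliques can tile a neighbourhood or the whole graph.

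The cleanest route is probably the following ``weighted counting'' version. For~\eqref{eq:cct:lb}: fix a vertex $v$ and a clique covering $\cC$; let $\cC^{(v)}$ be the cliques through $v$, and let $t_\ell$ be the number of those of size exactly $\ell$. On one hand $\sum_\ell (\ell-1) t_\ell \ge \deg_{\Gnp}(v) = (1+o(1))np$ since the cliques through $v$ must cover all $np$ edges at $v$; on the other hand, the \emph{union} $W$ of all cliques through $v$ has the property that $\Gnp[W]$ contains $\sum_\ell \binom{\ell}{2} t_\ell$ edges counted-with-multiplicity but actually at most $e(\Gnp[W]) \le (1+o(1))\binom{\abs{W}}{2}p$ distinct edges. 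The mismatch forces $\Delta(\cC)\ge \sum_\ell t_\ell$ to be large: using $\ell \le s$ and the convexity/monotonicity of $\ell \mapsto \binom{\ell}{2}/(\ell-1) = \ell/2$, one optimizes the worst case and the non-edge density $1-p$ enters through the bound on $\abs{W}$, producing exactly the factor $1+\varphi(p)$ after substituting $s = 2\log_{1/p}n$. For~\eqref{eq:ccn:lb} one runs the identical argument globally rather than locally around $v$: $\sum_\ell \binom{\ell}{2} t_\ell \ge e(\Gnp) \sim \binom{n}{2}p$ where now $t_\ell$ counts all size-$\ell$ cliques in the covering, and one bounds $\abs{\cC} = \sum_\ell t_\ell$ from below by exploiting that a clique of size $\ell$ forces $\binom{\ell}{2}(1-p)/p$ ``phantom'' non-edges on average among the $\binom{\ell}{2}/p$ pairs it would need if the graph were complete in that region --- made rigorous by a concentration statement on the number of edges spanned by any union of the covering cliques.

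The easy closing steps are the deterministic function-analysis claims about $\varphi$: $\varphi(p) = (1-p)\log(1-p)/(p\log p)$ is manifestly positive on $(0,1)$, and monotonicity plus the three limits ($\varphi(0^+)=0$, $\varphi(1/2)=1$, $\varphi(1^-)=+\infty$) follow from elementary calculus --- e.g.\ writing $\varphi(p) = h(1-p)/h(p)$ with $h(x) := x\log x < 0$, so $\varphi = h(1-p)/h(p)$, and checking the sign of the derivative, or just noting $\varphi(p)\varphi(1-p)=1$ which immediately yields $\varphi(1/2)=1$ and pairs the two endpoint limits. I would also remark why $p \le 1-\gamma$ (equivalently $\varphi(p)$ bounded away from the degenerate regime) is what makes the improvement ``visible'' in Theorems~\ref{thm:packing}--\ref{thm:thickCI}: as $p\to 1$, $\varphi(p)\to\infty$ so the true lower bound outgrows $n^2p/(\log_{1/p}n)^2$ by an unbounded factor, which is exactly the claimed necessity of the hypothesis.

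The main obstacle I anticipate is the concentration step: I need that \emph{simultaneously over all} vertex subsets $W$ (of all relevant sizes, which here can be as large as $\Theta(n)$), the edge count $e(\Gnp[W])$ does not exceed $(1+o(1))\binom{\abs{W}}{2}p$ --- or at least a version of this that survives a union bound over the exponentially many subsets. For $\abs{W}$ linear in $n$ a Chernoff bound with deviation $o(\binom{w}{2}p)$ easily beats $2^n$; the delicate range is small $\abs{W}$, where one instead wants a statement like ``no set of $w$ vertices spans more than $\binom{w}{2}p + O(w\log n)$ edges,'' which again follows from Chernoff plus a union bound over $\binom{n}{w}$ sets provided $w p \gg \log n /$ something --- and the regime $p = n^{-o(1)}$ in the hypothesis is exactly what keeps this comfortable. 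Packaging this uniform control cleanly, and then feeding it into the optimization that extracts the constant $1+\varphi(p)$ rather than a weaker constant, is where the real work lies; everything else is bookkeeping with the clique-number bound $\omega \le s$.
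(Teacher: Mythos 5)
Your approach is genuinely different from the paper's, and I do not believe it can be made to work. The paper's proof (Appendix~A) is a first-moment / entropy argument: for any fixed spanning subgraph $G\subseteq K_n$ with $e(G)=(1\pm o(1))\binom{n}{2}p$, the probability that $\Gnp=G$ is at most
\[
\Pi \;=\; \max_{m\in(1\pm o(1))\binom{n}{2}p} p^m(1-p)^{\binom{n}{2}-m} \;\le\; \exp\Bigl(-(1-o(1))\,\tbinom{n}{2}p\,(1+\varphi(p))\log(1/p)\Bigr),
\]
and the factor $1+\varphi(p)$ arises precisely from the non-edge contribution $(1-p)^{\binom{n}{2}-m}$, i.e.\ from $H(p)=p\log(1/p)+(1-p)\log\bigl(1/(1-p)\bigr)=(1+\varphi(p))p\log(1/p)$. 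Since every clique covering of size at most $T$ with cliques of size at most $s$ uniquely determines the underlying graph, and there are only $o(n^{sT})$ such coverings, a union bound forces $T>(1-\eps)(1+\varphi(p))\binom{n}{2}p/\binom{s}{2}$ whp; the thickness bound is the same idea with coverings encoded via the vertex--clique incidence bipartite graph, giving at most $O((6n)^{nT})$ coverings of thickness $\le T$.

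The structural problem with your weighted-counting route is that the cliques in any covering of $\Gnp$ are actual cliques of $\Gnp$ and hence contain no non-edges, so the quantity $1-p$ never rigorously enters the count. The two inequalities you have available --- $\sum_\ell\binom{\ell}{2}t_\ell\ge e(\Gnp)$ and $\sum_\ell(\ell-1)t_\ell\ge\deg(v)$, combined with $\ell\le s$ --- only recover the trivial bounds $\binom{n}{2}p/\binom{s}{2}$ and $np/(s-1)$. Your proposed concentration statement, that a union $W$ of covering cliques spans at most $(1+o(1))\binom{|W|}{2}p$ edges, is an \emph{upper} bound on the number of distinct edges inside $W$ and hence only limits overlap between cliques; it can only weaken, not amplify, the lower bound obtained from $\sum_\ell\binom{\ell}{2}t_\ell\ge e(\Gnp)$. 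The informal ``phantom non-edges'' remark is where you are implicitly reaching for the entropy of the missing edges, but without the counting-of-descriptions framework above there is no inequality in which they appear. That framework --- a short clique covering is a short description of the graph, while a typical $\Gnp$ requires $(1-o(1))\binom{n}{2}H(p)$ nats to describe --- is the missing idea.
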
 
%\begin{remark}
% In fact, Lemma~\ref{lemma:lowerbounds} remains valid with~${k:=\ceil{2\log_{1/p}(n(1-p))}}$ after replacing $1-p \ge n^{-o(1)}$ by the weaker assumption~$n(1-p) \ge \omega(1)$.
%\end{remark}

\subsection{Asymptotics for sparse random graphs}\label{subsec:asymptoticsmallp}
We now record strengthenings of Theorems~\ref{thm:packing}--\ref{thm:thickCI} 
for many small edge-probabilities~${p=p(n) \to 0}$,  
where the asymptotics follow from Pippenger--Spencer type hypergraph results.  
As we shall see, here the crux is that when all cliques have size~$O(1)$, 
then it suffices to simply cover a~$1-o(1)$ fraction of the relevant~edges. 
%
%Note that in Theorems~\ref{thm:packing:const}--\ref{thm:index:const} below 
%we have~$s \sim 2\log_{1/p}n$ as~$s \to \infty$, 
%which is consistent with~\refL{lemma:lowerbounds}. %above. 
%
\begin{theorem}\label{thm:packing:const}
If~$p=p(n)$ satisfies~$n^{-2/(s+1)} \ll p \ll n^{-2/(s+2)}$ for some fixed integer~$s \ge 3$, 
then $\ccn(\Gnp)$ and $\cpn(\Gnp)$ are whp both asymptotic to~$\binom{n}{2}p/\binom{s}{2}$. 
\end{theorem}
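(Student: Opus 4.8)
The plan is to sandwich both parameters between $(1\pm o(1))\binom{n}{2}p/\binom{s}{2}$, using $\ccn(\Gnp)\le\cpn(\Gnp)$ throughout: the lower bound I would establish for $\ccn(\Gnp)$ and the matching upper bound for $\cpn(\Gnp)$. Write $m:=|E(\Gnp)|$, so that whp $m\sim\binom{n}{2}p$.

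\textbf{Lower bound.} The key point is that in the given range essentially no edge of $\Gnp$ lies in a clique of size larger than $s$. Since $\binom{s+1}{2}-1=(s-1)(s+2)/2$, the expected number of (not necessarily maximal) copies of $K_{s+1}$ in $\Gnp$ satisfies
\begin{equation*}
\frac{\binom{n}{s+1}p^{\binom{s+1}{2}}}{\binom{n}{2}p}\;=\;\Theta\bigpar{n^{s-1}p^{(s-1)(s+2)/2}}\;=\;\Theta\bigpar{(n^2p^{s+2})^{(s-1)/2}}\;=\;o(1),
\end{equation*}
because $p\ll n^{-2/(s+2)}$ forces $n^2p^{s+2}=o(1)$. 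Hence $\E\bigsqpar{\binom{s+1}{2}\cdot|\{K_{s+1}\subseteq\Gnp\}|}=o(m)$, and Markov's inequality gives that whp the number $W$ of edges of $\Gnp$ contained in some clique of $\Gnp$ of size $\ge s+1$ satisfies $W=o(m)$ (each such edge, together with $s-1$ further vertices of the clique, spans a copy of $K_{s+1}$). Fix any clique covering $\cC$ of $\Gnp$: its members of size $\ge s+1$ cover at most $W=o(m)$ edges, while each member of size $\le s$ covers at most $\binom{s}{2}$ edges, so $m\le o(m)+\binom{s}{2}|\cC|$. Thus whp $\ccn(\Gnp)\ge(1-o(1))\binom{n}{2}p/\binom{s}{2}$.

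\textbf{Upper bound.} I would build a clique partition of $\Gnp$ out of copies of $K_s$ plus single edges. Form the $\binom{s}{2}$-uniform auxiliary hypergraph $\cH$ on vertex set $E(\Gnp)$ whose hyperedges are the edge-sets of the copies of $K_s$ in $\Gnp$; a matching in $\cH$ is exactly a family of pairwise edge-disjoint $K_s$'s in $\Gnp$. Since $\binom{s}{2}=O(1)$, it suffices to show that, after deleting a set of $o(m)$ atypical vertices, $\cH$ becomes near-regular with small codegrees, so that a Pippenger--Spencer / R\"odl-nibble type near-perfect matching theorem for hypergraphs of bounded uniformity produces a matching $\cM$ covering all but $o(m)$ of its vertices, with $|\cM|\le m/\binom{s}{2}$. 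Here the common expected degree is $D:=\binom{n-2}{s-2}p^{\binom{s}{2}-1}=\Theta\bigpar{(np^{(s+1)/2})^{s-2}}=\omega(1)$, using that $p\gg n^{-2/(s+1)}$ makes $np^{(s+1)/2}=\omega(1)$. The $\cH$-degree of an edge $e$ is the number of $K_s$'s of $\Gnp$ through $e$; since $p$ lies strictly above the appearance threshold of every subgraph of $K_s$ containing a prescribed edge, this count has mean $D$ with exponentially small relative fluctuations, so by standard subgraph-count concentration in $\Gnp$ all but $o(m)$ edges are \emph{good}, i.e.\ have $\cH$-degree $(1\pm o(1))D$; similarly codegrees of pairs of edges have mean $o(D)$ with exponentially small upper tails, so after moving a further $o(m)$ edges into the bad set every good edge has all codegrees $o(D)$ and lies in only $o(D)$ copies of $K_s$ meeting a bad edge. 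Deleting the $o(m)$ bad vertices from $\cH$ yields the required subhypergraph, and taking $\cP:=\cM\cup\{\{e\}:e\in E(\Gnp)\text{ not covered by }\cM\}$ gives a clique partition of $\Gnp$ with $|\cP|\le m/\binom{s}{2}+o(m)=(1+o(1))\binom{n}{2}p/\binom{s}{2}$ whp. Hence $\cpn(\Gnp)\le(1+o(1))\binom{n}{2}p/\binom{s}{2}$, and combined with the lower bound and $\ccn\le\cpn$ both parameters are asymptotic to $\binom{n}{2}p/\binom{s}{2}$.

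\textbf{Main obstacle.} The delicate part is the upper bound near the lower end of the range, where $D$ can be only polylogarithmic in $n$: then one cannot union-bound degree concentration over all $\binom{n}{2}$ edges, and the $K_s$-hypergraph need not be near-regular. The argument must instead be organized around discarding an $o(m)$-sized set of atypical edges and running the nibble only on the leftover, and one must verify that this removal does not itself spoil near-regularity. This works precisely because the tails of the relevant subgraph counts in $\Gnp$ are exponentially small in a polylogarithmic quantity, which comfortably beats the polylogarithmic losses in the bookkeeping; one also uses crucially that every atypical edge must in particular be an edge of $\Gnp$, contributing the extra factor $p$ that makes the expected number of atypical edges $o(m)$ rather than only $o\bigpar{\binom{n}{2}}$. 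The concentration inputs themselves (Janson's inequality, or the Kim--Vu polynomial concentration inequality) are classical and require only the routine check that $p$ sits above the threshold of each relevant sub-configuration.
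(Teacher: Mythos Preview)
Your argument is essentially the paper's: both prove the lower bound for $\ccn$ by showing that only $o(m)$ edges lie in a copy of $K_{s+1}$, and both prove the upper bound for $\cpn$ by passing to the $\binom{s}{2}$-uniform auxiliary hypergraph $\cH$ on $E(\Gnp)$ whose edges are the $K_s$'s, extracting a near-perfect matching~$\cM$, and adding the $o(m)$ uncovered edges one by one.

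The only substantive difference is how the near-perfect matching is produced near the bottom of the range, where $D=\Theta\bigl((np^{(s+1)/2})^{s-2}\bigr)$ may be only polylogarithmic and one cannot union-bound degree concentration over all edges (you correctly identify this as the main obstacle). You propose to restore near-regularity by a two-round trimming: first discard the $o(m)$ edges with atypical $\cH$-degree or codegree, then discard the further $o(m)$ good edges that meet too many $K_s$'s through bad edges, and finally apply the classical Pippenger/Frankl--R\"odl nibble to the leftover. The paper instead sidesteps the regularity issue entirely by invoking Kahn's \emph{fractional} version of Pippenger's theorem (following Kahn--Park for $s=3$): that result needs only a large fractional matching together with small codegrees, and assigning each $K_s$ weight roughly $1/D$ already gives a fractional matching of size $(1-o(1))m/\binom{s}{2}$ without any cleaning. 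Your route is workable but requires more bookkeeping; the paper's is shorter once Kahn's theorem is taken off the shelf.
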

%\begin{remark}
%For~$n^{-2} \ll p \ll n^{-1/2}$ it is easy to see that $\ccn(\Gnp)$ and $\cpn(\Gnp)$ are whp both asymptotic to~$\binom{n}{2}p$. 
%\end{remark}
%
We leave it as an open problem to determine the whp asymptotics for~$p=\Theta(n^{-2/(s+1)})$, 
and now outline the proof of \refT{thm:packing:const}, which uses~$\ccn(\Gnp) \le \cpn(\Gnp)$. 
The lower bound on $\ccn(\Gnp)$ is routine: 
the expected number of edges in cliques of size at least~$s+1$ is at most 
${\sum_{k \ge s+1}\binom{k}{2}\binom{n}{k}p^{\binom{k}{2}}} \ll {\binom{n}{2}p}$, % for~$p \ll n^{-2/(s+2)}$, 
which makes it easy to see that whp~$\ccn(\Gnp) \ge (1-o(1)) \binom{n}{2}p/\binom{s}{2}$. % for~$n^{-2} \ll p \ll n^{-2/(s+2)}$. 
For the upper bound on~$\cpn(\Gnp)$ we shall mimic the natural strategy of Kahn and Park~\cite{kahn2020tuza} for~$s=3$: 
using Kahn's fractional version of Pippenger's hypergraph packing result~\cite[Theorem~7.1]{kahn2020tuza}  
it is not difficult\footnote{We consider the auxiliary hypergraph~$\cH$,  
where the vertices correspond to the edges of~$\Gnp$ and the edges correspond to the edge-sets of the cliques~$K_s$ of~$\Gnp$. 
The technical conditions of~\cite[Theorem~7.1]{kahn2020tuza} 
required for mimicking~\cite[Section~7]{kahn2020tuza} can then be verified 
using (careful applications of) standard tail bounds such as~\refL{lem:UT} and~\cite[Theorems~30~and~32]{warnke2020missing}.} 
to see that~$\Gnp$ whp contains a collection~$\cC$ of~$|\cC|=(1-o(1))\binom{n}{2}p/\binom{s}{2}$ edge-disjoint cliques~$K_s$. 
Writing~$\cU$ for the edges of~$\Gnp$ not covered by the cliques in~$\cC$, 
it then easily follows that whp~$\cpn(\Gnp)  \le |\cC| + |\cU| \le (1+o(1))\binom{n}{2}p/\binom{s}{2}$, 
as~desired.

\begin{theorem}\label{thm:index:const}
If~$p=p(n)$ satisfies~$(\log n)^{\omega(1)}n^{-2/(s+1)}\le p \ll n^{-2/(s+2)}$ for some fixed integer~$s \ge 3$, 
then $\cct(\Gnp)$ and $\ccc(\Gnp)$ are whp both asymptotic to~$np/(s-1)$. 
\end{theorem}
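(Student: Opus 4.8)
The plan is to mirror the proof strategy of Theorem~\ref{thm:packing:const}, but now tracking maximum degree and chromatic index instead of just the number of cliques. First I would establish the lower bound, which is routine: it is well-known that $\Gnp$ whp has maximum degree $\Delta(\Gnp) = (1+o(1))np$ and, in the stated range $(\log n)^{\omega(1)}n^{-2/(s+1)} \le p \ll n^{-2/(s+2)}$, whp no clique on $s+1$ vertices, so every clique covering uses only cliques of size at most $s$. Hence the $\le \binom{s}{2}$ edges at any vertex $v$ lying in a given clique contribute at most $s-1$ to the thickness per clique through $v$, giving $\cct(\Gnp) \ge \Delta/(s-1) = (1-o(1))np/(s-1)$, and $\cct \le \ccc$ is immediate.

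For the upper bound I would again invoke Kahn's fractional version of Pippenger's packing theorem (\cite[Theorem~7.1]{kahn2020tuza}) to obtain, whp, a collection $\cC$ of edge-disjoint copies of $K_s$ in $\Gnp$ covering all but $o(n^2 p)$ of the edges; crucially, one should extract from the same argument that $\cC$ is \emph{nearly $\Delta/\binom{s-1}{1}$-regular as a hypergraph on $V(\Gnp)$}, i.e.\ $\Delta(\cC) \le (1+o(1)) np/(s-1)$, with small codegrees, which follows because a typical vertex of degree $(1+o(1))np$ lies in $(1+o(1))np/(s-1)$ of the packed $K_s$'s and the fractional relaxation can be taken to respect these local degree constraints (this is the place where the slightly stronger hypothesis $p \ge (\log n)^{\omega(1)} n^{-2/(s+1)}$, rather than just $p \gg n^{-2/(s+1)}$, is used, to make the concentration and pseudo-randomness inputs go through). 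Since the edges $K_s$ have \emph{constant} size $\binom{s}{2}$, the classical Pippenger--Spencer theorem \cite{PS1989} applies directly to the hypergraph $\cC$ (no need for our Theorem~\ref{thm:ps:random} here), yielding a proper edge-coloring of $\cC$ with $(1+o(1))\Delta(\cC) \le (1+o(1))np/(s-1)$ colors. Writing $\cU$ for the uncovered edges of $\Gnp$, we have whp $|\cU| = o(n^2 p)$ and, more to the point, $\Delta(\cU) = o(np)$ — again because $\cU$ is a pseudo-random sparse residual graph — so by Vizing's theorem $\cU$ can be properly colored with $o(np)$ further colors using a disjoint palette. Combining, the clique covering $\cC \cup \cU$ of $\Gnp$ (whose cliques all have size $\le s$, hence thickness $\le s-1$ times the hypergraph degree) satisfies $\ccc(\Gnp) \le \cct(\Gnp) \le (1+o(1))np/(s-1) + o(np) = (1+o(1))np/(s-1)$, matching the lower bound.

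The main obstacle, as in \cite{kahn2020tuza}, is verifying the technical hypotheses of the fractional Pippenger packing theorem for the auxiliary hypergraph $\cH$ whose vertices are the edges of $\Gnp$ and whose hyperedges are the edge-sets of the $K_s$'s — in particular controlling codegrees and ensuring near-regularity after the fractional rounding. The extra twist beyond \cite{kahn2020tuza} (which only needed a packing and hence only $p \gg n^{-2/(s+1)}$) is that we additionally need the resulting clique collection to be \emph{near-regular at every vertex of $V(\Gnp)$}, not merely to have the right total size; this is what forces the stronger lower bound on $p$ and requires (careful applications of) standard tail bounds such as \refL{lem:UT} and the concentration estimates of~\cite{warnke2020missing} to establish that the local degrees of the packed cliques track their expectations uniformly over all vertices. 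Once near-regularity with small codegrees is in hand, the coloring step is entirely classical.
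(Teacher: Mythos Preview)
Your overall architecture matches the paper's: pack edge-disjoint $K_s$'s, color the packing with Pippenger--Spencer, and color the leftover edges with Vizing. But there are two genuine issues.

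\textbf{Lower bound.} Your claim that in this range $\Gnp$ whp contains no $K_{s+1}$ is false: already at $p \ge n^{-2/(s+1)}$ the expected number of $K_{s+1}$'s is $\binom{n}{s+1}p^{\binom{s+1}{2}} \ge n^{s+1}\cdot n^{-s}=n\to\infty$, so many $K_{s+1}$'s are present (and a clique covering is allowed to use them). The paper instead argues locally: for any fixed vertex~$v$, the expected number of edges through~$v$ lying in some clique of size $\ge s+1$ is at most $\sum_{k\ge s+1}\binom{k}{2}\binom{n-1}{k-1}p^{\binom{k}{2}}\ll np$ under $p\ll n^{-2/(s+2)}$, so whp almost all of the $(1+o(1))np$ edges at~$v$ must be covered by cliques of size $\le s$, yielding $\cct(\Gnp)\ge (1-o(1))np/(s-1)$.

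\textbf{Upper bound: the packing tool.} You correctly isolate the crux---one needs a packing $\cC$ of $K_s$'s that is near-regular at every vertex of $V(\Gnp)$, not merely one that covers a $1-o(1)$ fraction of the edges globally---but Kahn's fractional Pippenger theorem~\cite[Theorem~7.1]{kahn2020tuza} does not deliver this. It only guarantees a near-perfect matching in the auxiliary hypergraph~$\cH$, i.e., $|\cU|=o(n^2p)$; it says nothing about how the uncovered edges are distributed, so your assertion $\Delta(\cU)=o(np)$ ``because $\cU$ is a pseudo-random sparse residual graph'' is exactly the missing step. The concentration inputs you cite (\refL{lem:UT}, \cite{warnke2020missing}) control statistics of~$\Gnp$ itself, not of the output of a nibble, so they do not bridge this gap on their own. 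The paper instead invokes the pseudo-random matching result of Ehard, Glock and Joos~\cite[Theorem~1.2]{ehard2019pseudorandom}, which is designed precisely for this: it produces a matching that is well-spread with respect to any prescribed family of weight functions, and taking the weights ``cliques through~$v$'' for each $v\in[n]$ yields directly that every vertex lies in $(1-o(1))np/(s-1)$ cliques of~$\cC$. The stronger hypothesis $p\ge (\log n)^{\omega(1)}n^{-2/(s+1)}$ enters when verifying their polynomial-in-$\Delta$ codegree and weight conditions. Once this near-regularity is in hand, the Pippenger--Spencer and Vizing steps are exactly as you describe.

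(Also, in your final sentence the inequality $\ccc(\Gnp)\le\cct(\Gnp)$ is reversed.)
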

\begin{remark}
These asymptotics remain valid when the definitions of $\cct(\Gnp)$ and $\ccc(\Gnp)$ are restricted to clique partitions of the edges (instead of clique coverings). 
\end{remark}
%\begin{remark}
%For~$n^{-1}\log n \ll p \ll n^{-1/2}$ it is easy to see that  $\cct(\Gnp)$ and $\ccc(\Gnp)$ are whp both asymptotic to~$np$. 
%\end{remark}
%
We leave it as an open problem to determine the whp asymptotics for~$p=(\log n)^{O(1)}n^{-2/(s+1)} $,  
and now outline the proof of \refT{thm:index:const}, which uses~$\cct(\Gnp)\le \ccc(\Gnp)$. 
The lower bound on $\cct(\Gnp)$ is routine: 
the expected number of edges in cliques of size at least~$s+1$ containing a fixed vertex~$v$ is at most 
${\sum_{k \ge s+1}\binom{k}{2}\binom{n-1}{k-1}p^{\binom{k}{2}}} \ll {np}$, % for~$p \ll n^{-2/(s+2)}$,
which makes it easy to see that whp~$\cct(\Gnp) \ge (1-o(1)) np/(s-1)$. % for~$n^{-1} \ll p \ll n^{-2/(s+2)}$.  
Turning to the upper bound on~$\ccc(\Gnp)$, 
using a pseudo-random variant of Pippenger's packing result due to Ehard, Glock and Joos~\cite{ehard2019pseudorandom}, 
it is not difficult\footnote{For the same auxiliary hypergraph~$\cH$ as considered before, 
the required technical conditions of~\cite[Theorem~1.2]{ehard2019pseudorandom}  with $\Delta \approx \binom{n-2}{s-2}p^{\binom{s}{2}-1} \ge \Omega\bigpar{(\log n)^{\omega(1)}}$ and~$\log e(\cH) \le s \log n \ll \Delta^{\Theta(1)}$ can be verified using \refL{lem:UT} and~\cite[Theorem~1]{vsileikis2019counting}.}
to see that~$\Gnp$ whp contains a collection~$\cC$ of edge-disjoint cliques~$K_s$ 
where each vertex is contained in~$(1-o(1))np/(s-1)$ cliques of~$\cC$. 
Writing~$\cU$ for the edges of~$\Gnp$ not covered by the cliques in~$\cC$, 
using Pippenger and Spencer's chromatic index result~\cite{PS1989} and Vizing's theorem 
it then is not difficult to see that whp~$\ccc(\Gnp) \le \chi'(\cC) + \chi'(\cU) \le (1+o(1)) n p/(s-1)$, 
as~desired.

\small
\bibliographystyle{plain}

\normalsize

\appendix

\section{Lower bounds: proof of \refL{lemma:lowerbounds}}\label{apx:lower}
%
%In this appendix we give the deferred proof of \refL{lemma:lowerbounds} from~\refS{subsec:lowerdense}. 
%
\begin{proof}[Proof of \refL{lemma:lowerbounds}]%
Writing~$\cS$ for the event that the largest clique of~$\Gnp$ has size at most~$s=\ceil{2\log_{1/p} n}$, it well-known that~$\cS$~holds~whp 
(by a straightforward first moment argument). 
Writing~$\cE$ for the event that~$\Gnp$ contains $(1\pm\eps)\binom{n}{2}p$ edges  
for~$\eps :=n^{-1/2}$, say, 
it is easy to see that~$\cE$~holds~whp (using Chebychev's inequality). 
Furthermore, 
recalling $\varphi(p)={(1-p)\log(1-p)/(p\log p)}$, 
the probability that~$\Gnp$ equals any fixed spanning 
subgraph~$G \subseteq K_n$ with~$e(G) = (1\pm\eps)\binom{n}{2}p$ edges
 is routinely seen to be at~most 
\begin{equation}\label{eq:Pi}
\Pi := 
%\max_{G: e(G) \in (1\pm\eps)\binom{n}{2}p} \Pr(\Gnp=G) = 
\max_{m \in (1\pm\eps)\binom{n}{2}p} p^m (1-p)^{\binom{n}{2}-m} 
\; \le \; \exp\Bigpar{-(1-o(1)) \! \cdot \! \tbinom{n}{2}p \bigpar{1+\varphi(p)} \! \cdot \! \log(1/p)} .
\end{equation}

For the clique covering number~$\ccn(\Gnp)$, the crux is that there are at~most 
\[
\binom{n+s}{s}^T 
\: \le \: 
o(n^{sT})
\]
many collections~$\{C_1, \ldots, C_t\}$ with $t \le T$ that are a 
clique covering for some graph~$G \subseteq K_n$ with largest clique of size at most~$s$. 
Hence, since each clique covering uniquely determines the entire edge-set % (as the union of all clique edge-sets), 
and thus the underlying spanning subgraph~$G \subseteq K_n$, 
it follows by a union bound argument~that 
\begin{equation}\label{eq:theta0:lower:dense:pr}
\Pr(\ccn(\Gnp) \le T) \; \le \; \P(\neg\cS \text{ or } \neg \cE) \: + \: o(n^{sT}) \cdot \Pi. 
\end{equation}
Note that~$\P(\neg\cS \text{ or } \neg \cE)=o(1)$ and~$s\log n \sim \binom{s}{2} \cdot \log(1/p)$. 
In view of inequality~\eqref{eq:Pi}, for any~$\eps \in (0,1)$ 
it follows that~\eqref{eq:theta0:lower:dense:pr} is at most~$o(1)$
when~$T \le (1-\eps) \cdot (1+\varphi(p)) \binom{n}{2}p/\binom{s}{2}$, 
establishing~\eqref{eq:ccn:lb}. %as desired. 

Turning to the thickness~$\cct(\Gnp)$, we associate each clique covering~$\cC$ of some graph~$G \subseteq K_n$ 
with an auxiliary bipartite graph~$\cB$ on vertex-set ${[n] \cup \cC}$,  
where~${v \in [n]}$ and~${C_i \in \cC}$ are connected by an edge whenever~${v \in V(C_i)}$.
If the thickness of~$\cC$ is at most~$T$, then in~$\cB$ the degree of each~$v \in [n]$ is at most~$\floor{T}$, 
which also gives~${|\cC| \leq n \floor{T}}$.   
Since the structure of the auxiliary bipartite graph~$\cB$ uniquely determines~$\cC$  
(as the neighbors of~$C_i$ in~$\cB$ determine the clique vertex-set~$V(C_i)$), 
it follows that there are at~most
\[
\binom{n\floor{T} + \floor{T}}{\floor{T}}^n
\: \le \: 
O\bigpar{(6n)^{nT}}
\]
many collections~$\cC$ with thickness at most~$T$ that are a clique covering of some graph~$G \subseteq K_n$. 
Since each such~$\cC$ %clique covering 
uniquely determines the underlying spanning subgraph~$G \subseteq K_n$, 
we obtain similarly to~\eqref{eq:theta0:lower:dense:pr}~that
\begin{equation}\label{eq:theta1:lower:dense:pr}
\Pr(\cct(\Gnp) \le T) \; \le \; \P(\neg \cE) \: + \: O\bigpar{(6n)^{nT}} \cdot \Pi. 
\end{equation}
Note that~$\P(\neg \cE)=o(1)$ and~$n \log (6n) \sim  \binom{n}{2}\log(1/p) \cdot (s-1)/n$. 
In view of inequality~\eqref{eq:Pi}, for any~$\eps \in (0,1)$ 
it follows that~\eqref{eq:theta1:lower:dense:pr} is at most~$o(1)$ 
when~$T \le (1-\eps) \cdot (1+\varphi(p)) np/(s-1)$, completing the proof of~\eqref{eq:cct:lb}. 
\end{proof}

\section{Variant of \refT{thm:ps:random}: proof of \refC{c:ps:random}}\label{apx:lower}
\begin{proof}[Proof of \refC{c:ps:random}]
Choosing~$\xi=\xi(\delta) \in (0,1/16]$ such that $(1+\delta)(1+\xi)/(1-4\xi)^2 \le 1+2\delta$,  
set ${m_0 := \floor{(1+\xi)m}}$, ${m_1 := \floor{m_0/(1-4\xi)^2}}$, and ${c := (1+\delta)r m_1/n}$. 
Let $\cH_{i}^*$ be chosen uniformly at random from all $\binom{|E(\cH)|}{i}$ subhypergraphs of~$\cH$ with exactly~$i$ edges. 
Since $\cH_q$ conditioned on having exactly~$i$ edges has the same distribution as~$\cH_i^*$, 
by the law of total probability and monotonicity it follows~that 
%it routinely follows (e.g., similarly to~\cite[Propositions~1.12--1.13]{JLR}) that 
\begin{equation}\label{eq:cl:ps:random:transfer}
\begin{split}
\Pr(\ci(\cH_q) \ge c) & \: \le \: \Pr(|E(\cH_q)| > m_0) + \sum_{0 \le i \le m_0}\Pr(\ci(\cH_i^*) \ge c)\Pr(|E(\cH_q)| = i) \\
%& \le e^{- \Omega(n^{1+\sigma})} + \Pr(\ci(\cH_{m_0}^*) \ge c) \\
& \: \le \: n^{-\omega(r)} + \Pr(\ci(\cH_{m_0}^*) \ge c), 
\end{split}
\end{equation}
where we used standard Chernoff bounds (such as~\cite[Theorem~2.1]{JLR}) and~$\E |E(\cH_q)| = |E(\cH)|q = m \ge n^{1+\sigma} \gg r \log n$.  
Sequentially choosing the random edges $e_1, \ldots, e_{m_1} \in E(\cH)$ of~$\cH_{m_1}$ as defined in \refT{thm:ps:random}, 
note that $e_{i+1} \in {E(\cH) \setminus \{e_1, \ldots, e_i\}}$ holds with probability at least~${1-m_1/e(\cH) > 1-4\xi}$, 
as~$m_1 < 4 m \le 4\xi e(\cH)$.  
Since we can equivalently construct the edge-set~$\{f_1, \ldots, f_{m_0}\}$ of $\cH^*_{m_0}$ by 
sequentially choosing $f_{i+1} \in {E(\cH) \setminus \{f_1, \ldots, f_i\}}$ uniformly at random, 
a natural coupling of~$\cH_{m_1}$ and~$\cH^*_{m_0}$ thus satisfies 
\begin{equation*}%\label{eq:cl:ps:random:cpl}
\begin{split}
\Pr(\cH_{m_0}^* \subseteq \cH_{m_1}) \: \ge \: \Pr(\Bin(m_1,1-4\xi) \ge m_0) \: \ge \: 1- n^{-\omega(r)} ,
\end{split}
\end{equation*}
where we used standard Chernoff bounds and that~$m_1 (1-4\xi) > m_0/(1-\xi)$ for~$n \ge n_0(\xi)$.  %(and stochastic domination).
Hence 
\begin{align}\label{eq:cl:ps:random:2}
\Pr(\ci(\cH_{m_0}^*) \ge c) \: \le \: \Pr(\ci(\cH_{m_1}) \ge c) + n^{-\omega(r)} \: \le \: n^{-\omega(r)} ,
\end{align}
where we invoked \refT{thm:ps:random} with~$m$ set to $m_1$ 
(which applies since ${n^{1+\sigma} \le m} \le m_1 < {4 \xi e(\cH) < n^r}$).  
This completes the proof by combining~\eqref{eq:cl:ps:random:transfer} and~\eqref{eq:cl:ps:random:2} with~$c \le (1+2\delta)rm/n$. 
\end{proof}

\section{Heuristics: random greedy edge coloring algorithm}\label{sec:pseudo:int}
In this appendix we give, for the greedy coloring algorithm from \refS{sec:edgechr}, 
two heuristic explanations for the trajectories~$|Q_{e}(i)| \approx \hq(t)$ and $|Y_{v,c}(i)| \approx \hy(t)$ that these random variables follow, where ${t = t(i,m) = i /m}$. 

For our first \emph{pseudo-random heuristic}, 
we write~$E_i=\{e_1, \ldots, e_i\}$ for the multi-set of edges appearing during the first $i$ steps of the algorithm. 
Ignoring that edges can appear multiple times, 
our pseudo-random ansatz is that 
the edges in~$E_i$ and their assigned colors %(by the algorithm)  
are approximately independent~with 
%of the algorithm are approximately uniform and independent with 
\begin{equation*}%\label{def:pi}
\Pr\bigpar{\text{$e$ in $E_i$ and colored~$c$}} \approx \frac{|E_i|}{|E(\cH)|} \cdot \frac{1}{q} \approx \frac{i}{nD/\kr} \cdot \frac{1}{\kr m/n}
= \frac{t}{D} =: p(t,D) = p ,
\end{equation*}
where independence only holds with respect to colorings that are proper, i.e., possible in the algorithm. 
Using this heuristic ansatz, we now consider the event~$\cE_{v,c}$ 
that no edge~$f \in E_i$ with~$v \in f$ is colored~$c$. 
Exploiting that no two distinct edges containing~$v$ can receive the same color in the algorithm (since this coloring would not be proper), 
our pseudo-random ansatz and the degree assumption~\eqref{as:degcodeg} then suggests that 
\begin{equation*}%\label{eq:Avc}
\Pr(\neg\cE_{v,c}) = \sum_{f \in E(\cH): v \in f} \hspace{-0.25em} \Pr(\text{$f$ in $E_i$ and colored~$c$}) \approx D \cdot p = t .
\end{equation*}
Since for every pair of vertices there are only at most~$n^{-\sigma}D$ edges containing both (by the codegree assumption), 
for~$\ell= o(\log n)$ distinct vertices~$v_{1},\ldots,v_{\ell}$ 
our pseudo-random ansatz also loosely suggests that 
\begin{equation*}%\label{eq:Avc}
\Pr\Bigpar{ \bigcap_{i \in [\ell]}\cE_{v_i,c}} 
 \approx \prod_{i \in [\ell]}\Pr(\cE_{v_i,c}) + O\bigpar{\ell^2 \cdot n^{-\sigma}D \cdot p}\approx (1-t)^\ell .
\end{equation*}
Recalling~\eqref{def:QS} from \refS{sec:edgechr}, using linearity of expectation we then anticipate~$|Q_{e}(i)| \approx \hq(t)$ based on  
\begin{equation*}%\label{def:hq}
\E |Q_{e}(i)| 
= \sum_{c \in [q]}\Pr\bigpar{c \in Q_{e}(i)} 
= \sum_{c \in [q]}\Pr\Bigpar{ \bigcap_{v \in e}\cE_{v,c}} 
\approx q \cdot (1-t)^\kr = \hq(t) .
\end{equation*}
Mimicking this reasoning, 
recalling~\eqref{def:Yvc} %from \refS{sec:edgechr} 
we similarly anticipate~$|Y_{v,c}(i)| \approx \hy(t)$ based on
\begin{equation*}%\label{def:hy}
\E |Y_{v,c}(i)| 
= \sum_{f \in E(\cH): v \in f} \hspace{-0.25em} \Pr\bigpar{c \in Q_{f \setminus \{v\}}(i)} 
%= \sum_{f \in E(\cH): v \in f} \hspace{-0.25em} \Pr\Bigpar{ \bigcap_{v \in e \setminus\{v\}}\cE_{v,c}} 
\approx D \cdot (1-t)^{\kr-1} = \hy(t). 
\end{equation*}

In our second \emph{expected one-step changes heuristic} 
we assume for simplicity that there are deterministic approximations
$|Q_{e}(i)| \approx f(t) q$ and $|Y_{v,c}(i)| \approx g(t) D$. 
Using these approximations and~$q \approx \kr m/n$, 
the calculations leading to \eqref{eq:Qe:E:0}--\eqref{eq:Qe:E:1} and \eqref{eq:Yvc:E:0}--\eqref{eq:Yvc:E} 
in Section~\ref{sec:alg:expected} 
then suggest~that 
\begin{align}
\label{eq:Qe:heur:osc}
\E\bigpar{|Q_{e}(i+1)|-|Q_{e}(i)| \: \big| \: \cF_i} & 
\approx - \frac{f(t) q \cdot \kr \cdot g(t)D}{nD/\kr \cdot f(t) q} 
\approx -\frac{\kr g(t)q}{m} , \\
\label{eq:Yvc:heur:osc}
\E\bigpar{|Y_{v,c}(i+1)|-|Y_{v,c}(i)| \: \big| \: \cF_i} & 
\approx - \frac{g(t)D \cdot (\kr-1) \cdot g(t)D}{nD/\kr \cdot f(t) q} 
\approx -\frac{(\kr-1)g^2(t)D}{f(t) m} ,
\end{align}
where $\cF_i$ denotes the natural filtration of the algorithm after $i$ steps.  
Since the left-hand sides of~\eqref{eq:Qe:heur:osc}--\eqref{eq:Yvc:heur:osc} are approximately equal to~$[f(t+1/m)-f(t)]q \approx  f'(t)q/m$ and~$g'(t)D/m$, respectively, 
we anticipate 
\begin{equation}\label{eq:heur:fg}
f'(t) = -\kr g(t) 
\quad \text{ and } \quad
g'(t) = -(\kr-1) g^2(t)/f(t) .
\end{equation}
Noting~$|Q_{e}(0)| = q$ and $|Y_{v,c}(0)| \approx D$, we also anticipate~$f(0)=g(0)=1$.
The solutions ${f(t)=(1-t)^\kr}$ and ${g(t)=(1-t)^{\kr-1}}$ then make 
$|Q_{e}(i)| \approx f(t) q = \hq(t)$ 
and 
$|Y_{v,c}(i)| \approx g(t) D = \hy(t)$
plausible.

\end{document}